\documentclass{amsart}
\usepackage{amsmath,amssymb,amsthm,mathtools,mathdots,nicefrac,tikz,enumitem,url}

\usepackage[a4paper,top=2.5cm,bottom=2cm,left=2cm,right=2cm,marginparwidth=1.75cm]{geometry}
%\usepackage{subfigure}

% (Chan) Packages used for restating theorems
\usepackage{thmtools}
\usepackage{thm-restate}
\usepackage{hyperref}
\usepackage{cleveref}

% (Krish) Adding this for now
\usepackage{algorithm}
\usepackage[noend]{algpseudocode}
\usepackage{float}
\usepackage[caption = false]{subfig}

\theoremstyle{definition} % non-italic theorems
\newtheorem{theorem}{Theorem}[section]
\newtheorem{corollary}{Corollary}[section]
\newtheorem{lemma}{Lemma}[section]
\newtheorem{proposition}{Proposition}[section]

\newtheorem{example}{Example}[section]
\newtheorem{remark}{Remark}[section]
\newtheorem{conjecture}{Conjecture}[section]

\newcommand{\sn}{\mathrm{sn}}

\DeclareMathOperator{\val}{val}
\DeclareMathOperator{\tw}{tw}
\DeclareMathOperator{\gon}{gon}

\DeclareMathOperator{\scw}{scw}

\DeclareMathOperator{\lcm}{lcm}

\usepackage{mathrsfs}

\title{The Gonality of Chess Graphs}
\author{Nila Cibu, Kexin Ding, Steven DiSilvio, Sasha Kononova, Chan Lee, Ralph Morrison, and Krish Singal}
\date{}

\begin{document}

\maketitle

\begin{abstract}
   Chess graphs encode the moves that a particular chess piece can make on an $m\times n$ chessboard.  We study through these graphs through the lens of chip-firing games and graph gonality.  We provide upper and lower bounds for the gonality of king's, bishop's, and knight's graphs, as well as for the toroidal versions of these graphs.  We also prove that among all chess graphs, there exists an upper bound on gonality solely in terms of $\min\{m,n\}$, except for queen's, toroidal queen's, rook's, and toroidal bishop's graphs.
\end{abstract}

\section{Introduction}

%\cassie{add citations in bib}
%\cassie{literary review on Queen/Rook graphs}
%\cassie{why computing gonality is important}

The game of chess has a long history of inspiring problems in math. How many queens (or bishops, or knights) can be placed on a chessboard (not necessarily $8\times 8$) such that no pair of them can attack each other? Similarly, what is the smallest number of queens (or bishops, or knights) that can be placed so that every square on the board may be attacked in one move? Can a knight visit each square on an $m\times n$ chessboard exactly once, returning to its starting position? These questions can be framed and studied in the language of graph theory, and relate to such topics as independence number, domination number, and Hamiltonian cycles.

Choose a chess piece, either king, queen, rook, bishop, or knight\footnote{We omit pawns from consideration, since the movement of a pawn is not symmetric between two squares, so an undirected graph does not suffice to encode their movements,}.  The $m\times n$ chess graph associated to that piece has $mn$ vertices, corresponding to the squares of an $m\times n$ chessboard. Two vertices are connected by an edge if and only if the chess piece in question can move between the two corresponding squares. For example, in a king's graph, each vertex is adjacent to at most eight other vertices, corresponding to two horizontal, two vertical, and four diagonal moves. Toroidal chess graphs are constructed similarly, with moves allowed that glue the boundaries of the chessboard to form a torus.  Illustrated on the left in Figure \ref{figure:4x5_kings_diagram} is a $4\times 5$ chessboard with a king on one of the squares.  The king's moves from that square under traditional chess rules are illustrated with solid arrows; when toroidal moves are allowed, three more moves are possible, illustrated with dashed arrows.  On the right are the $20$ vertices of the $4\times 5$ king's graph; the edges coming from one vertex are illustrated, with dashed edges for those only present in the toroidal king's graph. 

\begin{figure}[hbt]
    \centering
    \includegraphics[scale=1.5]{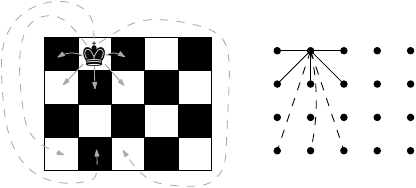}
    \caption{Legal king's moves from one vertex and corresponding edges on $4 \times 5$ king's graph, with dashed edges for the additional edges in the $4\times 5$ toroidal king's graph.}
    \label{figure:4x5_kings_diagram}
\end{figure}

Note that on a toroidal board, a piece may be able to travel between two vertices with multiple routes; for example, on a $2\times n$ chess board, a knight moving two squares spots up and one the right has the same effect as moving two squares down and one to the right. We follow the convention that only one edge connects the two corresponding vertices, although an interesting variant of our problems would be to encode such repeated moves with repeated edges (turning our graph into a multigraph). For some graphs, like rook's graphs, making the board toroidal has no impact at all; for others, such as bishop's graphs, making the board toroidal increases the number of edges and changes the graph structure drastically.

In this paper we study chess graphs through the lens of chip-firing games and graph gonality.  The (divisorial) gonality of a graph, a discrete analog of the gonality of an algebraic curve, is the minimum degree of a positive rank divisor on that graph. Phrased in the language of chip-firing games, the gonality of a graph $G$ is the smallest number of chips that can be placed on $G$ so that a chip can be placed on any vertex through chip-firing moves, without any other vertex being in debt.  This invariant has been of great interest due to its connection to algebraic geometry \cite{baker}, and much work has been done to determine the gonality of different families of graphs.

Several families of chess graphs have already been the subject of study from this perspective. The gonality of rook's graphs was first explored in \cite{aidun2019gonality}, with gonality determined for $m\times n$ graphs with $\min\{m,n\}\leq 5$. This result was generalized in \cite{rooks_gonality}, where gonality was determined for all rook's graphs.  More recently, the gonality of queen's graphs and toroidal queen's graphs was determined in \cite{queens_graph_gonality}.  In this paperwe study the remaining chess graphs, namely the traditional and toroidal versions of king's, bishop's, and knight's graphs arising from $m\times n$ chessboards. The $3\times 4$ non-toroidal versions of these graphs are illustrated in the first row Figure \ref{figure:chess_ensemble}; the second row shows the edges that are gained by the upper left vertex when expanding to the toroidal version. 

\begin{figure}[hbt]
    \centering
\includegraphics{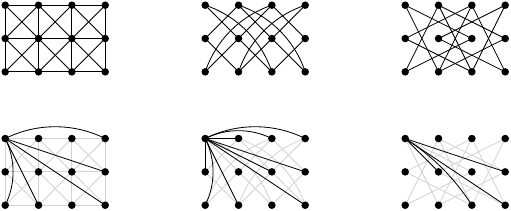}
    \caption{The $3\times 4$ king's, bishop's, and knight's graphs; and the edges gained by the upper left vertex in the toroidal versions of those graphs.}
    \label{figure:chess_ensemble}
\end{figure}

% bounds on $\gon(G)$ are given immediately by the degree of any positive rank divisor on $G$. Finding lower bounds on $\gon(G)$ is more challenging, since to show $\gon(G)\geq k$ one must argue that no divisor of degree $k-1$ has positive rank.  One powerful tool in this problem is scramble number \cite{new_lower_bound}, a graph invariant defined as the maximum order of any scramble on a graph, which satisfies $\sn(G)\leq \gon(G)$.
%We present a better lower bound on gonality, namely the well-studied graph invariant called screewidth. Defined as the minimum width of any tree-cut decomposition on $G$, the screewidth of a graph $G$, denoted $\scw(G)$, was shown by [SOURCE] to be an upper bound on scramble number. Now, we prove that it can also serve as a lower bound on gonality. Thus, the main result of our paper is the following:
%\begin{theorem} %\label{maintheorem}
%    For any graph G, we have $\scw(G) \leq \gon(G)$
%\end{theorem}

%Several families of chess graphs have already been the subject of study. The gonality of rook's graphs was first explored by Aidun and Morrison \cite{aidun2019gonality}. These graphs were further investigated by Speeter and are of great interest as they can be seen as dual graphs of a certain degeneration of complete intersection curves \cite{rooks_gonality}.  Speeter and Morrison generalized those techniques to determine the gonality of queen's graphs and toroidal queen's graphs \cite{queens_graph_gonality}.

We compute gonality exactly for all toroidal bishop's graphs, as well as for king's and toroidal king's graphs where the number of columns is sufficiently high compared to the number of rows. We also find upper and lower bounds on gonality for king's, toroidal king's, bishop's, knight's, and toroidal knight's graphs.

We summarize our results in the following two theorems; in both we assume $m\leq n$.  The first enumerates our contribution to known bounds on gonality.

\begin{theorem}\label{theorem:summary_of_bounds}
    We have the following bounds on the gonality of chess graphs.
    \begin{itemize}
        \item King's graphs: $\gon(K_{m\times n}) \leq 3m-2$, with equality for $n\geq 3m-2$.
        \item Toroidal king's graphs: $\gon(K^t_{m\times n}) \leq 6m$, with equality for $n\geq 6m$.
        \item Bishop's graphs: $\gon(B_{m\times n}) \leq \frac{m^2(m^2-1)}{6}$; and for $n\gg m$ we have $\gon(B_{m\times n}) \geq \frac{1}{3}(m'-1)m'(m'+1)$, where $m'=2\lfloor m/2\rfloor$.
        \item Toroidal bishop's graphs: $\gon(B_{m\times n}^t) = mn-\gcd(m,n)$.
        \item Knight's graphs: $\gon(N_{m\times n}) \leq 10m-12$ for $m\geq 4$, with smaller bounds for $m \leq 3$; and for $m\geq 4$ and $n\gg m$ we have $\gon(N_{m\times n}) \geq 6m-8$.
        \item Toroidal knight's graphs: $\gon(N_{2\times n}^t) \leq 20$ and $\gon(N_{m\times n}^t)\leq 20m$ for $m\geq 3$; and for $m\geq 5$ and $n\gg m$ we have $\gon(N_{m\times n}^t) \geq 12m$.
    \end{itemize}
\end{theorem}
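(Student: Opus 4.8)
Since \Cref{theorem:summary_of_bounds} collects the paper's main estimates, the plan is to prove each bullet in its own section while unifying the arguments around two standard tools: explicit positive-rank divisors for the upper bounds, and treewidth/scramble-number certificates for the lower bounds. Throughout I will use $\tw(G)\le\scw(G)\le\gon(G)$, so every tree-decomposition or scramble I build is automatically a lower bound on gonality. The upper bounds, by contrast, will all come from producing an effective divisor together with a sequence of set-firing moves that keeps the divisor effective and eventually reaches every vertex.

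For king's, toroidal king's, and knight's graphs the upper bounds come from a left-to-right \emph{column sweep}. For the king, place on each vertex $v$ of the first column its forward degree (the number of edges from $v$ to the next column) and then fire the first column, then the first two columns, and so on. Because a king moves only between adjacent columns, the ``forward degree in equals forward degree out'' balance holds at every vertex, so each intermediate divisor is again the forward-degree divisor on the current column and the degree is preserved at the forward cut $3(m-2)+2+2=3m-2$. On the torus the top-to-bottom wrap makes every row contribute $3$ forward edges, and the left-to-right wrap forces the swept region to be a \emph{band} with two boundaries, doubling the cut to $2\cdot 3m=6m$. The knight reaches columns $j\pm1$ and $j\pm2$, so one must maintain a width-two band; the bookkeeping is heavier and the maintenance cost exceeds the bare cut, which is exactly why $10m-12$ and $20m$ are inequalities rather than equalities. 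I expect the small cases ($m\le 3$, and $m=2$ toroidally) to require separate hand arguments.

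For the bishop I would first record the structural fact that two squares are adjacent precisely when they lie on a common diagonal, so each color class is a union of diagonal-cliques. On the ordinary board each diagonal has length at most $m$, so a single-column sweep fails (a cell's diagonal spans up to $m$ columns) and one must instead slide a window of width $\sim m$ across the board; the degree of the associated divisor depends only on $m$, not on $n$, giving the stated $\tfrac{m^2(m^2-1)}{6}$ after summing edge contributions across the window. On the torus the diagonals wrap into genuine cliques: the ``$/$'' and ``$\backslash$'' families each split into $\gcd(m,n)$ cliques of size $\lcm(m,n)$, whence the independence number (a maximum set of mutually non-attacking bishops) is $\alpha(B^t_{m\times n})=\gcd(m,n)$. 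Applying the general inequality $\gon(G)\le|V(G)|-\alpha(G)$ -- place one chip off a maximum independent set $I$ and fire $V(G)\setminus\{v\}$ to deliver a chip to any $v\in I$ without creating debt -- yields the upper bound $mn-\gcd(m,n)$.

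The main obstacle, and the real content, is matching these upper bounds with lower bounds. For the equality statements I must construct scrambles (or tree-decomposition width bounds) whose order meets the sweep cut; this is precisely why the hypotheses demand a board wide relative to its height ($n\ge 3m-2$, $n\ge 6m$, or $n\gg m$), so there is room to host a scramble of the required order. I expect the hardest case to be the exact toroidal bishop identity $\gon(B^t_{m\times n})=mn-\gcd(m,n)$, since its lower bound must be tight to the last unit for all $m,n$; here I anticipate arguing directly from the clique structure rather than through a generic scramble. For the ordinary bishop the lower bound tracks the hardest (central) separation, which forces a pass to an even-sided subboard and explains the appearance of $m'=2\lfloor m/2\rfloor$, while the asymmetry of the knight's move makes both the scramble construction and the precise constants ($6m-8$ and $12m$) delicate.
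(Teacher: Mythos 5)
Your overall architecture — column-sweep divisors for the upper bounds, obstruction certificates for the lower bounds — is the same as the paper's, and your sweeps for the king's, toroidal king's, and bishop's upper bounds are essentially the paper's constructions. But your lower-bound methodology is backwards. The paper's chain is $\tw(G)\leq\sn(G)\leq\gon(G)$ together with $\sn(G)\leq\scw(G)$; the inequality $\scw(G)\leq\gon(G)$ that you invoke is nowhere established (screewidth enters the paper only as an upper bound on scramble number). More importantly, even granting it, a tree-cut decomposition that you \emph{build} certifies an \emph{upper} bound on a width parameter; it can never serve as a lower-bound certificate for gonality. In the paper, decompositions play exactly the opposite role: the width-$6$ decomposition of $K_{3\times 4}$ and the width-$(3m-6)$ decompositions of $K_{m\times m}$ are used to show that scramble number \emph{cannot} reach the true gonality. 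The only lower-bound tool you actually have is a scramble, and to make your equality claims you must exhibit the explicit pairwise edge-disjoint path systems giving egg-cut numbers $3m-2$, $6m$, $\frac{1}{6}(m'-1)m'(m'+1)$ per color class, $6m-8$, and $12m$; your proposal defers every one of these, as well as the specific three-column knight's divisor (rows $1,2,\dots,2,1$; $3,5,6,\dots,6,5,3$; $1,2,\dots,2,1$) needed to get the constant $10m-12$, which a generic ``band sweep'' does not produce on its own.

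The second, larger gap is the toroidal bishop's bullet, the one exact formula valid for all $m,n$. Here no scramble or width certificate can work even in principle: the paper shows, for example, that $\sn(B^t_{5\times 5})\leq 14 < 20 = \gon(B^t_{5\times 5})$, so ``arguing from the clique structure'' must mean something other than a separation argument. The paper's actual proof is a chip-firing argument in the style of the rook's graph result: assuming a positive-rank divisor of degree less than $mn-\gcd(m,n)$, choose among equivalent divisors one maximizing the chip count on the poorest $d$-diagonal; run Dhar's burning algorithm from a chip-free vertex on that diagonal. Since each diagonal is a clique $K_{\lcm(m,n)}$ carrying fewer than $\lcm(m,n)-1$ chips, the whole diagonal burns, the fire propagates through the intersecting $s$-diagonals (this is where the lemma that every $s$-diagonal meets every $d$-diagonal of its color is needed), and firing the unburned set strictly enriches the poorest $d$-diagonal — contradicting maximality. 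The even-by-even case needs a separate reduction to the two connected components. Without this argument (or a substitute of comparable strength), the equality $\gon(B^t_{m\times n})=mn-\gcd(m,n)$, and hence the theorem, is not proved.
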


The second theorem gives a qualitative summary of how different graphs behave as we fix $m$ and vary $n$.

\begin{theorem}\label{theorem:bound_in_just_m} Let $G_{m\times n}$ be an $m\times n$ chess graph from one of the following families: rook's, queen's, toroidal queen's, king's, toroidal king's, bishop's, toroidal bishop's, knight's, and toroidal knight's.  Then, as we vary $n$, there exists a bound on $\gon(G_{m\times n})$ solely in terms of $m$ if and only if the graph is not rook's, queen's, toroidal queen's, or toroidal bishop's.
\end{theorem}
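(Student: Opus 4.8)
My plan is to prove the two directions of the biconditional separately, deriving the bounded families directly from Theorem~\ref{theorem:summary_of_bounds} and treating the unbounded families with a uniform lower-bound argument. For the families that \emph{do} admit a bound in terms of $m$ alone---king's, toroidal king's, bishop's, knight's, and toroidal knight's---there is essentially nothing new to prove: Theorem~\ref{theorem:summary_of_bounds} already records upper bounds of $3m-2$, $6m$, $\tfrac{m^2(m^2-1)}{6}$, $10m-12$ (with smaller constants when $m\le 3$), and $20m$ (with the constant $20$ when $m=2$), respectively. Each of these depends only on $m$ and holds for every $n$, so for each fixed $m$ the gonality stays bounded as $n\to\infty$.

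For the remaining families---rook's, queen's, toroidal queen's, and toroidal bishop's---I must instead show that the gonality is unbounded in $n$ for fixed $m$. For the first three I would use the treewidth lower bound $\tw(G)\le\gon(G)$ together with the minor-monotonicity of treewidth. A single row of $n$ squares forms a clique: any two squares in a common row are joined by a rook move, hence also by a queen move, in both the planar and toroidal settings. Thus each of these graphs contains $K_n$ as a subgraph, so $\gon(G_{m\times n})\ge\tw(G_{m\times n})\ge\tw(K_n)=n-1$, which is unbounded. For toroidal bishop's graphs I would instead quote the exact value $\gon(B_{m\times n}^t)=mn-\gcd(m,n)$ from Theorem~\ref{theorem:summary_of_bounds}; since $\gcd(m,n)\le m$ this is at least $m(n-1)$, again unbounded.

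The one point demanding care is that gonality is \emph{not} monotone under taking subgraphs or minors, so one cannot simply assert $\gon(G)\ge\gon(K_n)=n-1$ from the containment $K_n\subseteq G$. Routing the argument through treewidth---which is minor-monotone and bounds gonality from below---is precisely what legitimizes the clique-containment argument, and I expect this to be the only genuinely delicate step. Verifying the row-clique structure in each of the three families, and dispatching small-$m$ edge cases (for instance $m=1$, where the rook's and queen's graphs are literally $K_n$), is then routine.
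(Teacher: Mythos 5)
Your proof is correct, but for the unbounded families it takes a genuinely different route from the paper. The paper disposes of rook's, queen's, and toroidal queen's graphs by quoting known exact results: $\gon = (m-1)n$ for rook's graphs \cite{rooks_gonality}, $\gon = mn-m$ for queen's graphs with $(m,n)\notin\{(2,2),(3,3)\}$, and $\gon \geq mn-m$ for toroidal queen's graphs \cite{queens_graph_gonality}, each of which is unbounded in $n$ for fixed $m$. You instead give a self-contained argument: every row is a clique $K_n$ (rook moves join any two squares in a row, and the queen's and toroidal queen's graphs contain all rook edges), so minor-monotonicity of treewidth gives $\tw(G_{m\times n}) \geq \tw(K_n) = n-1$, and then $\gon(G_{m\times n}) \geq \tw(G_{m\times n}) \geq n-1$ by \cite{debruyn2014treewidth}. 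Your insistence on routing through treewidth rather than asserting subgraph-monotonicity of gonality (which is false) is exactly the right precaution and is what makes the argument sound; scramble number, being subgraph-monotone, would serve equally well here. The trade-off is clear: the paper's citations yield the sharp growth rate $\Theta(mn)$ but lean on substantially harder external computations, while your clique-plus-treewidth bound of $n-1$ is much weaker yet entirely sufficient for unboundedness and keeps the proof elementary and self-contained. For toroidal bishop's graphs you and the paper argue identically from the formula $\gon(B^t_{m\times n}) = mn-\gcd(m,n)$, and the bounded families are in both cases immediate from Theorem \ref{theorem:summary_of_bounds}.
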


\begin{proof}
    The existence of bounds for king's, toroidal king's, bishop's, knight's, and toroidal knight's follow from Theorem \ref{theorem:summary_of_bounds}.  The non-existence of a bound for toroidal knight's graph follows from our result in Theorem \ref{theorem:summary_of_bounds} that $\gon(B_{m\times n}^t)=mn-\gcd(m,n)$, which is unbounded as $n$ increases while $m$ remains fixed.  The non-existence of bounds for the remaining graphs comes  from previous work: it was shown in \cite{rooks_gonality} that the $m\times n$ rook's graph has gonality $(m-1)n$ for $m\leq n$; and it was shown in \cite{queens_graph_gonality} that the $m\times n$ queen's graph has gonality $mn-m$ for $m\leq n$ and $(m,n)\notin\{(2,2),(3,3)\}$, and that the $m\times n$ toroidal queen's graph has gonality at least $mn-m$ for $m\leq n$.
\end{proof}

Our paper is organized as follows. In Section \ref{section:background_and_definitions} we present background and definitions. In Section \ref{section:kings} we present our results on king's graphs, followed by toroidal king's graphs in Section \ref{section:toroidal_kings}. In Section \ref{section:bishops_graphs} we present our results on bishop's graphs, followed by toroidal bishop's graphs in Section \ref{section:toroidal_bishops_graphs}. In Section \ref{section:knightsgraphs} we present our results on knight's graphs, followed by toroidal knight's graphs in Section \ref{section:toroidal_knights_graphs}. We present directions for future work in Section \ref{section:future_directions}, and several computations referred to in earlier sections in the Appendices.

\medskip

\noindent \textbf{Acknowledgements.} The authors were supported by Williams College and the SMALL REU, as well as by the  NSF via grants DMS-2241623 and DMS-1947438. The second author was supported by Brown University.

\section{Background and definitions}
\label{section:background_and_definitions}

Let $G$ be a graph consisting of a finite set $V=V(G)$ of vertices and a finite multiset $E=E(G)\subseteq \{\{v,w\}\,|\, v,w\in V,v\neq w\}$ of edges.  If $E$ has no repeated edges, we call $G$  \emph{simple} graph.  If a vertex $v$ is in an edge $e$, we say $e$ is \emph{incident} to $v$, and if $\{v,w\}\in E$, we say $v$ and $w$ are \emph{adjacent}. 
We refer to the number of edges incident to $v$ as the \emph{valence}\footnote{This is also referred to as the \emph{degree} of $v$; however, the term degree is used with a different meaning in chip-firing games.}
of $v$, denoted $\val(v)$. For disjoint subsets $A,B\subset E(G)$, we let $E(A,B)$ be the set of edges with a vertex in $A$ and a vertex in $B$; and we let $E(v,w)=E(\{v\},\{w\})$. If for all pairs of vertices $v,w\in V$ we can find vertices $v=v_0,v_1,v_2,\ldots,v_k=w$ such that $v_i$ and $v_{i+1}$ are adjacent for all $i$, we say $G$ is \emph{connected}; otherwise it is \emph{disconnected}.  A subset $V'\subset  V(G)$ of vertices is said to be \emph{connected} if the subgraph $G'$ with vertex set $V'$ and edge set $\{\{v,w\}\in E\,|\,v,w\in V'\}$ is  connected graph.

An $m\times n$ chess graph has vertex set $$V=\{(i,j)\,|\,i,j\in\mathbb{Z}, 1\leq i\leq m,1\leq j\leq n\}.$$
When it is more notationally convenient, we may denote the vertex $(i,j)$ as $v_{i,j}$. 
We depict these vertices in a grid, with $m$ rows and $n$ columns, treating the upper left corner as the vertex $(1,1)$, with $i$ denoting the row of the vertex and $j$ denoting the column.  Identifying these vertices with the squares in an $m\times n$ chess board, two vertices are adjacent if and only if the chess piece in question can move directly between the corresponding squares.  In particular, we have the following adjacency rules for when distinct vertices $(i_1,j_1)$ and $(i_2,j_2)$ are adjacent in our chess  graphs.
\begin{itemize}
  \item King's graph:  when $i_1-i_2,j_1-j_2\in\{-1,0,1\}.$
  \item Bishop's graph: when $(i_1-i_2,j_1-j_2)$ is an integer multiple of either $(1,1)$ or $(1,-1)$.
  \item Knight's graph: when $(i_1-i_2,j_1-j_2)\in\{(\pm 1,\pm 2),(\pm 2,\pm 1)\}$.
  \item Rook's graph: when $i_1=i_2$ or $j_1=j_2$.
  \item Queen's graph: when either $(i_1-i_2,j_1-j_2)$ is an integer multiple of either $(1,1)$ or $(1,-1)$, or $i_1=i_2$ or $j_1=j_2$.
\end{itemize}
An $m\times n$ toroidal chess graph has the same vertex set and the same adjacency rules, except that all equalities are considered modulo $m$ for the first coordinate and modulo $n$ for the second coordinate.  When representing these graphs in figures, we either draw the graph with vertices and edges, or when such a figure would be hard to digest visually simply present a chessboard, with a caption making clear which chess piece we are considering.

We now present the theory of divisors on graphs, and refer the reader to \cite{sandpiles} for more details. We remark that most treatments of chip-firing games assume our graph is connected; we do not assume that here, and will note when our difference in assumptions has an impact on various results.

Given a graph $G$, a divisor is a map \(D:V(G) \rightarrow \mathbb{Z}\), i.e. an assignment of an integer to each vertex of $G$. A divisor is thought of intuitively as a collection of poker chips placed on each vertex, where $v$ receives $D(v)$ chips, with negative integers representing debt. If $D(v)\geq 0$ for all $v$, we say that $D$ is \emph{effective}. The total number of chips on the graph, $\sum_{v\in V(G)} D(v)$, is referred to as the \textit{degree} of the divisor, denoted $\deg(D)$.  The set of all divisors forms a group under the operation defined by vertex-wise addition (i.e. $(D+D')(v)=D(v)+D'(v)$; this group is isomorphic to the free abelian group on $V(G)$.

Given a divisor $D$ on a graph $G$ and a vertex $v\in V(G)$, we can transform $D$ into another divisor $D'$ through a \emph{chip-firing move at $v$}.  This removes $\val(v)$ chips from $v$ and transfers them to those vertices adjacent to $v$, one along each edge incident to $v$. More formally, $D'(v)=v-\val(v)$, and $D'(w)=D(w)+|E(v,w)|$ for all $w\neq v$.  Two divisors $D$ and $D'$ are said to be \emph{equivalent}, written $D\sim D'$, if $D'$ can be obtained from $D$ via some sequence of chip-firing moves.  This relation is an equivalence relation on the set of all divisors.

A collection of vertices can be fired in any order without changing the final outcome.  This leads us to define \emph{set-firing moves}:  given $A\subset V(G)$, set-firing $A$ means to fire the vertices of $A$ in any order.  This results in a net flow of chips only along the edges leaving the set, whereas for $v,w\in A$ the movement of chips from $v$ to $w$ cancels with the movement of chips from $w$ to $v$.
Examples of two set-firing moves are depicted in Figure \ref{fig:set-firing}. Since the divisors differ by a sequence of chip-firing moves, all three are equivalent.

\begin{figure}[hbt]
    \centering
    \includegraphics[width = 0.8\textwidth]{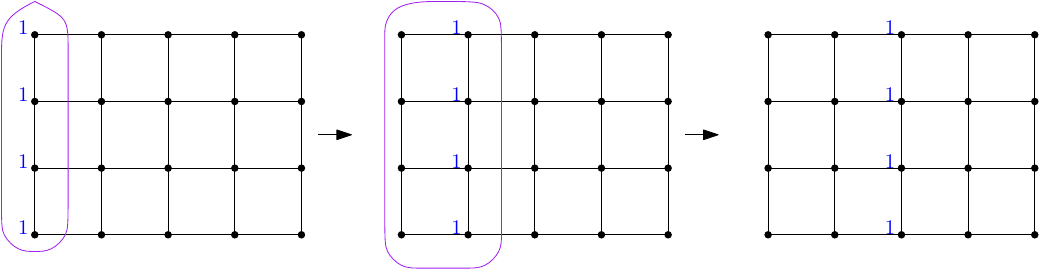}
    \caption{Two examples of set-firing moves, transforming a divisor $D$ into a divisor $D'$ and then into $D''$.  The first set-firing move fires all vertices in the first column; the second set-firing move fires all vertices in the first two columns.}
    \label{fig:set-firing}
\end{figure}

We now define the \emph{rank} of a divisor $D$, denoted $r(D)$.  If $D$ is not equivalent to any effective divisor, we say $r(D)=-1$.  Otherwise, $r(D)$ is the maximum integer $k$ such that, for every effective divisor $D'$ with $\deg(D')=k$, the divisor $D-D'$ is equivalent to some effective divisor.  Intuitively, we can think of the rank of a divisor as the maximum amount of added debt that divisor can eliminate from the graph, regardless of how that debt is placed.

The \emph{(divisorial) gonality} of a graph $G$, denoted $\gon(G)$, is the minimum degree of a positive rank divisor on a graph, i.e. the minimum degree of a divisor that can eliminate $-1$ debt from any vertex without introducing debt elsewhere.  We can equivalently define $\gon(G)$ as the minimum degree of a divisor $D$ such that for any vertex $v\in V(G)$, there exists $D'\sim D$ with $D'$ effective and $D'(v)\geq 1$.

For example, the divisor illustrated in \ref{fig:set-firing} has positive rank: firing the first column, then the first two, then the first three, and so on can move a chip to any vertex without introducing debt elsewhere.  Since that divisor has degree $4$, it follows that the gonality of that graph is at most $4$.  

\begin{remark}
    If a graph $G$ is not connected, say with connected components $G_1,\ldots,G_k$, then
    \[\gon(G)=\gon(G_1)+\cdots+\gon(G_k).\]
    To see this, note that we may replicate positive rank divisors on each component to obtain a positive rank divisor on the whole graph; and if a divisor $D$ places fewer than $\gon(G_i)$ chips on any component $G_i$, there must be some $v\in V(G_i)$ such that debt cannot be eliminated in $D-v$, since chip-firing may be treated independently on each component.
\end{remark}

While an explicit positive rank divisor provides an upper bound on gonality, finding a lower bound is more difficult.  \textit{A priori}, to show that the gonality of a graph is at least $k$, one must consider all degree $k-1$ divisors, and show that for each there exists at least one vertex $v$ such that that divisor cannot eliminate debt on $v$.  While extremely brute force, this strategy can be implemented for relatively small graphs by leveraging Dhar's burning algorithm \cite{sandpiles}, which refers to the first three steps of the following process.

\begin{enumerate}
    \item Given a graph $G$ and an effective divisor $D$, choose a vertex $q\in G$. Start a ``fire" at this vertex, burning it.
    \item Whenever an edge is incident to a burned vertex, that edge burns.
    \item Whenever a vertex $v$ is incident to more than $D(v)$ burning edges, the vertex $v$ burns.
    \item If the whole graph burns, terminate.  If some set $S$ of vertices is unburned, set-fire $S$, and return to step~(1).
  %  \item Continue performing step 2 until no new vertices are ``burning." Set-fire all ``unburned" vertices.
    %\item Repeat steps 1-3 until the entire graph ``burns." 
    %\item If at the end of these steps, there have been chips moved to the vertex $v$, choose another vertex in $G$ and repeat. If at any point there exists a vertex to which this algorithm fails to move chips, it is in fact impossible to move any chips to this vertex without introducing debt somewhere else in the graph; therefore $D$ is not a gonality-winning divisor. If that is not the case, $D$ is a gonality-winning divisor.
\end{enumerate}

An example of the start of this algorithm is illustrated Figure \ref{fig:dhars}. The fire starts at the vertex $q$, and spreads along incident edges.  The adjacent vertex with $1$ chip does not immediately burn, though the one with $0$ chips does.  Its edges burn, burning the left vertex with one chip, burning its edges.  Here the fire stabilizes, since the two rightmost vertices have at least as many chips as burning edges.  Those two vertices are fired, yielding the divisor on the right.  If we ran the burning process again, the whole  graph would burn (first the left vertex with $1$ chip, then the vertex with $2$ chips, and from there the rest of the graph).  Since the whole graph would burn with no chips on $q$, we know that the illustrated divisor does not have positive rank.
\begin{figure}[hbt]
    \centering
    \includegraphics[scale=0.8]{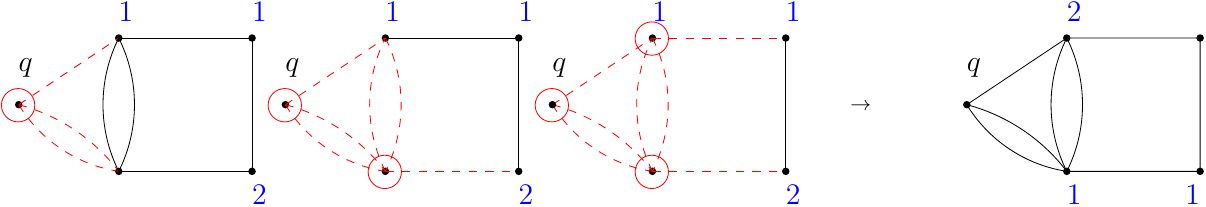}
    \caption{An example of Dhar's Burning algorithm, starting at the vertex $q$.}
    \label{fig:dhars}
\end{figure}

One can show  in general that firing $S$ never introduces debt, and that this process will eventually terminate; see \cite[Chapter 3]{sandpiles} for details.  Moreover, if we start with $q$ such that $D(q)=0$, then the final divisor places a chip on $q$ if and only if there exists an effective divisor $D'\sim D$ such that $D'(q)\geq 1$.  Thus this method can be used to check if an effective divisor $D$ has positive rank:  for each $v\in V(G)$, run the algorithm from $v$, and check if $v$ ends up with at least one chip.  If yes for all $v$, then $r(D)>0$; otherwise $r(D)=0$.  From there, showing that $\gon(G)\geq k$ is accomplished by performing this check for all effective divisors of degree $k-1$.

\begin{remark}  If $G$ is a connected graph, the output of the above algorithm has the property that no nonempty subset of $V(G)-q$ can be set-fired without introducing debt somewhere.  This property does not necessarily hold if $G$ is not connected.
\end{remark}

To avoid such a brute-force approach, one can study graph parameters that serve as lower bounds on gonality.  For instance, the well-studied parameter of treewidth, $\tw(G)$, is known to be a lower bound $\gon(G)$ \cite{debruyn2014treewidth}.  More recently, the scramble number of a graph, $\sn(G)$, was introduced in \cite{new_lower_bound} and shown to be a lower bound on $\gon(G)$.  Since $\tw(G)\leq \sn(G)\leq \gon(G)$ for any graph $G$ \cite[Theorem 1.1]{new_lower_bound}, scramble number is a strictly better lower bound, so we focus on it here.

A \textit{scramble} \(\mathcal{S}\) on a graph $G$ is a nonempty collection of connected subsets (referred to as \emph{eggs}) of $V(G)$. To any scramble $\mathcal{S}$, we associate two numbers:  the hitting number $h(\mathcal{S})$, and the egg-cut number $e(\mathcal{S})$.  A \emph{hitting set} for $\mathcal{S}$ is a set $V'\subset V(G)$ such that every egg in $\mathcal{S}$ has a vertex in $V'$; $h(\mathcal{S})$ is then the minimum size of any hitting set.  An \emph{egg-cut} for $\mathcal{S}$ is a subset $E'\subset E(G)$ such that deleting $E'$ from $G$ disconnects the graph into multiple components, at least two of which contain an egg; $e(\mathcal{S})$ is then the minimum size of any egg-cut.  The \emph{order} of a scramble is defined to be the minimum of the hitting number and the egg-cut number:
\[||\mathcal{S}||=\min\{h(\mathcal{S}),e(\mathcal{S})\}.\]
Finally, the scramble number of a graph is the maximum order of any scramble on the graph.

For an example of a scramble, consider the $4\times 5$ grid graph in Figure \ref{fig:set-firing}.  Let $\mathcal{S}$ consist of five eggs, the first consisting of the four vertices in the first column, the second consisting of the four vertices in the second column, and so on.  Since these eggs do not overlap, we immediately have the the hitting number equals the number of eggs, so $h(\mathcal{S})=5$.  The four edges connecting the first and second columns is an egg-cut, so $e(\mathcal{S})\leq 4$.  To see that $e(\mathcal{S})\geq 4$, consider an arbitrary pair of eggs in $\mathcal{S}$.  We can find four pairwise edge-disjoint paths between them: start at any vertex of the left egg and moving across the horizontal edges until reaching the right egg.  Any egg-cut $E'$ separating those two eggs must include at least one edge from each of these four paths; otherwise the eggs would be in the same component of $G-E'$.  As our choice of eggs was arbitrary, we have that any egg-cut has at least four edges in it, so $e(\mathcal{S})\geq 4$, and thus $e(\mathcal{S})=4$.  This means that $||\mathcal{S}||=\min\{4,5\}=4$, so $\sn(G)\geq 4$.  In fact, since $\gon(G)\leq 4$, we have $4\leq \sn(G)\leq \gon(G)\leq 4$, so $\sn(G)=\gon(G)=4$.

For some graphs, scramble number is strictly smaller than gonality.  In such instances, it is useful to have another upper bound on $\sn(G)$, so that we may rule it out as a tool for reaching a conjectured gonality.  To that end we now present the \emph{screewidth} of a graph, $\scw(G)$, which satisfies $\sn(G)\leq \scw(G)$ \cite{screewidth-og}.

A \emph{tree-cut decomposition} $\mathcal{T}=(T,\mathcal{X})$ of a graph $G$ is a tree $T$ and a function $\mathcal{X}:V(G)\rightarrow V(T)$, i.e. an assignment (not necessarily one-to-one, not necessarily onto) of the vertices of $G$ to the nodes of some tree $T$.  (For clarity, we refer to the vertices and edges of $T$ as its \emph{nodes} and \emph{links}, respectively, reserving \emph{vertices} and \emph{edges} for $G$.)  It is natural to represent a tree-cut decomposition as a thickened drawing of the tree $T$ with the vertices of $G$ inside its nodes, with the edges of $G$ drawn along the unique paths in $T$ between the nodes corresponding to its endpoints.  Two examples of tree-cut decompositions of the $4\times 5$ grid graph are illustrated in Figure \ref{figure:tcd_grid}.

\begin{figure}[hbt]
    \centering
    \includegraphics{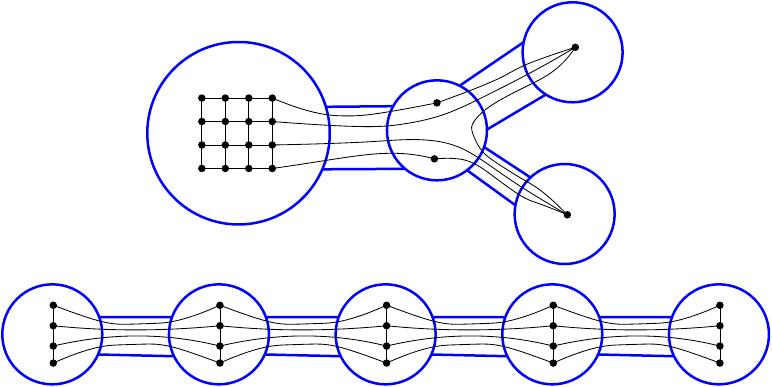}
    \caption{Two tree-cut decompositions of the $4\times 5$ grid.  The first has width $16$; the second has width $4$.}
    \label{figure:tcd_grid}
\end{figure}

Given such a representation of a tree-cut decomposition, we compute several numbers, one for each link and one for each node.  For each link $l$ of $T$, we count the number of edges in $G$ passing through $l$.  For each node $b$ in $T$, we count the number of vertices of $G$ drawn in $b$, and add the number of ``tunneling edges'' of $b$, which have neither endpoint in $b$ but still pass through it.  The maximum of all these numbers is called the \emph{width} of $\mathcal{T}$, denoted $w(\mathcal{T})$.  In our example, the width of the first is computed as the maximum of $4$, $3$, $3$ (from the links), $12$, $5$, $1$, and $1$ (from the nodes), so its width is $12$; the width of the second decomposition is $4$.

The \emph{screewidth} of $G$, written $\scw(G)$, is the minimum width of any tree-cut decomposition of $G$.  For the $4\times 5$ grid graph, we thus have $\scw(G)\leq 4$; since $4=\sn(G)\leq \scw(G)$, we can also deduce that $\scw(G)=4$.  Indeed, without knowing anything about chip-firing games, this could have been used to deduce that $\sn(G)=4$ given that $\sn(G)\geq 4$.
%We also define the \textit{egg cut}, \(e(\mathcal{S})\) to be the minimum number of edges to be deleted such that the disconnected graph has at least one egg in at least two connected subgraphs. Similarly, we define the \textit{hitting number}, \(h(\mathcal{S}) := \min_{A \subseteq V(G)}\{|A| \: \vert \: \text{for every } S \in \mathcal{S} \: \exists v \in A\}\). Then we define scramble number, \(sn(G) := \max_{\mathcal{S}}\{\min(e(\mathcal{S}), h(\mathcal{S}))\}\). From Harp et al. \cite{new_lower_bound} we have that \(sn(G) \leq gon(G)\).
%Cite the result that scramble number is a lower bound on gonality.
%Scramble number provides the best lower bound we currently have on gonality \cite{new_lower_bound}.

We close this section by recalling several useful bounds on gonality from previous work.  
\begin{lemma}[\S4 in \cite{debruyn2014treewidth}, Theorem 1.1 in \cite{new_lower_bound}] \label{lemma:common_graph_gonalities}
We have the following formulas for gonality.
\begin{itemize}
    \item[(i)] The gonality of a tree is $1$; moreover, trees are the only graphs of gonality $1$.
    \item[(ii)] The gonality of a cycle graph is $2$.
    \item[(iii)] The gonality of a complete graph $K_n$ on $n$ vertices is $n-1$.
    \item[(iv)] The gonality of a complete bipartite graph $K_{m,n}$ is $\min\{m,n\}$.
\end{itemize}
Moreover, in each of the above cases, scramble number equals gonality.
\end{lemma}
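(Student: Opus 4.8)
The plan is to prove each value by a matching pair of bounds: an explicit positive-rank divisor of the stated degree for the upper bound, and a scramble of the same order for the lower bound. Since $\tw(G)\le\sn(G)\le\gon(G)$ (recalled above), a scramble of order $k$ gives $\gon(G)\ge\sn(G)\ge k$; combined with an upper bound of $k$ coming from a divisor, this forces $k\le\sn(G)\le\gon(G)\le k$, which simultaneously pins down the gonality and yields the ``moreover'' clause $\sn(G)=\gon(G)$. In this way I never need to compute rank directly beyond exhibiting one good divisor per family, and I can bypass treewidth entirely (though the original references presumably argue via $\tw\le\gon$ together with the identities $\tw=1,2,n-1,\min\{m,n\}$ in these four families).

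For the upper bounds I would write down the following divisors. On a \emph{tree}, place a single chip on a vertex $v_0$; to route it to a target $t$, walk along the unique $v_0$–$t$ path and repeatedly set-fire the component of $G$ obtained by deleting the next path edge, on the side currently holding the chip. Each such set has exactly one boundary edge, so firing it slides the lone chip one step toward $t$ with no debt, giving $\gon\le1$. On a \emph{cycle} $C_n$, place two chips on one vertex; firing that vertex and then successively larger symmetric arcs spreads the two chips outward in both directions, realizing each $\mathbf 1_{v_k}+\mathbf 1_{v_{-k}}$, so a chip reaches every vertex and $\gon\le2$. On $K_n$, put one chip on every vertex except a chosen $q$; set-firing $V\setminus\{q\}$ sends one chip along each of the $n-1$ edges into $q$, leaving $(n-1)\cdot q$, so $\gon\le n-1$. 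On $K_{m,n}$ with smaller side $A$ ($|A|=m\le n$), put one chip on each vertex of $A$; to reach a vertex $b$ of the large side, set-fire $V\setminus\{b\}$, whose only boundary edges are the $m$ edges from $A$ to $b$, so each chip of $A$ moves once to $b$ and no vertex goes negative, giving $\gon\le m=\min\{m,n\}$.

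For the lower bounds I would use the scramble $\scramble$ whose eggs are all singletons. Its hitting number is $|V(G)|$, and its egg-cut number equals the edge connectivity of $G$ (the least number of edges whose removal disconnects it), since with every vertex an egg any disconnection separates two eggs. As $|V(G)|$ exceeds the edge connectivity whenever $|V(G)|\ge2$, the order is exactly the edge connectivity, which is $1,2,n-1,\min\{m,n\}$ for trees, cycles, $K_n$, and $K_{m,n}$ respectively (the single-vertex tree is the degenerate case, a one-egg scramble of order $1$). These match the upper bounds and complete (i)--(iv) and the scramble equality by the squeeze above. For the assertion that trees are the \emph{only} graphs of gonality $1$: a disconnected graph has gonality at least $2$ by the component formula in the Remark, and a connected non-tree contains a cycle, on which two distinct vertices $u,v$ are joined by two edge-disjoint arcs; the two-egg scramble $\{\{u\},\{v\}\}$ then has hitting number $2$ and, by Menger's theorem, egg-cut number at least $2$, so $\gon\ge\sn\ge2$ (equivalently $\tw\ge2$).

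The main obstacle is getting the $K_{m,n}$ divisor right: the naive choice of loading the small side $A$ and then firing $A$ itself fails, because $A$ is independent, so firing it drains $\val(a)=n>1$ chips from each $a\in A$ and creates debt. The resolution---firing the complement $V\setminus\{b\}$ of the target, so that only the $m$ edges into $b$ cross the cut---is the key insight, and the same ``fire the complement of the target'' idea drives the $K_n$ case. A secondary technical point is justifying the egg-cut numbers: one must confirm that the singleton scramble's egg-cut is genuinely the edge connectivity rather than something smaller, and that two vertices of a cycle admit two edge-disjoint paths; both follow from Menger's theorem, but this is precisely where the lower bounds get their teeth.
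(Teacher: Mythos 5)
Your proof is correct, but note that the paper never actually proves this lemma: it is stated as a citation, with the gonality formulas coming from \S4 of the treewidth paper of van Dobben de Bruyn--Gijswijt and the scramble claim from Theorem 1.1 of Harp--Jackson--Jensen--Speeter. In those sources the lower bounds are obtained from treewidth ($\tw(G)\le\gon(G)$, together with $\tw=1,2,n-1,\min\{m,n\}$ for the four families), and the ``moreover'' clause then follows by sandwiching $\tw\le\sn\le\gon$. Your route replaces treewidth entirely by a direct scramble construction: the singleton scramble has hitting number $|V(G)|$ and egg-cut number equal to the edge connectivity $\lambda(G)$ (with all vertices as eggs, every disconnecting edge set is an egg-cut), so its order is $\lambda(G)$, which equals $1,2,n-1,\min\{m,n\}$ in the four families; this matches your explicit divisors (chip-shuttling along the unique path in a tree, two chips spreading around a cycle, all-but-one vertex chipped on $K_n$, and the small side of $K_{m,n}$ with ``fire the complement of the target''). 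Your construction is sound in each case --- in particular you correctly avoid the trap of firing the independent side of $K_{m,n}$, and your treatment of the converse in (i) is complete: disconnected graphs have gonality at least $2$ by the component formula in the paper's Remark, and a connected non-tree contains a cycle, whose two arcs give a two-singleton scramble of order $2$. What your approach buys is a self-contained argument using only the machinery this paper actually develops (scrambles and set-firing); what the cited route buys is brevity and the slightly stronger byproduct that even treewidth, a weaker lower bound than scramble number, already computes gonality for these families.
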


For the next two results, we let $\alpha(G)$ denote the independence number of a graph, which is the largest possible size of an independent set of vertices (i.e., a set of vertices with no two adjacent).

\begin{lemma}[Proposition 3.1 in \cite{gonality_of_random_graphs}]\label{lemma:n-alpha(G)} If $G$ is a connected, simple graph on more than one vertex, then $\gon(G)\leq |V(G)|-\alpha(G)$.  The same result holds for $G$ disconnected as long as $G$ has no isolated vertices.
\end{lemma}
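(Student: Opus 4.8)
The plan is to exhibit an explicit effective divisor of degree $|V(G)|-\alpha(G)$ and to show it has positive rank by performing a single set-firing move for each target vertex. First I would fix a maximum independent set $I\subseteq V(G)$, so that $|I|=\alpha(G)$, and define the divisor $D$ that places one chip on every vertex outside $I$ and no chips on the vertices of $I$; that is, $D(v)=1$ for $v\notin I$ and $D(v)=0$ for $v\in I$. This divisor is effective and has degree $|V(G)|-|I|=|V(G)|-\alpha(G)$, so it suffices to prove $r(D)\geq 1$: for every vertex $w$ we must find an effective divisor equivalent to $D$ with a chip on $w$.

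For $w\notin I$ there is nothing to do, since $D(w)=1$ already. The heart of the argument is the case $w\in I$, where I would set-fire the set $S=V(G)\setminus\{w\}$. The net flow of chips under this move occurs only along the edges leaving $S$, which are precisely the edges incident to $w$; thus $w$ gains $\val(w)$ chips, each neighbor $u$ of $w$ loses $|E(u,w)|$ chips, and all other vertices are unchanged.

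The key step---and the place where each hypothesis is used---is checking that the resulting divisor $D'$ remains effective. Because $G$ is simple, each neighbor $u$ of $w$ loses exactly one chip. Because $I$ is independent and $w\in I$, every neighbor $u$ of $w$ lies outside $I$, so $D(u)=1$ and hence $D'(u)=0\geq 0$; no vertex is driven into debt. Finally, since $G$ is connected on more than one vertex (or, in the disconnected case, has no isolated vertices), $w$ has at least one neighbor, so $\val(w)\geq 1$ and $D'(w)\geq 1$. Thus $D'\sim D$ is effective with a chip on $w$, establishing $r(D)\geq 1$ and hence $\gon(G)\leq \deg(D)=|V(G)|-\alpha(G)$.

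For the disconnected case I would observe that the set-firing move above moves chips only within the connected component containing $w$, so the identical argument applies verbatim provided $w$ has a neighbor, which the no-isolated-vertices hypothesis guarantees. (Alternatively one could invoke the component formula $\gon(G)=\sum_i\gon(G_i)$ together with the additivity of $|V(\cdot)|$ and $\alpha(\cdot)$ over components.) I do not anticipate a serious obstacle: the only point requiring care is that set-firing $S$ preserves effectiveness, which reduces to the two observations that $G$ is simple and that every neighbor of a vertex of $I$ carries a chip. This can equivalently be phrased through Dhar's burning algorithm started at $w$, where one checks that only $w$ burns before the complement is fired.
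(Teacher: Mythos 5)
Your proposal is correct and follows essentially the same route as the paper: the same divisor (one chip on each vertex outside a maximum independent set), the same set-firing of the complement $V(G)\setminus\{w\}$ for a target $w$ in the independent set, and the same effectiveness check using simplicity, independence, and the existence of a neighbor of $w$. Your handling of the disconnected case also matches the paper's remark that the argument adapts directly once no vertex is isolated.
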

The original proof was for $G$ connected, but can be quickly adapted for arbitrary graphs: choose an independent set $S$ with $|S|=\alpha(G)$, and place a single chip on every vertex besides those in $S$.  This divisor has positive rank: to move chips to any vertex $v\in S$, set-fire $\{v\}^C$; since $v$ has at least one neighbor in $G$, and all neighbors of $v$ have a chip and exactly one edge connecting them to $v$, this moves at least one chip to $v$ without introducing debt elsewhere.

\begin{lemma}[Corollary 3.2 in \cite{echavarria2021scramble}] \label{lemma:high_valence}
    Let $G$ be a simple connected graph on $n$ vertices, such that the minimum valence of a vertex of $G$ is at least $\lfloor n/2\rfloor +1$.  Then $\sn(G)=\gon(G)=n-\alpha(G)$.
\end{lemma}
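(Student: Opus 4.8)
The plan is to sandwich $n - \alpha(G)$ between the known upper bound on gonality and a lower bound coming from a single well-chosen scramble. Write $\alpha = \alpha(G)$. The minimum-valence hypothesis forces $n \geq 2$ and leaves no isolated vertices, so Lemma~\ref{lemma:n-alpha(G)} gives $\gon(G) \leq n - \alpha$; combined with the general inequality $\sn(G) \leq \gon(G)$, it suffices to exhibit a scramble $\mathcal{S}$ with $\|\mathcal{S}\| \geq n - \alpha$. This immediately forces
$$n - \alpha \;\leq\; \sn(G) \;\leq\; \gon(G) \;\leq\; n - \alpha,$$
collapsing all three quantities to $n-\alpha$.

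The scramble I would use is the \emph{edge scramble}: let $\mathcal{S}$ consist of one egg $\{u,v\}$ for each edge $uv \in E(G)$. Each egg is connected, so $\mathcal{S}$ is a legitimate scramble. A hitting set meets every egg exactly when it is a vertex cover of $G$; since the complement of a maximum independent set is a minimum vertex cover, the minimum hitting set has size $n - \alpha$, so $h(\mathcal{S}) = n - \alpha$ with no further work.

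The heart of the argument, and the step I expect to be the main obstacle, is the matching lower bound $e(\mathcal{S}) \geq n - \alpha$ on the egg-cut number. The key observation is that a component of $G - E'$ contains an egg precisely when it has at least two vertices (a connected piece on $\geq 2$ vertices carries a surviving edge, a singleton carries none), so any egg-cut $E'$ must leave at least two components of size $\geq 2$. Let $C$ be the smaller of two such components, so that $2 \leq c := |C| \leq \lfloor n/2\rfloor$, and note that $E'$ must contain every edge leaving $C$. Using $|E(C,V\setminus C)| = \sum_{v\in C}\val(v) - 2e_C$, where $e_C \leq \binom{c}{2}$ counts the edges inside $C$, together with the hypothesis $\delta := \min_v \val(v) \geq \lfloor n/2\rfloor + 1$, I would bound
$$|E'| \;\geq\; |E(C,V\setminus C)| \;\geq\; c\delta - 2\binom{c}{2} \;=\; c(\delta - c + 1).$$
The right-hand side is a concave quadratic in $c$, hence minimized over the integer range $2 \leq c \leq \lfloor n/2\rfloor$ at an endpoint. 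At $c = 2$ it equals $2(\delta-1) \geq 2\lfloor n/2\rfloor$, and at $c = \lfloor n/2\rfloor$ it is at least $2\lfloor n/2\rfloor$ since $\delta - \lfloor n/2\rfloor + 1 \geq 2$; in both cases $2\lfloor n/2\rfloor \geq n - 1 \geq n - \alpha$ because $\alpha \geq 1$. Thus $e(\mathcal{S}) \geq n - \alpha$.

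Combining the two computations gives $\|\mathcal{S}\| = \min\{h(\mathcal{S}), e(\mathcal{S})\} = n - \alpha$, completing the sandwich. The only delicate points are the parity bookkeeping in the concavity estimate (checking that both endpoints clear $n - \alpha$) and the equivalence between "a component contains an egg" and "a component has at least two vertices," which is exactly what allows the elementary degree bound on the edge boundary of $C$ to drive the egg-cut estimate.
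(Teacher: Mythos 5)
Your proof is correct, and in fact there is nothing in the paper to compare it against: the paper does not prove this lemma, but imports it as Corollary 3.2 of \cite{echavarria2021scramble}. Your route --- the edge scramble (the $2$-uniform scramble $\mathcal{E}_2$ of \cite{uniform_scrambles}), with $h(\mathcal{E}_2)$ equal to the vertex cover number $n-\alpha(G)$ and the egg-cut number bounded below by counting edges leaving a small component --- is exactly the technique of the cited source, and it is also the technique this paper itself uses in the proof of Theorem \ref{theorem:2xn_toroidal_kings} for $K^t_{2\times 5}$. The details check out: a component of $G-E'$ contains an egg precisely when it has at least two vertices, so an egg-cut must fully separate some component $C$ with $2\leq c:=|C|\leq\lfloor n/2\rfloor$, simplicity gives $|E(C,V\setminus C)|\geq c\delta-c(c-1)=c(\delta-c+1)$, and concavity reduces the estimate to the two endpoints, both of which are at least $2\lfloor n/2\rfloor\geq n-1\geq n-\alpha(G)$. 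Two small boundary remarks, neither of which affects correctness: when $n=3$ (the only case with $n<4$ permitted by the valence hypothesis, namely $G=K_3$) no egg-cut exists at all, so your bound on $e(\mathcal{E}_2)$ holds vacuously and the order is just $h(\mathcal{E}_2)$; and the application of Lemma \ref{lemma:n-alpha(G)} is legitimate since the valence hypothesis rules out $n\leq 2$ and connectivity is assumed.
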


\section{King's Graphs}
\label{section:kings}
A king's graph, denoted $K_{m\times n}$, consists of a grid graph with added diagonals. A $5\times 8$ king's graph is depicted in Figure \ref{fig:23332-king-win}, with a divisor that we will see has positive rank.

\begin{figure}[hbt]
    \centering
    \includegraphics{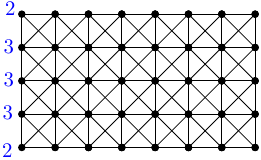}
    \caption{A positive rank divisor on a $5\times 8$ king's graph.}
    \label{fig:23332-king-win}
\end{figure}

\begin{theorem}\label{theorem:gonality_kings}
We have $\gon(K_{m\times n})\leq 3m-2$, with equality for all $n\geq 3m-2$.    
\end{theorem}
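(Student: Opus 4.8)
The plan is to establish the upper bound $\gon(K_{m\times n})\le 3m-2$ by writing down an explicit positive-rank divisor, and then to match it with the lower bound $\gon(K_{m\times n})\ge 3m-2$ (when $n\ge 3m-2$) by constructing a scramble of order $3m-2$ and invoking $\sn(K_{m\times n})\le \gon(K_{m\times n})$ from \cite{new_lower_bound}.

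For the upper bound I would take the divisor $D$ that places all of its chips on the first column, with $2$ chips on each of the corner vertices $(1,1)$ and $(m,1)$ and $3$ chips on each interior vertex $(i,1)$ for $2\le i\le m-1$; this is the divisor depicted in Figure \ref{fig:23332-king-win}, and its degree is $2+3(m-2)+2=3m-2$. To show $r(D)>0$, I would sweep this pattern across the board by set-firing prefixes of columns. The key observation is that a king on column $k$ reaches outside the set $\{$columns $1,\dots,k\}$ only through column $k+1$, so set-firing this set moves chips only across the boundary between columns $k$ and $k+1$. A short count shows that when the pattern $2,3,\dots,3,2$ sits on column $k$, each of its vertices sends out exactly as many chips as it holds (the corner vertices have two rightward edges and the interior vertices have three), so column $k$ empties and column $k+1$ receives exactly the pattern $2,3,\dots,3,2$. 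Thus $D$ is equivalent to the same configuration on any column, and since every vertex in that column then carries at least $2$ chips, debt can be removed from any vertex. Hence $r(D)>0$ and $\gon(K_{m\times n})\le 3m-2$.

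For the lower bound I would use the scramble $\mathcal{S}$ whose eggs are the $n$ columns of the board. Each column is connected and the eggs are pairwise disjoint, so any hitting set needs at least one vertex per column, giving $h(\mathcal{S})=n\ge 3m-2$. For the egg-cut number, I note that the vertical slice between any two adjacent columns consists of exactly $3m-2$ edges ($m$ horizontal, $m-1$ down-diagonal, and $m-1$ up-diagonal), and deleting one such slice separates the column-eggs into two nonempty groups, so $e(\mathcal{S})\le 3m-2$. The crux is the reverse inequality: any egg-cut must separate two columns $a<b$, so I would show that there are $3m-2$ edge-disjoint paths between column $a$ and column $b$, forcing any such cut to delete at least $3m-2$ edges. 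Together these give $\|\mathcal{S}\|=\min\{n,3m-2\}=3m-2$, and therefore $3m-2\le \sn(K_{m\times n})\le \gon(K_{m\times n})$.

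The main obstacle is producing the $3m-2$ edge-disjoint paths, and I expect the cleanest route is a flow argument. Orient every horizontal and diagonal edge from left to right. At each interior vertex $(i,j)$ with $1<j<n$, the number of leftward neighbors equals the number of rightward neighbors (two each in the top and bottom rows, three each otherwise), so in-degree equals out-degree and the all-ones assignment on the oriented edges is a conserved unit flow. This flow is acyclic, has value $3m-2$ across every column boundary, and hence decomposes into $3m-2$ edge-disjoint directed paths from column $1$ to column $n$; restricting to columns $a,\dots,b$ yields the paths between any two columns. With the edge-disjoint paths established, the equality $e(\mathcal{S})=3m-2$ and thus the full statement follow.
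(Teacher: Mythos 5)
Your proposal is correct and follows essentially the same route as the paper: the same degree-$(3m-2)$ divisor (chips $2,3,\dots,3,2$ on the first column) swept rightward by set-firing prefixes of columns, and the same scramble whose eggs are the $n$ columns, with hitting number $n$ and egg-cut number $3m-2$. The only difference is in how the $3m-2$ pairwise edge-disjoint paths between two column-eggs are produced—the paper exhibits them explicitly ($m$ horizontal paths plus $2m-2$ zigzags alternating $(1,1)$ and $(1,-1)$ moves), while you obtain them by decomposing the all-ones left-to-right unit flow—which is a valid, equally clean justification of the same fact.
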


\begin{proof}
    For an upper bound, consider the divisor $D$ that places $3$ chips on each vertex of the first column, except that the top and bottom vertices receive only $2$ chips; this is illustrated in Figure \ref{fig:23332-king-win} for $m=5$. Firing all vertices in the first column translates each chip one vertex to the right.  Then, firing the first two columns translates each chip one vertex to the right again.  Continuing in this fashion translates chips across the whole graph, so $D$ has positive rank.  It follows that $\gon(G)\leq \deg(D)=3m-2$.

    If $n\geq 3m-2$, consider the scramble $\mathcal S$ on $K_{m\times n}$ whose eggs are the columns of the graph. Since the $n$ eggs are disjoint, $h(\mathcal S) = n$. Between any two distinct eggs, we can find $3m-2$ pairwise edge-disjoint paths:  $m$ travelling horizontally (starting from any vertex in the left egg), and $2m-2$ alternating between $(1,1)$ and $(1,-1)$ moves (starting from any vertex in the left egg, except that the top vertex cannot start $(1,1)$ and the bottom vertex cannot start $(1,-1)$).  For an egg-cut to separate these eggs, it must contain at least one edge from each of these paths, so $e(\mathcal{S})\geq 3m-2$; and in fact $e(\mathcal{S})= 3m-2$, for instance achieved by deleting all edges between the first two columns. The order of $\mathcal S$ is $\min\{n,3m-2\}=3m-2$, so $\gon(G)\geq \sn(G)\geq ||\mathcal{S}||=3m-2$.  Combined with our upper bound, we have $\gon(G)=3m-2$.
\end{proof}

It is natural to ask whether we can push these results further, to find $\gon(K_{m\times n})$ when $3m-2>n$.  We present several results to this effect for small values of $m$.

\begin{proposition}  We have the following formulas for the scramble number and gonality of $2\times n$ and $3\times n$ king's graphs:

\begin{itemize}
    \item $\sn(K_{2\times n})=\gon(K_{2\times n})=\begin{cases}
        3&\textrm{ if }n=2
        \\ 4&\textrm{ if }n\geq 3.
    \end{cases}$
    \item $\sn(K_{3\times4})=6$ and $\gon(K_{3\times 4})=7$, and  $\sn(K_{3\times n})=\gon(K_{3\times n})=\begin{cases}
        5&\textrm{ if }n=3
        \\ 7&\textrm{ if }n\geq 5.
    \end{cases}$ 
    
\end{itemize}
\end{proposition}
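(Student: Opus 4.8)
My plan is to treat upper and lower bounds separately and to organize everything by the value of $m$ and the range of $n$. For the upper bounds I would exhibit explicit positive-rank divisors. The graph $K_{2\times 2}$ is the complete graph $K_4$, so Lemma~\ref{lemma:common_graph_gonalities} gives $\gon=\sn=3$ at once. In every other case the column divisor of Theorem~\ref{theorem:gonality_kings} already gives $\gon(K_{2\times n})\le 4$ and $\gon(K_{3\times n})\le 7$ for all $n$, which is the correct bound except for $K_{3\times 3}$; there I would instead invoke Lemma~\ref{lemma:n-alpha(G)}, placing one chip off a maximum independent set (the four corners) to get $\gon(K_{3\times 3})\le 9-\alpha(K_{3\times 3})=5$. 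Thus the substance of the proposition lies in the matching lower bounds, and outside the ranges already handled by Theorem~\ref{theorem:gonality_kings} (namely $n\ge 4$ for $m=2$ and $n\ge 7$ for $m=3$) these reduce to the transitional cases $K_{2\times 3}$, $K_{3\times 3}$, $K_{3\times 4}$, $K_{3\times 5}$, and $K_{3\times 6}$.

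For the lower bounds I would use one versatile family: the \emph{all-edges scramble} $\mathcal S$ whose eggs are exactly the edges of the graph (each pair of adjacent vertices). A hitting set for $\mathcal S$ is precisely a vertex cover, so $h(\mathcal S)=|V|-\alpha$; and any egg-cut must isolate some two adjacent vertices, so $e(\mathcal S)$ equals the minimum edge-boundary of a two-vertex ``domino'', which in a king's graph is realized by a corner domino. A short local computation gives this minimum as $4$ when $m=2$ and $6$ when $m=3$. Hence $\|\mathcal{S}\|=\min\{|V|-\alpha,\,\text{corner value}\}$ evaluates to $4$ for $K_{2\times 3}$, to $5$ for $K_{3\times 3}$, and to $6$ for $K_{3\times 4}$, matching the claimed values. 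For $K_{3\times 5}$ and $K_{3\times 6}$ I need order $7$, so I would \emph{prune} the scramble, deleting the eight corner-domino eggs. Removing these raises the egg-cut to the next cheapest separation, the $7$-edge cut between two columns, and I would verify via Menger that no surviving egg can be isolated by fewer than $7$ edges; meanwhile the hitting number drops only to $|V|-\alpha(G')$, where $G'$ deletes the eight corner edges, and a direct computation of $\alpha(G')$ shows this stays at least $7$. This delivers $\sn\ge 7$ for $n=5,6$, completing the lower bounds everywhere except the gonality of $K_{3\times 4}$.

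The main obstacle is exactly $K_{3\times 4}$, where the scramble bound of $6$ falls genuinely short of the gonality $7$, so no scramble can certify the gonality lower bound. I would split this into two independent tasks. First, to pin down $\sn(K_{3\times 4})=6$ I must also bound it from above; since $\sn\le\scw$, it suffices to exhibit a tree-cut decomposition of width $6$, which I expect to obtain by sweeping the board column-by-column and merging the two central columns. Second, and this is the crux, I must prove $\gon(K_{3\times 4})\ge 7$ by a non-scramble argument, namely that no effective divisor of degree $6$ has positive rank. I would establish this by running Dhar's burning algorithm from each vertex against each candidate divisor; because the brute-force count of degree-$6$ divisors is large, I expect this step to require either a computer-assisted search (cut down using the symmetries of the board and equivalence of divisors) or a careful structural case analysis exploiting that any positive-rank divisor must survive burning from all twelve vertices. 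Organizing this finite check cleanly is where most of the work will lie.
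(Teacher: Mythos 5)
Your overall skeleton matches the paper's: handle $K_{2\times 2}$ as $K_4$, use Theorem \ref{theorem:gonality_kings} for the stable range, certify the transitional cases with scrambles, and treat $K_{3\times 4}$ separately via a screewidth upper bound plus a non-scramble argument for $\gon \geq 7$. Your scramble bookkeeping (the all-edges scramble and its pruned variants, with hitting numbers computed as vertex covers) is a genuinely different and reasonable alternative to the paper's hand-built disjoint-egg scrambles, and your use of Lemma \ref{lemma:n-alpha(G)} for the $K_{3\times 3}$ upper bound is a valid substitute for the paper's explicit divisor. However, there are three concrete gaps. First, your key principle for egg-cuts is false as stated: an egg-cut need not isolate a two-vertex egg; it only needs to produce at least two components each containing an egg, and in general such a cut can be cheaper than every domino boundary (for the $2$-uniform scramble on two triangles joined by a bridge, the bridge alone is an egg-cut of size $1$, while every domino boundary is at least $2$). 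So ``$e(\mathcal{S})$ equals the minimum corner-domino boundary'' cannot be read off locally; you must bound $e(\mathcal{S})$ from below over \emph{all} cuts whose deletion leaves two egg-containing components, which is exactly what the paper does by exhibiting families of pairwise edge-disjoint paths (as in Lemma \ref{lemma:appendix_kings_scramble}). Your claimed values happen to be correct for these particular graphs, but the stated justification does not prove them, and the same caveat applies to the Menger-type verification you defer for the pruned scrambles on $K_{3\times 5}$ and $K_{3\times 6}$.

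Second, your proposed tree-cut decomposition for $\scw(K_{3\times 4})\leq 6$ fails: with bags given by column $1$, the merged columns $2$ and $3$, and column $4$ arranged in a path, the link separating column $1$ from the rest carries all $7$ edges between columns $1$ and $2$ (three horizontal plus four diagonal), so that decomposition has width $7$, not $6$. A width-$6$ decomposition exists but needs a different shape; for instance, take the middle row $\{(2,1),(2,2),(2,3),(2,4)\}$ as a central bag and the four half-rows $\{(1,1),(1,2)\}$, $\{(1,3),(1,4)\}$, $\{(3,1),(3,2)\}$, $\{(3,3),(3,4)\}$ as leaves of a star: each link then carries exactly $6$ edges, and the central bag has $4$ vertices plus $2$ tunneling edges. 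Third, and most importantly, the crux of the proposition --- that $\gon(K_{3\times 4})\geq 7$ --- is left as a plan rather than an argument. The paper's proof of this step is a substantial structural case analysis: it first normalizes a hypothetical positive-rank divisor of degree $6$ (effective, at most $\val(v)-1$ chips on each vertex, and no two adjacent vertices both carrying $\val-1$ chips), and then shows column by column that Dhar's burning algorithm started from a suitably chosen unchipped vertex burns the entire graph. Work of this scale (or a trusted exhaustive computation) is required to close your proof; as written, the hardest part of the proposition remains open.
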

This provides us with $K_{3\times4}$ as the first instance of a king's graph whose gonality cannot be computed using scramble number. 

\begin{proof}
    Since $K_{2\times 2}$ is the complete graph on $4$ vertices, it has gonality and scramble number $3$ by Lemma \ref{lemma:common_graph_gonalities}(iii).  We know $\gon(K_{2\times n})\leq 4$ by Theorem \ref{theorem:gonality_kings}.  For a lower bound, we first argue that $\sn(K_{2\times 3})\geq 4$. A scramble of order $4$ on this graph is given by $\mathcal{S}=\{\{v_{1,1},v_{1,2}\},\{v_{2,1}\},\{v_{2,2}\},\{v_{3,1},v_{3,2}\}\}$, illustrated on the left in Figure \ref{figure:kings_2by_3by}; hitting number is immediate to calculate as $4$, and egg-cut number can be seen to be at least $4$ by finding four pairwise edge-disjoint paths between each pair of eggs.  Since scramble number is subgraph monotone \cite[Proposition 4.5]{new_lower_bound}, we have $4\leq \sn(K_{2\times3})\leq \sn(K_{2\times n})$ for all $n\geq 3$.  This gives a lower bound on gonality, and so $\sn(K_{2\times n})=\gon(K_{2\times n})=4$ for $n\geq 3$

\begin{figure}[hbt]
    \centering
    \includegraphics[scale=1.7]{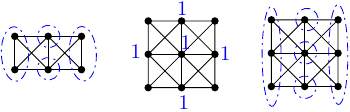}
    \caption{From left to right: a scramble of order $4$ on $K_{2\times3}$; a degree $5$ divisor of positive rank on $K_{3\times3}$; and a scramble of order $5$ on $K_{3\times3}$.}
    \label{figure:kings_2by_3by}
\end{figure}

    To show $\sn(K_{3\times 3})=\gon(K_{3\times 3})=5$, we find an upper bound from the positive rank divisor $D$ illustrated in the middle of Figure \ref{figure:kings_2by_3by}, and a lower bound from the scramble of order $5$ illustrated on the right in the same figure (the analysis is similar to that for $K_{2\times 3}$). For larger $n$, we know $\gon(K_{3\times n})\leq 7$ by Theorem \ref{theorem:gonality_kings}.  For a lower bound when $n\geq 5$, we consider the scramble on $K_{3\times 5}$ illustrated in Figure \ref{figure:k_35_scramble}.  Certainly the hitting number is $7$; to show the egg-cut number is at least $7$, we can find $7$ pairwise edge-disjoint paths between every pair of eggs; see Lemma \ref{lemma:appendix_kings_scramble} in Appendix \ref{appendix:kings} for more details.  This gives $7\leq \sn(K_{3\times 5})\leq \sn(K_{3\times n})$ for all $n\geq 5$, implying $\sn(K_{3\times n})=\gon(K_{3\times n})=7$ for such $n$.

    \begin{figure}[hbt]
        \centering
        \includegraphics[scale=1.7]{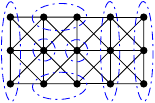}
        \caption{A scramble on $K_{3\times 5}$ with order $7$.}
        \label{figure:k_35_scramble}
    \end{figure}

    Finally, we consider $K_{3\times 4}$.  Figure \ref{figure:scw_for_k34} illustrates a scramble on the left, and a tree-cut decomposition on the right.  The scramble has hitting number $6$ and egg-cut number at least $7$ (the argument is nearly identical to that from the previous paragraph for $K_{3\times 5}$), so the scramble has order $6$.  The tree-cut decomposition has width $6$, giving us $6\leq \sn(K_{3\times 4})\leq \scw(K_{3\times 4})\leq 6$, and thus $\sn(K_{3\times 4})=6$.  To show that $\gon(K_{3\times 4})\geq 7$, we suppose for the sake of contradiction that $K_{3,4}$ has a positive rank divisor of degree $6$.  After carefully choosing an equivalent divisor without too many chips on any given vertex, we choose a particular vertex $v$ with no chips and argue that Dhar's burning algorithm will burn the whole graph, a contradiction to the divisor having positive rank.  This argument is long and technical, so we reserve it for Appendix \ref{appendix:kings} (in particular, see Lemma \ref{lemma:appendix_k34}).

    \begin{figure}[hbt]
        \centering
        \includegraphics[scale=1.7]{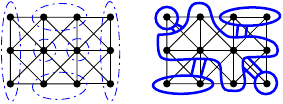}
        \caption{A scramble and a tree-cut decomposition on $K_{3\times 4}$, of order $6$ and width $6$, respectively.}
        \label{figure:scw_for_k34}
    \end{figure}
    
 \end{proof}

We close this section with a discussion of larger knight's graphs, some of which have unknown gonality.  Using a brute-force computation (for instance, using the Chip Firing Interface website \cite{chip_firing_interface}), we can determine that $\gon(K_{4\times 4})=10$ and $\gon(K_{4\times 5})=10$.  However, it is not possible to prove either result using scramble number.  Consider the tree-cut decompositions of $K_{4\times 5}$ in Figure \ref{figure:4x5_kings_scw}. This decomposition has width $8$; since scramble number is non-increasing under taking subgraphs \cite[Proposition 4.5]{new_lower_bound}, we have $\sn(K_{4\times 4})\leq \sn(K_{4\times 5})\leq\scw(K_{4\times 5})\leq8$.  Thus for $4\times n$ king's graphs, we cannot use scramble number to compute all gonalities; however, we find computationally that the eventual gonality of $10$ kicks in immediately for the $4\times 4$ graph, in contrast to having lower gonalities for $2\times 2$, $3\times 3$, and $3\times 4$ graphs.  In light of this computational result, we pose the following conjecture.

\begin{figure}[hbt]
    \centering
    \includegraphics[scale=1.3]{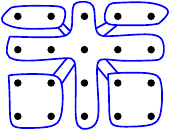}
    \caption{A tree-cut decomposition $K_{4\times 5}$  of width $8$ (edges of the graph omitted for clarity.)}
    \label{figure:4x5_kings_scw}
\end{figure}

\begin{conjecture}\label{conjecture:general_kings}
    For $4\leq m\leq n$, we have $\gon(K_{m\times n})=3m-2$.
\end{conjecture}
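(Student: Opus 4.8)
Because Theorem~\ref{theorem:gonality_kings} already supplies the upper bound $\gon(K_{m\times n})\le 3m-2$ for all $m$ and $n$, the entire content of the conjecture is the lower bound $\gon(K_{m\times n})\ge 3m-2$ in the window $m\le n<3m-2$, where the column scramble of Theorem~\ref{theorem:gonality_kings} has order only $\min\{n,3m-2\}=n$. The first point to settle is that this bound genuinely lies beyond scramble number: the tree-cut decomposition in Figure~\ref{figure:4x5_kings_scw} gives $\sn(K_{4\times 5})\le\scw(K_{4\times 5})\le 8<10$, and I would first check that an analogous ``staircase'' tree-cut decomposition yields $\scw(K_{m\times n})<3m-2$ throughout the window, so that $\sn$ is provably too weak and a direct chip-firing argument is unavoidable. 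The plan is then to generalize the Dhar's burning argument used for $K_{3\times 4}$ in Lemma~\ref{lemma:appendix_k34}.

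Concretely, I would assume for contradiction that $K_{m\times n}$ carries an effective divisor $D$ of degree $3m-3$ and positive rank. The first step is a normalization: replace $D$ by an equivalent $q$-reduced divisor, which (applying Dhar to singletons) forces $D(v)\le\val(v)-1$ for every $v\ne q$, so at most $2$, $4$, and $7$ chips on corner, edge, and interior vertices respectively. The engine of the lower bound is the observation that whenever Dhar's burning from a source leaves a nonempty unburned set $S$, stability of the fire gives $\deg D\ge\sum_{v\in S}D(v)\ge|E(S,V\setminus S)|$, the size of the edge cut around $S$. I would then show that the minimum edge cut separating the left boundary column from the right boundary column of $K_{m\times n}$ is exactly $3m-2$, realized by the $m$ horizontal and $2m-2$ diagonal edge-disjoint paths from the scramble of Theorem~\ref{theorem:gonality_kings}. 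Consequently no ``column-type'' firewall can survive within the budget $3m-3$, so a fire started at a boundary vertex must sweep across every vertical cut of the board.

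The main obstacle is that surviving this one-pass analysis does not immediately contradict positive rank, because a reduced divisor is obtained only after iterating the burning-and-firing loop, and for larger $m$ a small interior blob (for instance two adjacent interior vertices carrying $7$ chips each, costing $14\le 3m-3$ chips) can be fire-stable. I would therefore need to track the full reduction to the $q$-reduced divisor and show that $q$ ends empty, rather than arguing from a single burning pass. The delicate, case-heavy part---exactly as in the $K_{3\times 4}$ appendix---is controlling how the diagonal adjacencies let the fire leak around partial (non-column) walls, and doing so uniformly over all placements of $3m-3$ chips and all $n$ in the window. Two routes seem promising for organizing this: an induction that peels a boundary row or column and relates $K_{m\times n}$ to a smaller board whose gonality is already controlled, or a monovariant measuring how far the fire has advanced that certifies, using the cut bound, that the fire must pass each successive vertical cut. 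I expect the second to be cleaner but the first to require fewer ad hoc cases, with the base cases ($n=m$, already known computationally for $4\times 4$) and the boundary interactions in the inductive step being where the real work lies.
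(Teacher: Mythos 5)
You should first be aware that the paper does not prove this statement at all: it is posed as Conjecture~\ref{conjecture:general_kings}, supported only by the upper bound of Theorem~\ref{theorem:gonality_kings}, by brute-force computations giving $\gon(K_{4\times 4})=\gon(K_{4\times 5})=10$, and by the example following the conjecture, which uses the tree-cut decompositions of Figure~\ref{figure:square_kings_tree_cut} to show $\sn(K_{m\times m})\le\scw(K_{m\times m})<3m-2$, so that scramble number can never supply the needed lower bound. There is therefore no proof in the paper to compare against, and what you have written is likewise not a proof but a research plan. Your plan assembles the correct known ingredients --- the upper bound, the observation that the entire content is the lower bound in the window $m\le n<3m-2$, the screewidth obstruction, and the Dhar's-burning template from the paper's $K_{3\times 4}$ appendix lemma --- but the step that would constitute the actual proof is, in your own words, deferred to ``the real work,'' and that step is precisely the open problem.

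Concretely, the gap is this: your cut inequality $\sum_{v\in S}D(v)\ge|E(S,V\setminus S)|$ only rules out unburned sets $S$ whose boundary cut has size at least $3m-2$, i.e.\ column-type firewalls. To refute a positive-rank divisor of degree $3m-3$ one must show that \emph{no} nonempty fire-stable set of any shape survives the full burn-and-fire iteration from a suitably chosen $q$, uniformly over all chip placements, all $m\ge 4$, and all $n$ in the window; this is exactly what the paper's $K_{3\times 4}$ argument accomplishes by exhaustive, graph-specific case analysis, and no uniform replacement for that case analysis is proposed. Two smaller points. Your illustrative fire-stable blob (two adjacent interior vertices carrying $7$ chips each) is actually excluded by the normalization you just imposed: in a $q$-reduced divisor no two adjacent vertices can both carry valence-minus-one chips, since set-firing that pair creates no debt --- this is the same strengthened normalization the paper invokes in its appendix. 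And your fallback induction that ``peels a boundary row or column'' collides with the fact, noted in the paper, that gonality is not monotone under passing to subgraphs, so relating $K_{m\times n}$ to a smaller board cannot be routine; any such reduction would itself be a new idea beyond what either you or the paper currently has.
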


\begin{example}  To see that scramble number cannot be used to prove this result for any choice of $m=n\geq 4$, consider the family of tree-cut decompositions on $K_{m\times m}$ illustrated in Figure \ref{figure:square_kings_tree_cut} for $4\leq m\leq 7$. For $m$ odd these decompositions consist of a central plus-shaped bag, with $L$-shaped bags of decreasing size emanating towards the four corners, with two bags connected precisely when two of their vertices share an edge in $K_{m\times m}$.  There are no tunneling edges; the largest bag has $2m-1$ vertices; and the largest link adhesion occurs between the central bag and one of its neighboring bags.  A neighboring bag has $m-2$ vertices, of which $2$ share $2$ edges with the central bag, $1$ shares $5$, and the remainder share $3$.  Thus the maximum link adhesion is $4+5+3(m-4)=3m-6$, which since $m\geq 5$ is larger than $2m-1$, so the width of the tree-cut decomposition is $3m-6$ for odd $m$.  The even case is handled similarly, except the central bag is off-centered and includes a half-column to the upper right, and some of the bags have a more oblong shape.  A similar analysis shows that the largest bag size is $2m-1+\frac{m}{2}=\frac{5m}{2}-1$ and the largest edge adhesion is $4+5+3(m-5)=3m-6$.  The width is therefore $\max\{\frac{5m}{2}-1,3m-6\}$.  In both the even and odd cases, the width is strictly smaller than $3m-2$.  This implies that $\sn(K_{m\times m})\leq \scw(K_{m \times m})<3m-2$ for all $m\geq 4$, so if Conjecture \ref{conjecture:general_kings} holds, it cannot be proved for any square king's graph using scramble number.

\begin{figure}[hbt]
    \centering
    \includegraphics{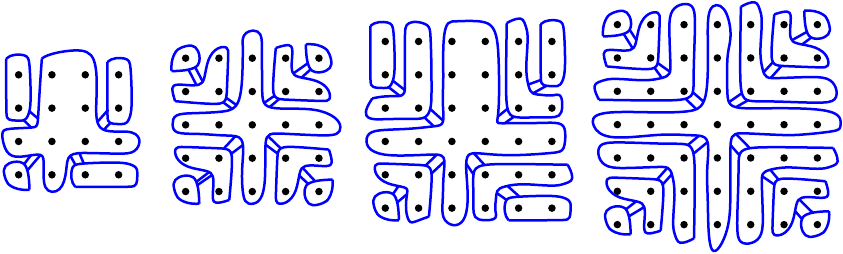}
    \caption{Tree-cut decompositions of $K_{m\times m}$ for $4\leq m\leq 7$, with widths $9$,  $12$, $14$, and $15$, respectively (edges of the king's graphs omitted for clarity).}
    \label{figure:square_kings_tree_cut}
\end{figure}
\end{example}

\section{Toroidal king's graphs}
\label{section:toroidal_kings}
Toroidal king's graphs add edges to the boundary rows and columns of a usual king's graph.  For $3\leq m\leq n$, this yields an $8$-regular graph, with every vertex having $8$ neighbors.  When $m=2$, fewer edges are added, since many of the edges that would be added are already present.  We thus split our results across two theorems.  First we present a lemma on the independence number of  toroidal king's graphs.

\begin{lemma}\label{lemma:toroidal_kings_independence}  The $m\times n$ toroidal king's graph with $m\leq n$ has independence number \[\alpha(K^t_{m\times n})=\begin{cases}
\left\lfloor \frac{m}{2}\right\rfloor \left\lfloor \frac{n}{2}\right\rfloor\textrm{ if $m$ or $n$ is even}
\\ \left \lfloor \frac{n}{2}\left\lfloor \frac{m}{2}\right\rfloor\right\rfloor\textrm{ if $m$ and $n$ are both odd}.
\end{cases}\]
\end{lemma}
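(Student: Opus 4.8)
The plan is to prove matching upper and lower bounds, after recording the key reformulation of adjacency. In the toroidal king's graph, two distinct vertices $(i_1,j_1)$ and $(i_2,j_2)$ are adjacent exactly when the cyclic distances satisfy $d_m(i_1,i_2)\le 1$ and $d_n(j_1,j_2)\le 1$, where $d_m$ is distance on the cycle $C_m$ of row indices and $d_n$ is distance on the cycle $C_n$ of column indices. Hence a set $S$ is independent if and only if every two of its vertices are at cyclic distance at least $2$ in the rows \emph{or} at least $2$ in the columns. In particular, for any two consecutive columns $j,j+1$ (indices mod $n$), the row-sets $R_j=\{i:(i,j)\in S\}$ and $R_{j+1}$ must be disjoint and their union must be an independent set in $C_m$; the symmetric statement holds for consecutive rows with the roles of $m$ and $n$ swapped.

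For the upper bound I would combine this column observation with an averaging argument around the cyclic index. Since $R_j\cup R_{j+1}$ is independent in $C_m$ and $\alpha(C_m)=\lfloor m/2\rfloor$, we have $|R_j|+|R_{j+1}|\le\lfloor m/2\rfloor$ for every $j$. Summing over all $n$ consecutive pairs counts each $|R_j|$ twice, so $2|S|\le n\lfloor m/2\rfloor$ and hence $|S|\le\lfloor\tfrac n2\lfloor m/2\rfloor\rfloor$; the symmetric argument pairing consecutive rows gives $|S|\le\lfloor\tfrac m2\lfloor n/2\rfloor\rfloor$. I would then select the bound by the parity of $m$. When $m$ is odd, the column bound is exactly the claimed value: $\lfloor\tfrac n2\lfloor m/2\rfloor\rfloor$ equals $\lfloor m/2\rfloor\lfloor n/2\rfloor$ when $n$ is even, and is the both-odd expression when $n$ is odd. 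When $m$ is even, the row bound gives $\tfrac m2\lfloor n/2\rfloor=\lfloor m/2\rfloor\lfloor n/2\rfloor$. Using $m\le n$, in every case the chosen bound matches the stated formula.

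For the lower bound I would exhibit independent sets of the required size. When $m$ or $n$ is even, the product construction suffices: choose maximum independent sets $A\subseteq C_m$ of size $\lfloor m/2\rfloor$ and $B\subseteq C_n$ of size $\lfloor n/2\rfloor$, and set $S=A\times B$. Any two vertices of $S$ either share a chosen row, forcing column distance $\ge 2$, or lie in different chosen rows, forcing row distance $\ge 2$; so $S$ is independent of size $\lfloor m/2\rfloor\lfloor n/2\rfloor$. The genuinely harder case is when $m$ and $n$ are both odd, since there the product construction yields only $\lfloor m/2\rfloor\lfloor n/2\rfloor=\tfrac{(m-1)(n-1)}4$, whereas the target $\lfloor\tfrac n2\lfloor m/2\rfloor\rfloor$ exceeds this by $\lfloor(m-1)/4\rfloor$ (the $5\times 5$ torus already illustrates this, where the maximum is $5$ rather than $4$, achieved by the diagonal set $\{(i,2i):i\in\mathbb Z_5\}$).

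I expect this both-odd construction to be the main obstacle. The approach I would pursue is a diagonal/shifted packing: choose the row-sets $R_j$ so that consecutive pairs stay disjoint with union a maximum independent set of $C_m$, while the pattern rotates as $j$ increases, keeping the column counts $|R_j|$ as close as possible to $\tfrac12\lfloor m/2\rfloor$ on average. Writing $m=2k+1$, a perfect alternation with $|R_j|+|R_{j+1}|=k$ for every $j$ would give $\tfrac{nk}2$, but the odd length $n$ forbids a strictly $2$-periodic pattern, so one must absorb the resulting defect within a bounded region near the cyclic seam. The parity of $k=\lfloor m/2\rfloor$ governs how much is lost there (nothing when $k$ is even, a single vertex when $k$ is odd), which is precisely the origin of the floor in the formula. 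Verifying that such a rotating family exists, is genuinely independent, and loses only the allowed amount—using $n\ge m$ to guarantee enough room—is where the real work lies; the toroidal wraparound in the rows is exactly the feature that makes this delicate and that is absent from the ordinary (non-toroidal) king's graph.
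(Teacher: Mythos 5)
Your upper bound argument is correct and complete: the observation that for consecutive columns the row-sets $R_j$ and $R_{j+1}$ must be disjoint with union independent in $C_m$, the cyclic double-count giving $2|S|\le n\left\lfloor m/2\right\rfloor$, and the parity case analysis all check out, as does the product construction $A\times B$ that settles the lower bound whenever $m$ or $n$ is even. However, there is a genuine gap in the remaining case: when $m$ and $n$ are both odd you never actually construct an independent set of size $\left\lfloor \frac{n}{2}\left\lfloor \frac{m}{2}\right\rfloor\right\rfloor$. You correctly identify that the product construction falls short by $\left\lfloor (m-1)/4\right\rfloor$, and you sketch a ``rotating/diagonal packing'' that would absorb the defect at the cyclic seam, but you explicitly defer its verification (``where the real work lies''). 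That verification \emph{is} the content of the both-odd case: it is precisely where the toroidal wraparound makes the combinatorics delicate, and your own $5\times 5$ example (the classical pentagon configuration underlying the Shannon capacity of $C_5$) already shows the optimum is not a product-type set. A plan for a construction, without a proof that it exists, is independent, and achieves the claimed size, leaves the lemma unproven in its hardest case.

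For comparison, the paper does not fight this battle directly: it observes that for $3\le m\le n$ the graph $K^t_{m\times n}$ is the strong product of the cycles $C_m$ and $C_n$ (your cyclic-distance reformulation of adjacency is exactly the definition of the strong product) and then cites Sonnemann--Krafft, \emph{Independence numbers of product graphs}, where these formulas --- including the both-odd case --- are established; the $m=2$ case is handled by a short direct argument. So your route is more self-contained where it is complete (your upper bound in particular is a clean elementary argument that the paper delegates to the citation), but to finish you would need either to carry out the both-odd construction in full or, as the paper does, to invoke the known result on the independence number of $C_m\boxtimes C_n$.
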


\begin{proof}

We first consider $K^t_{2\times n}$.  Every vertex in this graph borders both the other vertex in its column and the four vertices in neighboring columns.  Thus, each column has at most one vertex in an independent set, and no two neighboring columns can both have a vertex in an independent set.  Thus the largest possible size of an independent set is $\lfloor n/2\rfloor$.  Such an independent set exists: simply choose a vertex in every other column, rounded down if $n$ is odd.  Thus $\alpha(K^t_{2\times n})=\lfloor n/2\rfloor$, matching or formula.

For $3\leq m\leq n$, we note that $K^t_{m\times n}$ is the strong product\footnote{The strong product of two graphs $G$ and $H$ has vertex set $V(G)\times V(H)$, with two distinct vertices $(u,v)$ and $(u',v')$ adjacent precisely when either $u=u'$ or $u$ is adjacent to $u'$ in $G$, and either $v=v'$ or $v$ is adjacent to $v'$ in $H$.} of the cycle graphs $C_m$ and $C_n$.  The formulas for this strong product are computed in \cite[\S 4]{independence_product_graphs}, and match our claim.
\end{proof}

\begin{theorem}\label{theorem:2xn_toroidal_kings}
The gonality of a $2\times n$ toroidal king's graph is given by
\[\gon(K^t_{2\times n})=\begin{cases} 2n-1\textrm{ if }n\in\{2,3\}
\\ 6\textrm{ if }n= 4
\\ 8\textrm{ if }n\geq 5.
\end{cases}\]
\end{theorem}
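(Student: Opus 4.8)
The plan is to split on $n$ exactly where the combinatorics of $K^t_{2\times n}$ changes. First I would pin down the local structure: since $m=2$ the two rows are automatically adjacent modulo $2$, so $v_{i_1,j_1}$ and $v_{i_2,j_2}$ are adjacent precisely when $j_1-j_2\equiv -1,0,1 \pmod n$. For $n\geq 3$ the three relevant columns are distinct, so every vertex has valence $5$ (the other vertex in its column, plus both vertices in each neighbouring column), and the columns form a cycle in which consecutive columns are joined by exactly $4$ edges (two horizontal, two diagonal). For $n\in\{2,3\}$ the neighbouring columns collapse and the graph becomes complete: $K^t_{2\times 2}=K_4$ and $K^t_{2\times 3}=K_6$, so Lemma \ref{lemma:common_graph_gonalities}(iii) gives $\gon=|V|-1=2n-1$. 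For $n=4$ the graph is simple, connected and $5$-regular on $8$ vertices with $5=\lfloor 8/2\rfloor+1$, so Lemma \ref{lemma:high_valence} applies and, using $\alpha(K^t_{2\times 4})=2$ from Lemma \ref{lemma:toroidal_kings_independence}, yields $\gon=8-2=6$.

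For $n\geq 5$ I would establish the upper bound $\gon\leq 8$ with an explicit degree-$8$ divisor $D$ placing $2$ chips on each of the four vertices of two adjacent columns. The key computation is that set-firing a contiguous arc of columns cancels all interior flow and pushes exactly $2$ chips onto each vertex of the single column flanking each end of the arc. Thus firing the arc spanned by the two current walls turns $D$ into two two-chip walls that march outward in opposite directions around the column-cycle, each intermediate divisor remaining effective because every wall vertex holds precisely the two chips needed to fire across its two outgoing edges. I would then check that the two walls sweep across every column before colliding on the far side (merging into a single four-chip wall), so that every vertex carries a chip in some divisor equivalent to $D$; hence $D$ has positive rank and $\gon(K^t_{2\times n})\leq 8$. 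A short coverage count for $n$ even versus odd confirms no column is skipped.

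For the matching lower bound when $n\geq 5$ I would take the scramble $\mathcal{S}$ whose eggs are all adjacent vertex pairs (i.e.\ the edges of the graph, each connected). A hitting set for $\mathcal{S}$ is exactly a vertex cover, so $h(\mathcal{S})$ equals the minimum vertex cover, which in any graph is $|V|-\alpha=2n-\lfloor n/2\rfloor\geq 8$ for $n\geq 5$. For the egg-cut number, isolating the two vertices of one egg costs $5+5-2=8$ edges, giving $e(\mathcal{S})\leq 8$; combined with $\sn\leq\gon$ this forces $\|\mathcal{S}\|=\min\{h,e\}=8$ once $e(\mathcal{S})\geq 8$ is verified, so $8\leq\sn\leq\gon\leq 8$ and the theorem follows (incidentally showing $\sn=\gon$ throughout). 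I expect the egg-cut lower bound to be the main obstacle: I would prove that any vertex set $C$ with $2\leq|C|\leq|V|-2$ that contains an edge has edge-boundary $5|C|-2|E(C)|\geq 8$, equivalently that between any two edge-eggs there exist $8$ pairwise edge-disjoint paths—four routed clockwise and four counterclockwise around the column-cycle, each consecutive $4$-edge slice sustaining one unit of flow in each direction. Controlling the non-arc sets (those that split a column) so that they never drop the boundary below $8$ is the delicate part, and is where a brief exchange/shifting argument, rather than any routine computation, is needed.
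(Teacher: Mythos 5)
Your handling of $n\in\{2,3\}$ (complete graphs, Lemma \ref{lemma:common_graph_gonalities}(iii)) and of $n=4$ (Lemma \ref{lemma:high_valence} together with $\alpha(K^t_{2\times 4})=2$ from Lemma \ref{lemma:toroidal_kings_independence}) is exactly the paper's, and your upper bound for $n\geq 5$ is the paper's argument in a mildly different costume: the paper places $4$ chips on each vertex of a single column and fires growing arcs of columns, which after the first move produces precisely your two ``two-chip walls,'' and both divisors sweep the cycle of columns in the same way. Your lower bound also uses the same scramble (eggs $=$ adjacent pairs, i.e.\ the $2$-uniform scramble) and the same hitting-number identity $h=|V|-\alpha$. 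The genuine divergence is the egg-cut bound. The paper never proves an inequality for general $n$: it establishes $e\geq 8$ only on the $10$-vertex graph $K^t_{2\times 5}$, where the smaller side $G_1$ of any egg-cut has $n_1\in\{2,3,4,5\}$ vertices and a four-case count of interior edges gives $5n_1-2|E(G_1)|\geq 8$ in each case, and then it invokes subgraph monotonicity of scramble number \cite[Proposition 4.5]{new_lower_bound} to conclude $\sn(K^t_{2\times n})\geq\sn(K^t_{2\times 5})\geq 8$ for all $n\geq 5$. That trick converts the unbounded case analysis you are facing into a finite check.

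By contrast, your proposal defers exactly the step that carries the content of the lower bound: the inequality $5|C|-2|E(C)|\geq 8$ for every $C$ containing an edge with $2\leq|C|\leq|V|-2$ is asserted, with the phrase ``exchange/shifting argument'' standing in for a proof. The claim is true, and it can be closed without delicacy: write $x_i=|C\cap\mathrm{col}_i|\in\{0,1,2\}$ and check that the edge boundary of $C$ equals $\#\{i:x_i=1\}+\sum_i\left(2x_i+2x_{i+1}-2x_ix_{i+1}\right)$, where the gap term is $4$ across a $0$--$2$ transition, $2$ across any consecutive pair involving a $1$, and $0$ across $0$--$0$ or $2$--$2$. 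Summing over maximal constant arcs of the cyclic profile $(x_i)$ shows the minimum subject to your constraints is $8$ (attained by unions of consecutive full columns and by complements of two adjacent vertices), the only profiles dipping below $8$ being a single vertex and the complement of a single vertex, both of which your size restriction excludes. So your route does close, but as written the crux of the lower bound is a placeholder; either supply a computation like the one above or adopt the paper's reduction to the finite graph $K^t_{2\times 5}$.
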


\begin{proof} For $n\in\{2,3\}$, we have that $K^t_{2\times n}$ is the complete graph on $2n$ vertices, and so has gonality $2n-1$ by Lemma \ref{lemma:common_graph_gonalities}(iii).  We note that every vertex in $K^t_{2\times 4}$ has valence $5$, which is strictly more than half the number of vertices.  This lets us apply Lemma \ref{lemma:high_valence} to conclude that $\gon(K^t_{2\times4})=|V(K^t_{2\times4})|-\alpha(K^t_{2\times 4})=8-2=6$.

For $n\geq 5$, we find an upper bound of $8$ on gonality by exhibiting a positive rank divisor of degree $8$.  Choose any column in $K^t_{2\times n}$, and place $4$ chips on each vertex.  Firing both vertices in this column moves $2$ chips to each of the vertices in the adjacent columns.  Then firing the original column together with the two adjacent columns moves $2$ chips to each vertex in the next columns out.  Repeating this process by firing larger and larger sets of columns eventually moves chips throughout the whole graph, so this divisor has positive rank, and $\gon(K^t_{2\times n})\leq 8$.

For our lower bound, we first consider the $2$-uniform scramble $\mathcal{E}_2$ on $K^t_{2\times 5}$, whose eggs are all pairs of adjacent vertices.  By \cite[Lemma 3.2]{uniform_scrambles}, we have $h(K^t_{2\times 5})=|V(K^t_{2\times 5})|-\alpha(K^t_{2\times 5})=10-2=8$.  Let $T\subset E(K^t_{2\times 5})$ be an egg-cut, so that $K^t_{2\times 5}-T$ has two components $G_1$ and $G_2$, say with $n_1$ and $n_2$ vertices, where without loss of generality $2\leq n_1\leq n_2$.  If $n_1=2$, then since each vertex has valence $5$ we know there are at least $2\cdot 5-2\cdot 1=8$ edges in $T$, where the $-2$ comes from the double-counted edge interior to $G_1$.  If $n_1=3$, then there are at most $3$ edges interior to $G_1$, giving at least $3\cdot 5-2\cdot 3=9$ edges in $T$; and similarly if $n_1=4$ we find at least $4\cdot 5-2\cdot 6=8$ edges in $T$. We observe that in any connected five vertex subgraph of $K^t_{2\times 5}$, we can find at least two pairs of vertices (possibly overlapping) that are not in cyclically adjacent columns, meaning that such a subgraph has at most $8$ edges.  Thus if $n_1=5$,  there are at most $8$ edges interior to $G_1$, yielding at least $5\cdot 5 - 2\cdot 8=9$ edges in $T$.  In all cases, $|T|\geq 8$.  As this was an arbitrary egg-cut, we have that $e(\mathcal{E}_2)\geq 8$, so $\sn(K^t_{2\times 5})\geq ||\mathcal{E}_2||=8$.  As scramble number is subgraph monotone, $\sn(K^t_{2\times n})\geq 8$ for all $n\geq 5$.  This provides our desired lower bound on $\gon(K^t_{2\times n})$.
\end{proof}

We now present our most general result for toroidal king's graphs.

\begin{theorem}
For $m\geq 3$, we have $\gon(K_{m\times n}^t) \leq 6m$, with equality for $n\geq 6m$.    
\end{theorem}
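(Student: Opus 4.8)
The plan is to prove the upper and lower bounds separately, mirroring the structure of the proof of Theorem \ref{theorem:gonality_kings} but accounting for the two-way connectivity created by the torus. Throughout I use that $K^t_{m\times n}$ is $8$-regular for $m\geq 3$.

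For the upper bound $\gon(K^t_{m\times n})\leq 6m$, I would exhibit an explicit positive rank divisor of degree $6m$: place $6$ chips on each of the $m$ vertices of a single column. The key computation is the effect of set-firing a contiguous block of columns. Since each vertex $(i,j)$ has exactly two neighbors in its own column (the cyclically adjacent rows) and three neighbors in each of the two neighboring columns, set-firing a block of consecutive columns leaves every interior column unchanged and removes exactly $3$ chips from each vertex of the two boundary columns, depositing them on the two adjacent outside columns. Starting from $6$ chips on column $1$, firing the single block $\{1\}$ splits these into two ``fronts'' of $3$ chips, one moving left to column $n$ and one moving right to column $2$; firing successively larger symmetric blocks then pushes the two fronts apart around the torus until they meet on the far side. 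Every column receives (at least) $3$ chips per vertex at some stage of this sweep while the divisor stays effective, so the divisor has positive rank and $\gon(K^t_{m\times n})\leq 6m$.

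For the lower bound when $n\geq 6m$, I would use the scramble $\mathcal{S}$ whose eggs are the $n$ columns. These are disjoint, so $h(\mathcal{S})=n\geq 6m$. For the egg-cut number, deleting the $3m$ edges between one pair of adjacent columns together with the $3m$ edges between a second pair cuts the torus into two arcs, each containing a column, so $e(\mathcal{S})\leq 6m$. To show $e(\mathcal{S})\geq 6m$, which is the heart of the argument, I would produce $6m$ pairwise edge-disjoint paths between any two columns. Because the rows wrap around, within a single arc of the torus between the two columns one finds $3m$ such paths: $m$ purely horizontal (one per row), together with $m$ more in each of the two diagonal directions, where the row-wrapping lets all $m$ rows begin a diagonal path (in contrast to the $2m-2$ diagonal paths of the non-toroidal case in Theorem \ref{theorem:gonality_kings}). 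Routing $3m$ paths through each of the two disjoint arcs gives $6m$ edge-disjoint paths in total, so any egg-cut separating the two columns must use at least $6m$ edges. Hence $e(\mathcal{S})=6m$ and $||\mathcal{S}||=\min\{n,6m\}=6m$, giving $\gon(K^t_{m\times n})\geq \sn(K^t_{m\times n})\geq 6m$ and, with the upper bound, equality.

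The main obstacle is the edge-disjoint path construction underlying $e(\mathcal{S})\geq 6m$: one must route $3m$ paths through each arc simultaneously without edge collisions, and verify there is no internal bottleneck cheaper than $3m$ (every pair of adjacent columns is joined by exactly $3m$ edges, so the minimum cut within an arc is $3m$, but this must be argued cleanly alongside the explicit diagonal routing). Care is also needed at the boundary cases, namely adjacent eggs, where one arc degenerates to a single block of $3m$ edges, and the stage of the sweep where the two fronts collide at the antipodal column. These are routine once the generic picture is established, and the hypothesis $n\geq 6m$ leaves ample room to lay down all the paths.
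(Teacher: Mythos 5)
Your proposal is correct and follows essentially the same route as the paper: the same degree-$6m$ divisor (six chips per vertex on one column) swept across the torus by firing growing blocks of columns, and the same column scramble with $h(\mathcal{S})=n$ and $6m$ pairwise edge-disjoint paths ($m$ horizontal plus $2m$ zigzag diagonals in each of the two directions around the torus, made possible by row-wrapping) to force $e(\mathcal{S})\geq 6m$. The ``internal bottleneck'' worry you raise is already handled by the standard argument that any egg-cut must meet each of the $6m$ edge-disjoint paths, so no separate cut analysis within an arc is needed.
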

\begin{proof}
As with king's graphs in the previous section, we show the upper bound through positive rank divisor and the lower bound through scramble number. 

Choose a column in the graph and place $6$ chips on each vertex, for a total of $6m$ chips; this divisor is illustrated on the left in Figure \ref{fig:tor-king}.  Firing all vertices in this column moves $3$ chips to each of the vertices in the adjacent columns, as illustrated on the right.  Then firing the original column together with the two adjacent columns moves $3$ chips to each vertex in the next columns out.  Repeating this process by firing larger and larger sets of columns eventually moves chips throughout the whole graph, so this divisor has positive rank, and $\gon(K^t_{m\times n})\leq 6m$.

%First, choose a column in the graph and consider all edges incident to the vertices in that column (see Figure \ref{fig:tor-king}). Since $m \geq 3$, each vertex has degree 8, with two of its outgoing edges connecting to vertices in the same column, and three connecting to each of its neighboring columns. Now, imagine placing 6 chips on each vertex in that column, and then set-firing the column. Each vertex sends 3 chips to each neighboring column: one directly to the left or right, and two across diagonals. By symmetry, the resulting chip placement has no chips remaining on the initial column, and 3 chips on each vertex of each neighboring column. Set firing these three columns results in all the chips moving to the next two neighboring columns, again putting three chips on each vertex. Thus we have a toroidal self-replicator, which is in fact perfect: after enough rounds of set-firing the two columns that have chips and all columns between them, we obtain either a column with six chips on each vertex again, or two adjacent columns with three chips on each. 

Now assume $n\geq 6m$, and consider the scramble $\mathcal{S}$ on $K^t_{m\times n}$ whose eggs are precisely the columns.  Since the $n$ eggs are disjoint, the hitting number is immediately computed as $h(\mathcal{S})=n$.  We claim that $e(\mathcal{S})=3m$. Since the set of edges incident to a single column forms an egg-cut, we have $e(\mathcal{S})\leq 6m$. To argue that $e(\mathcal{S})\geq 3m$, we find $6m$ pairwise edge-disjoint paths between an arbitrary pair of eggs.  Starting from one egg, travel to the left from any vertex horizontally, or zigzagging between $(1,1)$ and $(1,-1)$ moves (starting with either) until the other egg is reached.  This gives $3m$ paths.  Another $3m$ paths can be found by travelling to the right.  Any egg-cut separating those two eggs must include an edge from each of the $6m$ paths.  Since we chose an arbitrary pair of eggs, we have $e(\mathcal{S})\geq 6m $, and thus $e(\mathcal{S})=6m$.  This gives us
\[6m=\min\{n,6m\}=||\mathcal{S}||\leq \sn(K^t_{m\times n})\leq \gon(K^t_{m\times n})\leq 6m,\]
implying $\gon(K^t_{m\times n})=6m$.

  %  For equality of gonality for sufficiently high $n$, consider a similar set of eggs as in the non-toroidal king's graph: let each column be an egg. The hitting number is $n\geq 6m$ and the egg cut number is, as above, equally high. Note that to separate the $j$th and $k$th columns, with $j < k$, we need to break all paths passing either through columns $j+1,\cdots, k-1$ \textit{and} all paths through the columns $k+1,\cdots, n, 1,\cdots j-1$.

    \begin{figure}[hbt]
        \centering
        \includegraphics[scale=0.8]{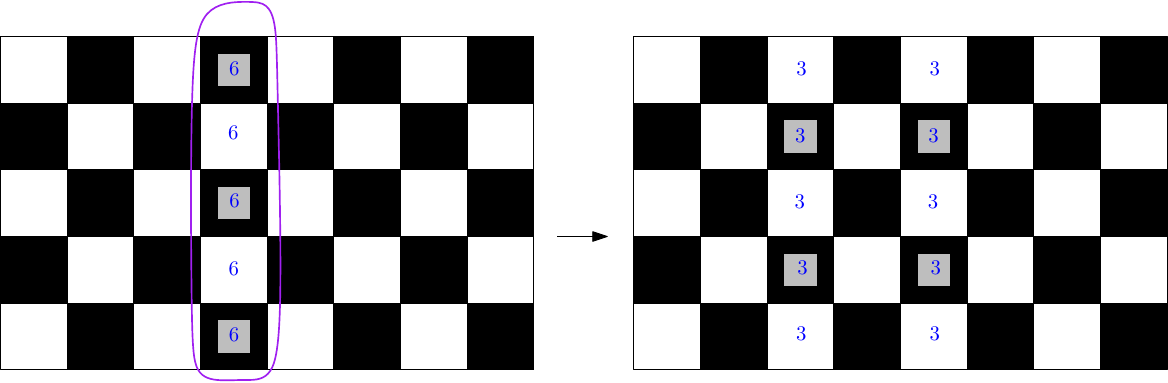}
        \caption{A positive rank divisor on a portion of a $5\times n$ toroidal king's graph. Firing the circled subset on the first divisor yields the second divisor.} 
        \label{fig:tor-king}
    \end{figure}
\end{proof}

We remark that we do not have $\gon(K^t_{m\times n})=6m$ for every choice of $m$ and $n$ with $3\leq m \leq n$.  For example, $K^t_{3\times 3}$ is a complete graph on $9$ vertices, and so has gonality $8$.  We can also compute $K^t_{3\times 4}$ and $K^t_{3\times 5}$ as each vertex has valence strictly larger than half the number of vertices. 
 This implies by Lemma \ref{lemma:high_valence} that $\gon(K^t_{3\times 4})=12-\alpha(K^t_{3\times 4})=10$ and $\gon(K^t_{3\times 5})=15-\alpha(K^t_{3\times 5})=13$. 
 More generally, the upper bound of
\[\gon(K^t_{m\times n})\leq mn-\alpha(K^t_{m\times n})=\begin{cases}mn-
\left\lfloor\frac{m}{2}\right\rfloor \left\lfloor \frac{n}{2}\right\rfloor\textrm{ if $m$ or $n$ is even}
\\ mn-\left \lfloor \frac{n}{2}\left\lfloor \frac{m}{2}\right\rfloor\right\rfloor\textrm{ if $m$ and $n$ are both odd}\end{cases}\]
gives a better upper bound than $6m$ when $m\leq n\leq 7$.

\section{Bishop's Graphs}
\label{section:bishops_graphs}

We now turn to bishop's graphs.
For $n,m\geq 2$, note that $B_{m\times n}$ is disconnected, with exactly two components: one representing white tiles on a chess board, and the other representing black tiles. We denote these two components $B_{m\times n}^w$ and $B_{m\times n}^b$, and take $B_{m\times n}^w$ to contain the vertex $(1,1)$. We remind the reader that for disconnected graphs, gonality can be computed as the sum of the gonalities of the disconnected components, so $\gon(B_{m\times n})=\gon(B_{m\times n}^w)+\gon(B_{m\times n}^b)$.  Since $B_{2\times in}$ consists of two path graphs, each with gonality $1$ by Lemma \ref{lemma:common_graph_gonalities}, we have $\gon(B_{2\times n})=2$.

Our general upper bound on $\gon(B_{m\times n})$ with $3\leq m\leq n$ will depend solely on $m$.  Before stating this result, we consider $3\times n$,  $4\times n$, and $5\times n$ bishop's graphs.  This will allow us to build up a general strategy for the $m\times n$ case.

\begin{example}

Consider the divisor $D$ on $B_{3\times n}$ illustrated on the top left in Figure \ref{figure:bishops_constant}.  Firing the first two columns transforms $D$ into $D'$, illustrated to the right.  Note that $D'$ appears identical to $D$, except with each chip shifted to the right by one unit.  Firing the first three columns, then the first four, and so on continues this pattern, translating the chips across the graph until there are chips in the final column.  This places chips on almost every vertex, with two omitted, namely $(2,1)$ and $(2,n)$.  These can still be covered:  $(2,n)$ by repeating our ``fire the first $k$'' columns strategy, and $(2,1)$ by symmetry.  Thus this divisor has positive rank, regardless of the value of $n$.  It follows that $\gon(B_{3\times n})\leq 12$ for all $n$.

\begin{figure}[hbt]
    \centering
    \includegraphics[scale=0.7]{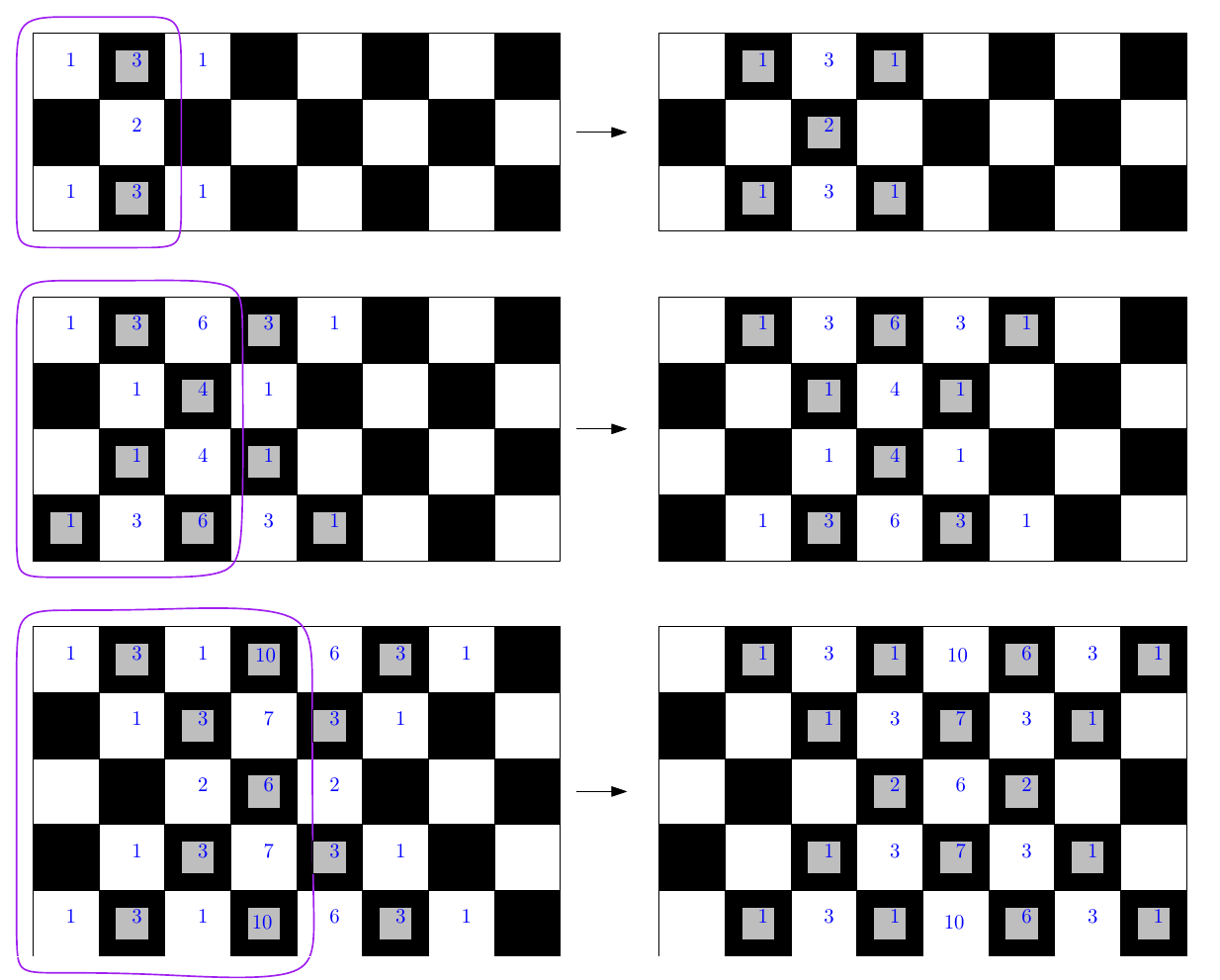}
    \caption{Divisors on $B_{3\times n}$, $B_{4\times n}$, and $B_{5\times n}$.  Regardless of how large $n$ is, these divisors have positive rank.}
    \label{figure:bishops_constant}
\end{figure}

Divisors are also illustrated in Figure \ref{figure:bishops_constant} for $B_{m\times n}$ with $m=4$ and $m=5$.  Firing the first $m-1$ columns translates the divisor horizontally as illustrated, and we can once again use this to argue that the divisor will have positive rank regardless of the value of $n$.  Summing up the number of chips, we find $\gon(B_{4\times n})\leq 40$ and $\gon(B_{5\times n})\leq 100$.

Although these chip placements are clearly suboptimal for small values of $n$, the key takeaway is that there is a universal bound for each choice of $m\in\{3,4,5\}$ that is completely independent of $n$. We will generalize this to all $m\geq 3$ in the following theorem, but first we explain intuitively how to build these divisors.  Chips are placed in the first $m-1$ columns, with a mirror image of the first $m-2$ columns in columns $m$ through $2m-3$.  To determine how many chips a vertex $(i,j)$ in the first $m-1$ columns should receive, we determine how many chips it will lose as we fire the first $m-1$ columns, then the first $m$ columns, and so on.  For instance, the vertex $(2,4)$ in $B_{5\times n}$ will lose $1$ chip up and to the right, and $3$ chips down and to the right on the first firing move; then $2$ chips down and to the right on the second firing move; then $1$ chip down and to the right on the third firing move, for a total of $1+(3+2+1)=7$ chips.  Due to the structure of the graph, this number will always be the sum of two triangular numbers $T_k:=\sum_{\ell=1}^k \ell$; in our example, we have $D(2,4)=T_1+T_3$.  Choosing this many chips ensures that no vertex will go into debt through our firing process; and as we will see in the proof of the following theorem, the recursive nature of triangular numbers leads to the translation patterns we have observed thus far.
\end{example}

Recall that $T_k=\frac{k(k+1)}{2}$ for $k\geq 1$; by convention, we take $T_k=0$ for $k\leq 0$.  Note that triangular numbers satisfy the relation $T_{k-1}+\max\{0,k\}=T_k$ for any integer $k$.

\begin{theorem}\label{theorem:bishops_general}
    For $3\leq m\leq n$, we have $\gon(B_{m\times n})\leq \frac{m^2(m^2-1)}{6}$.
\end{theorem}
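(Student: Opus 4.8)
The plan is to generalize the explicit divisor construction illustrated in the preceding example to arbitrary $m\geq 3$, and to verify that it has positive rank with degree exactly $\frac{m^2(m^2-1)}{6}$. Following the intuition laid out before the theorem, I would place chips only on the first $m-1$ columns together with a mirror image of the first $m-2$ columns in columns $m$ through $2m-3$ (for $n$ large; for smaller $n$ the placement is truncated, but the example shows such suboptimal placements still suffice). The key definition is to set $D(i,j)$ on a vertex of the first $m-1$ columns equal to the total number of chips it would shed as we successively set-fire the first $m-1$ columns, then the first $m$ columns, and so on. As observed in the example, this count is a sum of two triangular numbers: one triangular number for the chips flowing along the $(1,1)$-diagonal and one for the chips flowing along the $(1,-1)$-diagonal, with the index of each triangular number determined by how far the vertex sits from the relevant edge of the board.

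The central computation is to make the formula $D(i,j)=T_a+T_b$ precise, where $a$ and $b$ are the distances from $(i,j)$ to the two boundaries reachable along the two diagonal directions, and then to prove two things. \textbf{First}, that this divisor is effective and that each set-firing move (fire the first $k$ columns, for increasing $k$) keeps every vertex out of debt; this is where the recursion $T_{k-1}+\max\{0,k\}=T_k$ does the work, since after firing, the chip count on each affected vertex decreases by exactly one triangular-number index, reproducing a translated copy of the original configuration. \textbf{Second}, that after translating the chips all the way across the board I can reach every vertex, including the two exceptional vertices (the analogues of $(2,1)$ and $(2,n)$ in the $m=3$ case) that are temporarily uncovered, which follows by re-running the column-firing process and by the left-right symmetry of the construction. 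Together these establish that $D$ has positive rank.

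The final step is the degree count. I would sum $D(i,j)=T_a+T_b$ over all vertices carrying chips and show the total equals $\frac{m^2(m^2-1)}{6}$. The cleanest route is to reorganize the sum by diagonals: each diagonal of the board contributes the triangular numbers $T_0,T_1,\dots$ up to some bound determined by the diagonal's length, so the grand total becomes a sum of consecutive partial sums of triangular numbers, i.e. a sum of tetrahedral numbers. Using $\sum_{k=0}^{\ell}T_k=\binom{\ell+2}{3}$ and then summing these over all diagonals should telescope to the claimed closed form; a useful sanity check is that the formula evaluates to $12$, $40$, and $100$ for $m=3,4,5$, matching the degrees computed in the example.

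I expect the main obstacle to be the bookkeeping in the degree computation rather than the positive-rank argument. Precisely indexing which triangular numbers appear on which vertices, accounting for the mirrored columns without double-counting, and verifying that the nested summation collapses to $\frac{m^2(m^2-1)}{6}=\frac{1}{6}(m-1)m^2(m+1)$ requires careful handling of the boundary diagonals where the triangular-number indices are clipped. The positive-rank verification, by contrast, reduces almost mechanically to the single triangular-number recursion once the translation-invariance of the configuration under the column-firing moves is set up correctly.
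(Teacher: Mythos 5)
Your proposal follows essentially the same route as the paper's proof: the same divisor $D(i,j)=T_{j-i+1}+T_{i+j-m}$ (with mirrored columns), the same translation-by-column-firing argument driven by the recursion $T_{k-1}+\max\{0,k\}=T_k$, the same handling of the leftover wedge vertices by symmetry, and the same degree count (the paper organizes the sum by rows rather than by diagonals, but this is only a bookkeeping difference). The one small deviation is the case $n<2m-3$, which the paper dispatches by placing a single chip on every vertex rather than truncating the construction, but this does not change the substance of the argument.
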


\begin{proof}    Without loss of generality, we may assume that $n\geq 2m-3$; otherwise we may use the divisor that places one chip on every vertex, which has degree smaller than $\frac{m^2(m^2-1)}{6}$.  This ensures we will have enough horizontal space to build our divisor.

Define $D(i,j)$ on $B_{m\times n}$ by
\[\begin{cases} D(i,j)=T_{j-i+1}+T_{i+j-m}&\text{ for } j\leq m-1\\
D(i,j)=D(i,2(m-1)-j)&\text{ for }j\geq m.\end{cases}
\]

We first show that $D$ has positive rank.  Consider firing the first $m-1$ columns of the graph, transforming $D$ into $D'$.  If $j\leq m-1$, then the number of chips the vertex $(i,j)$ loses up and to the right is $\max\{0,i+j-m\}$, and the number of chips it loses down and to the left is $\max\{0,j-i+1\}$.  In other words, the number of chips it loses is $\max\{0,i+j-m\}+\max\{0,j-i+1\}$.  This means that
\begin{align*}
   D'(i,j)=& D(i,j)-\max\{0,i+j-m\}-\max\{0,j-i+1\}
   \\=& T_{j-i+1}+T_{i+j-m}-\max\{0,i+j-m\}-\max\{0,j-i+1\}
    \\=& (T_{j-i+1}-\max\{0,j-i+1)+(T_{i+j-m}-\max\{0,i+j-m\})\}
   \\=&T_{j-i}+T_{i+j-m-1}
   \\=&D(i,j-1)
\end{align*} for $i\leq m-1$.  If $j=m$, the vertex $(i,m)$ starts with $T_{m-i-1}+T_{i-2}$ chips and gains $m-1$ chips, meaning $D'(i,m)=T_{m-i-1}+T_{i-2}+(m-1)=T_{m-i-1}+(m-i)+T_{i-2}+(i-1)=T_{m-i}+T_{i-1}=D(i,m-1)$.
Finally, for $j> m$, the vertex $(i,j)$ will gain a number of chips equal to $\max\{0,2m-i-j\}+\max\{0,m+i-j-1\}$.  It follows that 
\begin{align*}
   D'(i,j)=& D(i,j)+\max\{0,2m-i-j\}+\max\{0,m+i-j-1\}
   \\=& D(i,2(m-1)-j)+\max\{0,2m-i-j\}+\max\{0,m+i-j-1\}
   \\=& T_{2(m-1)-j-i+1}+T_{i+2(m-1)-j-m}+\max\{0,2m-i-j\}+\max\{0,m+i-j-1\}
   \\=&(T_{2m-j-i-1}+\max\{0,2m-i-j\})+(T_{m+i-j-2}+\max\{0,m+i-j-1\})
   \\=& T_{2m-j-i}+T_{m+i-j-1}
   \\=& D(i,2(m-1)-(j-1))
   \\=&D(i,j-1).
\end{align*} 
In all cases, we have $D'(i,j)=D(i,j-1)$.

Thus firing the first $m-1$ columns has the net effect of translating the divisor one unit to the right. Firing larger and larger subsets of columns repeats this process, translating the divisor along the whole graph. This will eventually place a chip on every vertex, with the exception of two triangular wedges, one on the left and one on the right. However, continuing the process of firing larger and larger sets of columns will eventually place chips on all the vertices in the right wedge; and a symmetric argument handles the vertices in the left wedge. Thus the divisor $D$ has positive rank, as desired.

We now compute $\deg(D)$. In the $i^{th}$ row, the term $T_{j-i+1}$ yields the nonzero triangular numbers \[T_1,\ldots,T_{m-i-1},T_{m-i},T_{m-i-1},\ldots T_1,\] while the term $T_{i+j-m}$ yields the nonzero triangular numbers \[T_1,\ldots,T_{i-2},T_{i-1},T_{i-2},\ldots,T_1.\] It follows that the total number of chips in the $i^{th}$ row is
\[\sum_{k=1}^{m-i} T_k+\sum_{k=1}^{m-i-1} T_k+\sum_{k=1}^{i-1} T_k+\sum_{k=1}^{i-2} T_k,\]
which can be rewritten as
\[\frac{1}{6}\left[(m-i)(m-i+1)(m-i+2)+(m-i-1)(m-i)(m-i+1)+(i-1)i(i+1)+(i-2)(i-1)i\right].\]
Summing this expression as $i$ goes from $1$ to $m$ yields $\frac{1}{6}m^2(m^2-1)$ for the degree of $D$. Since $D$ has positive rank, we have that $\gon(B_{m\times n})\leq \frac{1}{6}m^2(m^2-1)$, as desired.
\end{proof}

It is natural to ask whether this bound is sharp, or at least of the right order of magnitude in terms of $m$. Unfortunately, the tool of scramble number will not suffice to answer this question, as detailed below.

\begin{proposition}For $n\geq \frac{1}{6}(m-1)m^2(m+1)$ and $m\geq 4$ even, we have $\sn(B^w_{m\times n})=\sn(B^b_{m\times n})=\frac{1}{6}(m-1)m(m+1)$.
\end{proposition}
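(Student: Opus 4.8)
The plan is to prove matching upper and lower bounds on the scramble number of each color-component of the bishop's graph. The upper bound $\sn(B^w_{m\times n})\leq \frac{1}{6}(m-1)m(m+1)$ should follow from screewidth, since $\sn\leq\scw$. The lower bound $\sn(B^w_{m\times n})\geq\frac{1}{6}(m-1)m(m+1)$ should follow by exhibiting an explicit scramble of this order. Note that $\frac{1}{6}(m-1)m(m+1)$ is exactly one of the two summands appearing in the row-sum decomposition of $\deg(D)$ in the proof of Theorem \ref{theorem:bishops_general}: when $m$ is even, the white and black components are isomorphic (by a reflection of the board), which explains why the two components have equal scramble number and why their sum $\frac{1}{3}(m-1)m(m+1)$ matches the lower bound on $\gon(B_{m\times n})$ quoted in Theorem \ref{theorem:summary_of_bounds}.

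For the \textbf{lower bound}, I would first understand the structure of $B^w_{m\times n}$ as $n\to\infty$. The bishop's moves decompose the board into diagonals of two slopes; within one color class, the ``positive'' diagonals and ``negative'' diagonals cross. For $m$ even and $n$ large, the relevant substructure is a large ``rectangular'' region where each positive diagonal meets each negative diagonal. I would design a scramble $\mathcal{S}$ whose eggs are chosen so that (i) the hitting number is forced to be at least $\frac{1}{6}(m-1)m(m+1)$ because the eggs are numerous and suitably disjoint, and (ii) the egg-cut number is at least this same value because between any two eggs one can route that many pairwise edge-disjoint paths along the diagonals. A natural candidate is a scramble whose eggs are single diagonals (or short diagonal segments), so that disjointness makes $h(\mathcal{S})$ large; the edge-connectivity between two diagonals, counted via the number of crossing points, should yield the matching egg-cut bound. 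Verifying that $\min\{h(\mathcal{S}),e(\mathcal{S})\}$ equals $\frac{1}{6}(m-1)m(m+1)$ is where the combinatorics of diagonal crossings must be pinned down precisely, and the constant $\frac{1}{6}(m-1)m(m+1)=\sum_{k=1}^{m-1}T_k$ strongly suggests the eggs and cuts should be organized row-by-row with triangular-number weights matching the proof of Theorem \ref{theorem:bishops_general}.

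For the \textbf{upper bound}, I would construct a tree-cut decomposition of $B^w_{m\times n}$ of width exactly $\frac{1}{6}(m-1)m(m+1)$, using the fact that $\sn\leq\scw$. Since the graph is essentially a product-like structure that is long in the $n$-direction, a natural decomposition uses a path-like tree $T$ that sweeps across the columns, with each node holding a bounded ``slice'' of the board. The width is then governed by the link adhesions, i.e. the number of bishop-edges crossing from one slice to the next. The key calculation is to show that the number of edges crossing a vertical cut between consecutive groups of columns is exactly $\frac{1}{6}(m-1)m(m+1)$, matching the degree computation (restricted to one color) in Theorem \ref{theorem:bishops_general}; this is plausible because the divisor $D$ there was built precisely so that firing the first $m-1$ columns sends exactly $\frac{1}{6}(m-1)m(m+1)$ chips across the cut on each color class.

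I expect the \textbf{main obstacle} to be the lower bound, specifically verifying the egg-cut number $e(\mathcal{S})\geq\frac{1}{6}(m-1)m(m+1)$. Computing the hitting number from disjoint eggs is routine, and the screewidth upper bound reduces to an edge-count, but lower-bounding $e(\mathcal{S})$ requires producing, between \emph{every} pair of eggs, the full complement of pairwise edge-disjoint paths through the diagonal structure of the bishop's graph. The diagonals have varying lengths (shorter near the corners of the board), so one must either restrict attention to a region of the board where all the needed diagonals are long enough, or carefully account for the shortened diagonals; the hypothesis $n\geq\frac{1}{6}(m-1)m^2(m+1)$ is presumably exactly what guarantees enough horizontal room for all these paths to be routed simultaneously. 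Balancing the path-routing argument so that the egg-cut bound meets the hitting bound exactly, rather than overshooting or undershooting, is the delicate part.
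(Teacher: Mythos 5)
Your overall architecture (disjoint eggs to control the hitting number, pairwise edge-disjoint paths to lower-bound the egg-cut number, and a path-like tree-cut decomposition sweeping across columns for the upper bound) is the same as the paper's, and your upper-bound plan is essentially correct as stated. However, your lower bound has a genuine gap: eggs that are single diagonals (or short diagonal segments) provably cannot yield a scramble of the required order. When the eggs are disjoint, deleting all edges with exactly one endpoint in a fixed egg is itself an egg-cut, so $e(\mathcal{S})$ is at most the edge-boundary of the smallest egg. In $B^w_{m\times n}$ every vertex has valence at most $2(m-1)$ and a full diagonal is an $m$-clique, so the edge-boundary of a diagonal egg is at most $m(2m-2)-m(m-1)=m(m-1)$, which is $O(m^2)$; but the target value $\frac{1}{6}(m-1)m(m+1)$ is $\Theta(m^3)$, and indeed $m(m-1)<\frac{1}{6}(m-1)m(m+1)$ already for $m\geq 6$. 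Shorter diagonal segments, or diagonals near the corners, only make this worse. No amount of clever path-routing can rescue this choice of eggs: the obstruction is an upper bound on $e(\mathcal{S})$, not a failure to exhibit paths.

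The paper avoids this by making the eggs \emph{thick}: each egg is a block of $m$ consecutive columns (the last block absorbing up to $m-1$ leftover columns), so that $h(\mathcal{S})=\lfloor n/m\rfloor$ and each egg has a large edge-boundary. The edge-disjoint paths are then long zigzags sweeping across the entire graph: for each $k$ with $1\leq k\leq m-1$, one starts at a vertex in the first $k$ columns and alternates $(\mp k,k)$ and $(\pm k,k)$ moves, producing $\sum_{k=1}^{m-1}k(m-k)=\frac{1}{6}(m-1)m(m+1)$ pairwise edge-disjoint paths, each of which meets every egg (since every move advances at most $m-1$ columns and every egg spans $m$ columns); truncating them between any two eggs gives $e(\mathcal{S})\geq\frac{1}{6}(m-1)m(m+1)$. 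One further correction to your reading of the hypotheses: the condition $n\geq\frac{1}{6}(m-1)m^2(m+1)$ is not about having room to route the paths (any two blocks suffice for that); it is exactly what makes the hitting number $\lfloor n/m\rfloor$ at least $\frac{1}{6}(m-1)m(m+1)$, so that the order $\min\{h(\mathcal{S}),e(\mathcal{S})\}$ is not capped by $h(\mathcal{S})$. Your upper-bound calculation then closes the argument just as in the paper: with bags of $m$ columns there are no tunneling edges, each bag has at most $m^2/2$ vertices, every link carries at most one edge from each of the paths above, and $m\geq 4$ gives $\max\left\{\frac{m^2}{2},\frac{1}{6}(m-1)m(m+1)\right\}=\frac{1}{6}(m-1)m(m+1)$.
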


\begin{proof}
    To   show $\sn(B^w_{m\times n})\geq \frac{1}{6}(m-1)m(m+1)$, we consider the scramble $\mathcal{S}$ on  $B^w_{m\times n}$ whose eggs are disjoint and consist of the first $m$ columns, then the next $m$ columns, and so on, with the last egg receiving between $m$ and $2m-1$ columns.  Since the eggs are disjoint, we have $h(\mathcal{S})=|\mathcal{S}|=\lfloor n/m\rfloor$.

    We now construct a collection of pairwise edge-disjoint paths on $B^w_{m\times n}$, which will be used to lower bound $e(\mathcal{S})$.  Our strategy is as follows: choose an integer $k$ with $1\leq k\leq m-1$. Choose a vertex in the first $k$ columns and the first $m-k$ rows of $B^w_{m\times n}$. From this vertex, alternate $(-k,k)$ and $(k,k)$ moves (starting with $(-k,k)$) until no more moves are possible. Note that there are $k(m-k)$ vertices in the first $k$ columns and the first $m-k$ rows, of which $\lceil k(m-k)/2\rceil$ are white. Thus we have constructed
    \[\sum_{k=1}^{m-1}\lceil k(m-k)/2\rceil\]
    ``down-paths'', which are edge-disjoint by construction.  We may similarly construct a collection of ``up-paths'' for each $k$, choosing a vertex in the first $k$ columns and the last $m-k$ rows, alternating between $(k,k)$ and $(-k,k)$ moves (starting with $(k,k)$) until no more moves are possible. A similar analysis gives us  
        \[\sum_{k=1}^{m-1}\lfloor k(m-k)/2\rfloor\]
        ``up-paths''.  Taken together, we have constructed a family of
    \[\sum_{k=1}^{m-1}\left(\lceil k(m-k)/2\rceil+\lfloor k(m-k)/2\rfloor\right)=\sum_{k=1}^{m-1} k(m-k)=\frac{(m-1)m(m+1)}{6}\]
    pairwise edge-disjoint paths on $B^w_{m\times n}$.

    Since the eggs of $\mathcal{S}$ are $m\times m$ subgraphs and all moves go forward by at most $m-1$, every one of our paths intersects every egg.  Thus by truncating paths appropriately, for every pair of eggs we can find \(\frac{(m-1)m(m+1)}{6}\) pairwise edge-disjoint paths connecting them.  It follows that $e(\mathcal{\mathcal{S}})\geq \frac{(m-1)m(m+1)}{6}$.  On the other hand, $e(\mathcal{\mathcal{S}})\leq \frac{(m-1)m(m+1)}{6}$; for instance, this equals the number of edges connecting the first two eggs.

    Taken together, we have $||\mathcal{S}||=\min\{h(\mathcal{S}),e(\mathcal{S})\}=\min\{\lfloor n/m\rfloor,\frac{1}{6}(m-1)m(m+1)\}=\frac{1}{6}(m-1)m(m+1)$.  It follows that $\sn(B^w_{m\times n})\geq \frac{1}{6}(m-1)m(m+1)$.

    For the upper bound we construct a tree-cut decomposition $\mathcal{T}$ on $B^w_{m\times n}$.  Place the first $m$ columns in one bag, then the next $m$, and so on, with the last bag receiving possibly fewer than $m$ columns; connect the bags in a path from left to right.  Each bag has at most $m^2/2$ vertices, and there are no tunneling edges.  The number of edges in each link is at most $\frac{1}{6}(m-1)m(m+1)$: indeed, every edge between a pair of adjacent bags is part of one of the down-paths or up-paths we considered previously. Every path has exactly one edge connecting two adjacent bags with $m$ columns each, so at least one link has exactly $\frac{1}{6}(m-1)m(m+1)$ edges.  Thus the width of the tree-cut decomposition is $\max\{m^2/2,\frac{1}{6}(m-1)m(m+1)\}=\frac{1}{6}(m-1)m(m+1)$, where we use the fact that $m\geq 4$.  This gives us $\sn(B^w_{m\times n})\leq \scw(B^w_{m\times n})\leq w(\mathcal{T})=\frac{1}{6}(m-1)m(m+1)$, implying that $\sn(B^w_{m\times n})=\frac{1}{6}(m-1)m(m+1)$.

    An identical argument shows that $\sn(B^b_{m\times n})=\frac{1}{6}(m-1)m(m+1)$.
\end{proof}

Since $B^w_{(m-1)\times n}$ is a subgraph of $B^w_{m\times n}$ which is a subgraph of $B^w_{(m+1)\times n}$, this immediately gives us the upper and lower bounds of
\[\frac{1}{6}(m-2)(m-1)m\leq \sn(B^w_{m\times n})\leq \frac{1}{6} m(m+1)(m+2)\]
for $m$ odd; a similar bound can be found for $\sn(B^b_{m\times n})$.  
This means that scramble number provides us with an $O(m^3)$ lower bound on $\gon(B_{m\times n})$, as detailed below; however, no stronger lower bound can be proved using scramble number.

\begin{corollary}
    Let $m\geq 4$, $n\geq \frac{1}{6}(m-1)m^3(m+1)$, and $m'=2\lfloor m/2\rfloor$.  We have $\gon(B_{m\times n})\geq \frac{1}{3}(m'-1)m'(m'+1)$.
\end{corollary}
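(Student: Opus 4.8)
The plan is to deduce the bound directly from the preceding proposition together with three general facts recalled earlier: that the gonality of a disconnected graph is the sum of the gonalities of its components, that $\sn(G)\leq\gon(G)$ for every graph, and that scramble number is monotone under taking subgraphs \cite[Proposition 4.5]{new_lower_bound}. Since $B_{m\times n}=B^w_{m\times n}\sqcup B^b_{m\times n}$, I would first write
\[\gon(B_{m\times n})=\gon(B^w_{m\times n})+\gon(B^b_{m\times n})\geq \sn(B^w_{m\times n})+\sn(B^b_{m\times n}),\]
thereby reducing the problem to lower-bounding the scramble number of each color class separately.

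To lower-bound $\sn(B^w_{m\times n})$, I would introduce $m'=2\lfloor m/2\rfloor$, the largest even integer not exceeding $m$, and observe that the subgraph of $B^w_{m\times n}$ induced on the white vertices lying in the first $m'$ rows is exactly $B^w_{m'\times n}$: adjacency of two vertices depends only on whether their coordinate difference is an integer multiple of $(1,1)$ or $(1,-1)$, which is unaffected by deleting the bottom rows. By subgraph monotonicity, $\sn(B^w_{m\times n})\geq\sn(B^w_{m'\times n})$, and likewise for the black class. Now $m'$ is even, and since $m\geq 4$ we have $m'\geq 4$, so the preceding proposition applies to $m'$ as long as $n\geq\frac16(m'-1)(m')^2(m'+1)$. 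This yields $\sn(B^w_{m'\times n})=\sn(B^b_{m'\times n})=\frac16(m'-1)m'(m'+1)$, and substituting into the display above gives
\[\gon(B_{m\times n})\geq 2\cdot\tfrac16(m'-1)m'(m'+1)=\tfrac13(m'-1)m'(m'+1),\]
which is the claim.

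The only point requiring verification is the hypothesis on $n$. Since $m'\leq m$, we have $\frac16(m'-1)(m')^2(m'+1)\leq \frac16(m-1)m^2(m+1)\leq \frac16(m-1)m^3(m+1)$, so the corollary's assumption $n\geq\frac16(m-1)m^3(m+1)$ comfortably guarantees the threshold needed to invoke the proposition for $m'$. This is essentially the whole content: there is no genuine obstacle, only the bookkeeping of aligning the even cutoff $m'$ with an $n$-threshold that works for both parities of $m$ at once. The slack between $m^2$ and $m^3$ in the stated $n$-bound is exactly what absorbs the passage from $m$ to $m'$ (when $m$ is odd, $m'=m-1$) without having to track the threshold tightly, so I would not bother optimizing it.
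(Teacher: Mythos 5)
Your argument is correct and is essentially the paper's own proof: decompose $\gon(B_{m\times n})$ as the sum over the two color components, pass to scramble number, reduce to the even parameter $m'$ via subgraph monotonicity of $\sn$, and invoke the preceding proposition for $m'\geq 4$ even. The only cosmetic difference is that you handle both parities uniformly and verify the $n$-threshold explicitly, whereas the paper splits into the cases $m$ even ($m'=m$, proposition applied directly) and $m$ odd ($m'=m-1$, subgraph argument) and leaves the threshold check implicit.
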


\begin{proof} First assume $m$ is even, so $m'=m$.  Then 
\[\gon(B_{m\times n})=\gon(B^w_{m\times n})+\gon(B^b_{m\times n})\geq \sn(B^w_{m\times n})+\sn(B^b_{m\times n})=\frac{1}{3}(m'-1)m'(m'+1)\]
by the previous result.

If $m$ is odd, then $m'=m-1$. Since $B^w_{m\times n}$ has $B^w_{m'\times n}$ as a subgraph, we know that $\sn(B^w_{m\times n})\geq \sn(B^w_{m'\times n})=\frac{1}{6}(m'-1)m'(m'+1)$; similarly, $\sn(B^b_{m\times n})\geq \frac{1}{6}(m'-1)m'(m'+1)$.  This gives us the claimed lower bound on  $\gon(B_{m\times n})$.
\end{proof}

The last avenue we might take to evaluate the bounds from Theorem \ref{theorem:bishops_general} is from a computational perspective.  This is unfeasible for $m\geq 4$, since the upper bound of $mn-\alpha(B_{m\times n})$ will outperform our bound until $n$ is quite large, since $\alpha(B_{m\times n})\in\{m+n-2,m+n-1\}$ for $m<n$ \cite[Proposition 7.3]{bishops_independence}.  For $m=4$, for instance, this upper bound is at least $4n-(4+n-1)=3n-3$, which outperforms the upper bound of $40$ as long as $n\leq 14$.  The first graph where our result would do better is $B_{4\times 15}$, whose components each have $30$ vertices.  However, for $m=3$ (where we have an upper bound of $12$ on gonality), we find using the Chip-Firing Interface \cite{chip_firing_interface} that $\gon(B_{3\times 10}^w)=\gon(B_{3\times 10}^b)=6$, so $\gon(B_{3\times 10})=12$ (for smaller choices of $n$, the gonality is strictly lower than $12$).  It is worth noting that gonality can be larger for a subgraph than a larger graph (even if both graphs are connected), so this computation does not necessarily imply that $\gon(B_{3\times n})\geq 12$ for $n>10$.

\section{Toroidal Bishop's Graphs}
\label{section:toroidal_bishops_graphs}%or should this be a subsection?
We now consider the structure of $B_{m\times n}^t$, the $m\times n$ toroidal bishop's graph. Following the work in \cite{torBishop}, we classify the diagonals as 
$s$-diagonals and $d$-diagonals, where $d$-diagonals are those that go to the right and down, whereas $s$-diagonals go to the right and up. Several $s$- and $d$-diagonals are illustrated in Figure \ref{fig:s-d-diag}. There are several features that distinguish the behavior of these diagonals on a toroidal versus a non-toroidal bishop's graph:
\begin{itemize}
    \item On a non-toroidal graph, $s$-diagonals and $d$-diagonals have length at most $\min\{m,n\}$; on a toroidal graph, they can be longer.
    \item On a non-torodial graph, an $s$-diagonal and a $d$-diagonal can intersect at most once; on a toroidal graph, they can intersect more than once.
    \item On a non-toroidal graph, diagonals are monochromatic (consisting entirely of black squares, or of white squares); on a toroidal graph, if either $m$ or $n$ is odd, $s$-diagonals and $d$-diagonals have multiple colors.
\end{itemize}
We present a few lemmas to better understand the structure of $B_{m\times n}^t$ in terms of these diagonals.

%On a non-toroidal bishop's graph, if we index the rows from top to bottom and the columns from left to right, all points on the $d$-diagonals have the same difference between their column-coordinates and their row-coordinates, whereas the $s$-diagonals have the same sum. On a toroidal graph, however, diagonals can be longer: they can wrap around the edges of the board and continue until they reach the point they started at (see Figure \ref{fig:s-d-diag}). As a result, on a toroidal bishop's graph, $s$-diagonals and $d$-diagonals can intersect at more than one point. Moreover, diagonals on a traditional chessboard are monochromatic: a bishop is confined to either white squares or black squares on the board. For toroidal graphs, this is the case only if both $m$ and $n$ are even \cite{torBishop}: if there is an odd number of rows, the top and bottom rows are the same color, so ``wrapping" from the bottom row to the top causes the color to change.

\begin{figure}[hbt]
    \centering
    \includegraphics[width = 0.7\textwidth]{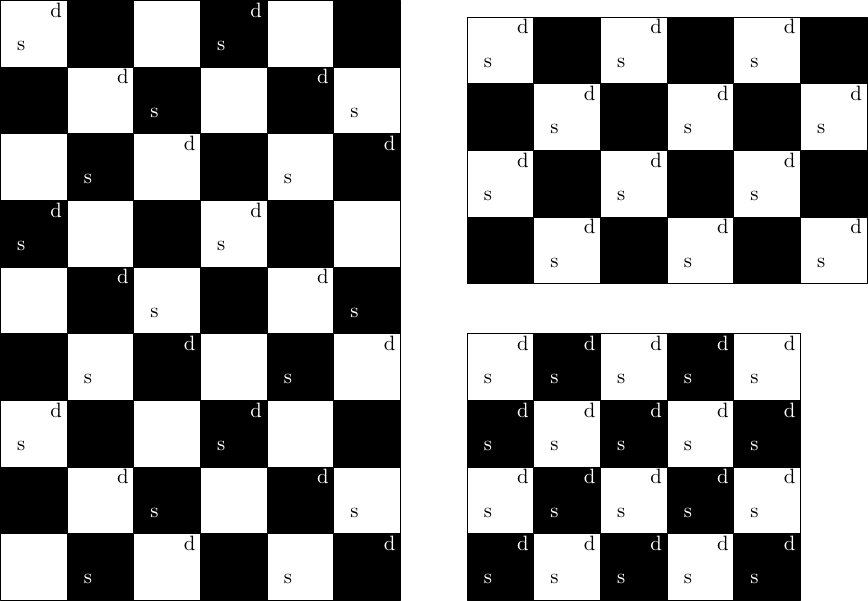}
    \caption{The $s$- and $d$-diagonals beginning at the top left vertex of the graphs $B_{9 \times 6}^t, B_{4 \times 6}^t$, and $ B_{5 \times 6}^t$. %Note that the diagonals on $B_{5 \times 6}^t$ are in fact the whole graph; on $B_{4 \times 6}^t$ they are the same half of the graph; on $B_{9 \times 6}^t$, however, each diagonal contains a third of the graph; the diagonals overlap but overall have different vertices.
    }
    \label{fig:s-d-diag}
\end{figure}

\begin{lemma}[\cite{torBishop}]
    The length of each diagonal of $B_{m\times n}^t$ is $\lcm(m,n)$.
\end{lemma}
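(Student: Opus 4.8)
The plan is to recognize each diagonal as a single orbit of a cyclic shift map on the torus $\mathbb{Z}/m\times\mathbb{Z}/n$, and then to compute the orbit length directly. First I would fix a $d$-diagonal and observe that, since $d$-diagonals proceed by repeated $(1,1)$ moves, the vertices on the diagonal through $(i,j)$ are exactly the images $T^t(i,j)$ obtained by iterating the map $T(a,b)=(a+1,b+1)$, with the first coordinate read modulo $m$ and the second modulo $n$. The length of the diagonal is then the number of distinct vertices in this orbit, which is the smallest positive integer $t$ with $T^t(i,j)=(i,j)$.

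Computing this is straightforward. We have $T^t(i,j)=(i+t,j+t)$, read modulo $m$ and $n$ respectively, so $T^t(i,j)=(i,j)$ holds if and only if $t\equiv 0\pmod m$ and $t\equiv 0\pmod n$, which is equivalent to $\lcm(m,n)\mid t$. Hence the smallest such $t$ is $\lcm(m,n)$. To conclude that the diagonal has exactly $\lcm(m,n)$ vertices, I would also verify that $T^0(i,j),\ldots,T^{\lcm(m,n)-1}(i,j)$ are pairwise distinct: if $T^{t_1}(i,j)=T^{t_2}(i,j)$ with $0\le t_1<t_2<\lcm(m,n)$, then $\lcm(m,n)\mid (t_2-t_1)$, which forces $t_2-t_1=0$, a contradiction.

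The $s$-diagonal case is handled identically, using the shift $T(a,b)=(a-1,b+1)$; the closing condition becomes $-t\equiv 0\pmod m$ and $t\equiv 0\pmod n$, which again reduces to $\lcm(m,n)\mid t$ and hence yields the same length. Since neither computation depends on the starting vertex $(i,j)$, every diagonal of both types has length $\lcm(m,n)$, as claimed.

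There is no serious obstacle in this argument; the only point requiring genuine care is the distinctness check, since it is what guarantees that we are counting vertices rather than merely identifying the period at which the diagonal first repeats. I would also phrase everything so that it applies uniformly regardless of the parities of $m$ and $n$: in the odd case diagonals are no longer monochromatic (as noted in the preceding discussion), but this coloring phenomenon is irrelevant to the length computation, which depends only on the orbit structure of the shift map.
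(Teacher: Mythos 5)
Your proof is correct and takes essentially the same route as the paper's (which only summarizes the argument from the cited source): a diagonal is traversed by a unit shift on $\mathbb{Z}/m\times\mathbb{Z}/n$, and the first return to the starting vertex occurs precisely when $t\equiv 0\pmod{m}$ and $t\equiv 0\pmod{n}$, i.e.\ at $t=\lcm(m,n)$. Your explicit distinctness check, which the paper leaves implicit, tightens the argument but does not change its substance.
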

The summary of the proof from \cite{torBishop} is quick to give:  a bishop moving along a fixed diagonal will return to its row every $n$ moves, and to its column every $m$ moves; therefore it will first return to its initial location after $\lcm(m,n)$ moves.

Since we may partition $V(B^t_{m\times n})$ into either $s$-diagonals or $d$-diagonals, this immediately leads to the following corollaries.

\begin{corollary}
     There are $\gcd(m,n)$ many $s$-diagonals, and $\gcd(m,n)$ many $d$-diagonals.
\end{corollary}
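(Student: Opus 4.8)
The plan is to prove this by a straightforward counting argument, combining the length of each diagonal established in the preceding lemma with the fundamental identity relating greatest common divisor and least common multiple.

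First I would observe that the $s$-diagonals partition $V(B^t_{m\times n})$, and not merely cover it. Indeed, the $s$-diagonal through a given vertex is the set of all vertices reachable from it by repeated $(-1,1)$ moves taken modulo $m$ in the first coordinate and modulo $n$ in the second. Because these moves are invertible, the relation ``lies on a common $s$-diagonal'' is an equivalence relation, so its classes are pairwise disjoint and cover every vertex. The same holds for $d$-diagonals, using $(1,1)$ moves in place of $(-1,1)$. This is precisely the partition invoked in the remark immediately preceding the corollary.

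Next, since $B^t_{m\times n}$ has exactly $mn$ vertices, and each $s$-diagonal contains exactly $\lcm(m,n)$ vertices by the preceding lemma, the pairwise disjointness of the $s$-diagonals forces the number of them to be $\frac{mn}{\lcm(m,n)}$. Applying the standard identity $\gcd(m,n)\cdot\lcm(m,n)=mn$, this quotient is exactly $\gcd(m,n)$. The identical computation applied to the $d$-diagonals yields $\gcd(m,n)$ of them as well, completing the proof.

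I do not expect any genuine obstacle here: the corollary is essentially immediate arithmetic once the length of a diagonal is known. The only point deserving a sentence of care is confirming that distinct diagonals of the same type never overlap, so that they truly partition the $mn$ vertices rather than overcounting them; this is handled by the equivalence-relation observation above, after which the count is forced.
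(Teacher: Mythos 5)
Your proof is correct and follows exactly the route the paper intends: the paper derives this corollary from the remark that the $s$-diagonals (respectively $d$-diagonals) partition $V(B^t_{m\times n})$ together with the lemma that each diagonal has $\lcm(m,n)$ vertices, giving $\frac{mn}{\lcm(m,n)}=\gcd(m,n)$ diagonals of each type. Your added care in verifying the partition property via the equivalence-relation observation simply makes explicit what the paper leaves implicit.
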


\begin{corollary}
The graph $B_{m\times n}^t$ is isomorphic to the complete graph $K_{mn}$ if and only if $m$ and $n$ are relatively prime.
\end{corollary}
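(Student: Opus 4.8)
The plan is to translate adjacency in $B_{m\times n}^t$ into a single congruence condition modulo $g:=\gcd(m,n)$, and then read off both directions of the equivalence. Two distinct vertices of $B_{m\times n}^t$ are adjacent precisely when they lie on a common $s$- or $d$-diagonal, since a bishop may traverse any number of steps along a diagonal, and by the adjacency rule this corresponds to a displacement that is an integer multiple of $(1,1)$ or $(1,-1)$ modulo $(m,n)$. Writing $a=i_2-i_1$ and $b=j_2-j_1$, the vertices $(i_1,j_1)$ and $(i_2,j_2)$ share a $d$-diagonal iff there is an integer $t$ with $t\equiv a\pmod m$ and $t\equiv b\pmod n$, which by the Chinese Remainder Theorem holds iff $a\equiv b\pmod g$; replacing the direction $(1,1)$ by $(1,-1)$ shows they share an $s$-diagonal iff $a\equiv -b\pmod g$. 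Hence two vertices are adjacent if and only if $a\equiv \pm b\pmod g$.

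For the forward implication, suppose $\gcd(m,n)=1$, so $g=1$. Then the condition $a\equiv\pm b\pmod 1$ holds vacuously for every pair of vertices, so every two vertices are adjacent and $B_{m\times n}^t\cong K_{mn}$. Equivalently, one may invoke the preceding corollaries: when $g=1$ there is a single $s$-diagonal, which by the length lemma has $\lcm(m,n)=mn$ vertices and therefore contains all of $V(B_{m\times n}^t)$; as all vertices on one diagonal are pairwise adjacent, the graph is complete.

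For the reverse implication I would argue by contrapositive. Suppose $g=\gcd(m,n)\geq 2$; since $g\mid n$ we have $n\geq 2$, so the vertices $(1,1)$ and $(1,2)$ both exist. For this pair $a=0$ and $b=1$, so $a\equiv\pm b\pmod g$ would force $0\equiv \pm 1\pmod g$, i.e. $g\mid 1$, contradicting $g\geq 2$. Thus $(1,1)$ and $(1,2)$ are non-adjacent, so $B_{m\times n}^t$ is not complete. Combining the two implications yields the claimed equivalence.

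The only genuinely delicate step is the adjacency criterion in the first paragraph: one must verify carefully that on the torus the bishop's reachability along a diagonal is exactly captured by solvability of the simultaneous congruences, and that the wrap-around does not create spurious adjacencies beyond the $\pm b$ condition. Once this criterion is in hand, both directions are immediate, with the forward direction also available as a direct consequence of the diagonal-count corollary.
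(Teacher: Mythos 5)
Your proof is correct, but it takes a genuinely different route from the paper's. The paper treats this corollary as an immediate consequence of the diagonal structure: since every diagonal has length $\lcm(m,n)$ and the vertices of a single diagonal are pairwise adjacent, $\gcd(m,n)=1$ gives one $s$-diagonal containing all $mn$ vertices, hence a complete graph; conversely, when $g=\gcd(m,n)\geq 2$ the neighborhood of any vertex is contained in the union of its $s$- and $d$-diagonals, which has at most $2\lcm(m,n)-1=2mn/g-1\leq mn-1$ vertices including the vertex itself, so no vertex can have the $mn-1$ neighbors that completeness requires. You instead derive an explicit adjacency criterion: distinct vertices with displacement $(a,b)$ are adjacent if and only if $a\equiv\pm b\pmod{g}$, obtained by translating ``lies on a common diagonal'' into the solvability of the simultaneous congruences $t\equiv a\pmod{m}$, $t\equiv \pm b\pmod{n}$ and invoking the generalized (non-coprime moduli) Chinese Remainder Theorem. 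That criterion makes both implications one-liners and, for the converse, produces a concrete non-adjacent pair $(1,1),(1,2)$ rather than a counting bound; you also correctly note that the forward direction can be recovered from the paper's diagonal-count corollary. The one step deserving care is the one you flag yourself, the CRT-based adjacency criterion; your verification of it is sound, since solvability of $t\equiv a\pmod m$, $t\equiv b\pmod n$ is indeed equivalent to $a\equiv b\pmod{\gcd(m,n)}$, so the proof is complete. Your approach buys a reusable, explicit description of the edge set of $B^t_{m\times n}$, while the paper's buys brevity given the lemmas it has already established.
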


We now describe the intersection properties of $s$-diagonals with $d$-diagonals.

  \begin{lemma} For a toroidal bishop's graph $B_{m\times n}^t$,
~\begin{itemize}
    \item[(i)] if $m$ or $n$ is odd, every $s$-diagonal intersects every $d$-diagonal at least once; and
    \item[(ii)] if $m,n$ are both even, every $s$-diagonal intersects every $d$-diagonal of its color at least once.
\end{itemize}
\end{lemma}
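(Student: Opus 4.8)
The plan is to reduce both statements to a counting argument over the toroidal lattice. Fix an $s$-diagonal $S$ and a $d$-diagonal $D$; I want to show they share a vertex under the stated parity hypotheses. Parametrize an $s$-diagonal starting at $(a,b)$ by the points $(a-t,\,b+t)$ for $t=0,1,\dots,\lcm(m,n)-1$, with both coordinates read modulo $m$ and $n$ respectively; similarly parametrize the $d$-diagonal starting at $(c,d)$ by $(c+u,\,d+u)$. A common point exists exactly when the system $a-t\equiv c+u \pmod m$ and $b+t\equiv d+u \pmod n$ is solvable in integers $t,u$. Adding and subtracting, this is equivalent to solving a pair of congruences in the single variable $s=t+u$ (from the sum of the two equations) together with a constraint on $t-u$; the cleaner formulation is that $S$ and $D$ intersect iff there exist integers $t,u$ with $t+u\equiv a-c \pmod m$ and $t-u\equiv d-b-(a-c)\pmod{\,}$ — so I would instead work directly with the two linear congruences and analyze when the affine system has a solution.

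First I would set up the intersection condition as a linear system over $\Z$ and invoke the standard solvability criterion: the system
\[
\begin{pmatrix} -1 & -1 \\ 1 & -1 \end{pmatrix}\begin{pmatrix} t \\ u\end{pmatrix} \equiv \begin{pmatrix} c-a \\ d-b \end{pmatrix} \pmod{(m,n)}
\]
(with the first congruence mod $m$ and the second mod $n$) is solvable precisely when certain $\gcd$ divisibility conditions hold. I expect the key invariant to be the parity of the quantity $(a+b)-(c+d)$, i.e. the colors of the two starting squares, since moving along either diagonal changes $i+j$ by an even amount $(\pm t \text{ in } i, \pm t \text{ in } j$, giving net change $0$ or $\pm 2t)$ — so color is a diagonal invariant when both $m,n$ are even. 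This is exactly why part (ii) restricts to same-colored diagonals, and why part (i) has no color restriction: when $m$ or $n$ is odd, traversing a full diagonal of length $\lcm(m,n)$ visits squares of both colors (the color is no longer a congruence invariant modulo the torus), so the parity obstruction disappears.

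The cleanest route I would actually take is counting rather than explicit congruence-solving. There are $\gcd(m,n)$ $s$-diagonals and $\gcd(m,n)$ $d$-diagonals (by the Corollary), each of length $\lcm(m,n)$, and together the $s$-diagonals partition all $mn$ vertices, as do the $d$-diagonals. In case (i), I would fix one $s$-diagonal $S$ (of size $\lcm(m,n)$) and count how its vertices distribute among the $\gcd(m,n)$ $d$-diagonals; if I can show $S$ meets each $d$-diagonal, a double-counting / pigeonhole balance forces each intersection to have size exactly $\lcm(m,n)/\gcd(m,n)$, but more to the point I only need nonemptiness. For nonemptiness I would show that intersecting with a fixed $d$-diagonal is governed by a single residue class, and that under the odd hypothesis the relevant modulus is $1$ (no obstruction), whereas when $m,n$ are both even a $\Z/2$ obstruction appears that is precisely the color. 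In case (ii), I would restrict attention to a single color class, on which the same argument runs with the color obstruction automatically satisfied.

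The main obstacle I anticipate is handling the modular bookkeeping carefully when $m$ and $n$ share common factors beyond $2$: the naive $\gcd$ divisibility condition can fail to be captured by color alone if I am not precise about which modulus each congruence lives in. Concretely, the danger is conflating "solvable mod $m$ and mod $n$ separately" with "solvable simultaneously," so I would be careful to keep the two congruences in their own moduli and apply CRT only on the coordinates where $m$ and $n$ are coprime, treating the common-factor part separately. I expect the parity of $m$ and $n$ to enter exactly through whether $2 \mid \gcd$ of the relevant determinant-type quantity, and verifying that this $\gcd$-obstruction coincides with the combinatorial color invariant (same-parity of $i+j$) is the crux that makes the clean statement "same color $\Leftrightarrow$ intersection in the both-even case" come out. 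Once that identification is made, both (i) and (ii) follow immediately.
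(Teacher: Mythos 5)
Your core approach is sound and, once completed, gives a correct proof, but it runs on different machinery than the paper's, so a comparison is worthwhile. The paper first identifies each $d$-diagonal with a full residue class $\{(a,b) : a-b \equiv r \pmod{\gcd(m,n)}\}$ and each $s$-diagonal with a class $\{(a,b) : a+b \equiv k \pmod{\gcd(m,n)}\}$ (this rests on the earlier lemma that every diagonal has length $\lcm(m,n)$), so the whole lemma collapses to solvability of the single congruence $2a \equiv k \pmod{\gcd(m,n)}$: always solvable when $\gcd(m,n)$ is odd, which is exactly the case ``$m$ or $n$ odd,'' and solvable when $k$ is even, which is exactly the same-color condition in the both-even case. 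You instead keep the parametrization and the two-variable system $t+u \equiv a-c \pmod m$, $t-u \equiv d-b \pmod n$. The clean way to finish your route --- and the step your proposal stops short of --- is the substitution $x = t+u$, $y = t-u$: integer solutions $(t,u)$ correspond exactly to integers $x \equiv a-c \pmod m$ and $y \equiv d-b \pmod n$ with $x \equiv y \pmod 2$. This decouples the two congruences completely, so the only possible obstruction is parity: if $m$ (or $n$) is odd, the class of $a-c$ modulo $m$ contains integers of both parities, so the system is always solvable, proving (i); if $m$ and $n$ are both even, the parities of $x$ and $y$ are forced to equal those of $a-c$ and $d-b$, so solvability is equivalent to $a+b \equiv c+d \pmod 2$, i.e.\ same color, proving (ii). Your argument is more self-contained (it never needs the residue-class description of diagonals); the paper's is shorter given the structural lemmas it has already established. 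Both hinge on the same divisibility-by-$2$ obstruction.

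Two corrections are needed in a final write-up. First, your worry about common factors of $m$ and $n$ beyond $2$, and about conflating separate with simultaneous solvability, dissolves under the substitution above: $x$ and $y$ are independent variables linked only by parity, so no CRT analysis is needed. As written, though, your proposal never actually pins down the solvability criterion, and that substitution (or an equivalent) is the crux that must appear explicitly. Second, your motivating claim that when $m$ or $n$ is odd ``traversing a full diagonal visits squares of both colors'' is false when $m$ and $n$ are both odd: in $B^t_{3\times 3}$ the $d$-diagonal $\{(1,1),(2,2),(3,3)\}$ is monochromatic, since its only wraparound is a simultaneous corner wrap, which changes $i+j$ by $-(m+n)$, an even number. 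The correct reason (i) holds in the both-odd case is the congruence argument, not a failure of color-invariance; since this remark is only motivational, the proof survives, but the remark should be deleted or corrected.
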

\begin{proof}

   For claim (i) we assume at least one of $m$ and $n$ is odd.
    Without loss of generality, consider the $d$-diagonal that contains all $(a,b)$ such that $a-b\equiv 0 \mod\gcd(m,n)$. Now, consider any $s$-diagonal; for some $k$, it contains all $(a,b)$ such that $a+b\equiv k \mod\gcd(m,n)$. These diagonals intersect if there is some point in common between the two; that is, if there exists $a$ such that $2a\equiv k\mod \gcd(m,n)$. But this is indeed the case: $\gcd(m,n)$ is odd, so for any $k$, either $k$ or $\gcd(m,n)+k$ is even. Therefore dividing $k$ or $\gcd(m,n)+k$ by 2 yields an integer; it follows that a diagonal starting on $(0,a)$ or $(a,0)$ will intersect the desired $d$-diagonal. %(Note that $0\leq k <\gcd(m,n)$ and that $2\gcd(m,n)$ is strictly less than either $m$ or $n$, so $(0,a)$ or $(a,0)$ will be a valid coordinate on the chessboard even discounting modular arithmetic.)

  For claim (ii) we assume $m$ and $n$ are both even.  The black and white components of the graph are isomorphic, so without loss of generality, consider again the diagonal of $(a,b)$ with $a-b\equiv 0\mod \gcd(m,n)$. The only $s$-diagonals of the same color are of the form $(a,b)$ with $a+b\equiv k \mod \gcd(m,n)$ for some even $k$. So at intersections with the fixed $d$-diagonal, $2a\equiv k\mod \gcd(m,n)$. Since $k$ is even, such an $a$ exists, so the desired intersection exists.
\end{proof}

We are now ready to compute the gonality of all toroidal bishop's graphs.  Our proof is inspired by that of \cite[Theorem 1.1]{rooks_gonality}, where the gonality of rook's graphs was determined.

\begin{theorem} For $m,n\geq 2$, we have $\gon(B^t_{m\times n}) = mn-\gcd(m,n)$\end{theorem}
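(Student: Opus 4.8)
The plan is to prove $\gon(B^t_{m\times n}) = mn - \gcd(m,n)$ by establishing matching upper and lower bounds, following the structural strategy of \cite[Theorem 1.1]{rooks_gonality}. Write $g = \gcd(m,n)$ and $N = mn$. For the \textbf{upper bound}, I would exploit the diagonal decomposition: since the $g$ many $s$-diagonals partition $V(B^t_{m\times n})$ into $g$ classes, each of size $\lcm(m,n) = N/g$, and each diagonal is a clique (any two squares on a common diagonal are adjacent). The natural divisor to try is one that places chips to make one chosen diagonal ``chip-free'' as a target while leaving one chip on almost everything. Concretely, I would place one chip on every vertex of $B^t_{m\times n}$ except for the $g$ vertices of a single $s$-diagonal $\Delta$, giving degree $N - g$. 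To see this has positive rank, I would use the clique structure together with Lemma~\ref{lemma:n-alpha(G)}: the complement of $\Delta$ is, I expect, closely related to an independent set or at least a set whose removal lets set-firing push a chip to any target vertex. The cleanest route is to verify $\alpha(B^t_{m\times n}) = g$, which would follow from the fact that each diagonal is a clique (so an independent set meets each $s$-diagonal at most once, giving $\alpha \le g$) together with an explicit independent set of size $g$ (one vertex from each $s$-diagonal, chosen on a common $d$-diagonal using the intersection lemma); then Lemma~\ref{lemma:n-alpha(G)} gives $\gon \le N - \alpha = N - g$ directly, provided the graph has no isolated vertices.

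For the \textbf{lower bound} $\gon(B^t_{m\times n}) \ge N - g$, I would build a scramble $\mathcal{S}$ of order $N-g$. The promising choice, mirroring the rook's-graph argument, is to take the eggs to be the individual vertices (singletons), or more robustly a uniform scramble of small egg size. With singleton eggs, the hitting number is $h(\mathcal{S}) = N$ trivially, so the order is controlled by the egg-cut number $e(\mathcal{S})$, and I would need to show that any edge set whose removal separates two vertices into distinct components has size at least $N - g$. This reduces to a connectivity/edge-cut estimate: I must show that between a smallest-cut side and its complement there are at least $N-g$ crossing edges, equivalently that the minimum over all separating cuts is $N-g$. Here the clique-on-diagonals structure is the key input: each vertex has high valence (roughly $2(N/g - 1)$ from its two diagonals), and isolating any single vertex already costs its full valence. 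I expect the extremal cut to correspond to separating off a set aligned with the diagonal/coloring structure, yielding exactly $N-g$.

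The \textbf{main obstacle} I anticipate is the egg-cut lower bound: pinning down that the minimum egg-cut is \emph{exactly} $N-g$ and never smaller requires a careful global edge-counting argument over all possible bipartitions, not just the obvious diagonal-based cuts. The delicate case is when $m$ and $n$ share a large common factor (so $g$ is large and diagonals are short), where the graph is ``further'' from complete and low-cost cuts could conceivably appear; the intersection lemmas (distinguishing the $m,n$ both-even case from the odd case) will be needed precisely to control how diagonals of different colors interact across a cut. A clean way to organize this is to count, for a candidate separating set $S$ with $|S| = s$, the number of non-edges strictly inside $S$ and inside its complement, and argue via the diagonal partition that the number of crossing edges is minimized exactly when the partition respects the diagonal classes; comparing against the rook's-graph computation from \cite{rooks_gonality}, which has an analogous row/column clique structure, should supply the template. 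If the singleton scramble proves awkward to bound, the fallback is an $\alpha$-uniform scramble as in \cite{uniform_scrambles}, whose hitting number equals $N - \alpha = N - g$ by the independence computation above, shifting the burden onto the same egg-cut estimate but with a cleaner hitting-number match to the upper bound.
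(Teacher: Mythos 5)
Your upper bound is essentially the paper's: the authors likewise obtain $\gon(B^t_{m\times n})\leq mn-\gcd(m,n)$ from the independence number $\alpha(B^t_{m\times n})=\gcd(m,n)$ (which they cite from \cite{torBishop} rather than reprove) together with Lemma \ref{lemma:n-alpha(G)}. That half of your plan is sound.

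The lower bound, however, has a fatal gap: no scramble can certify it, because scramble number is genuinely \emph{strictly smaller} than $mn-\gcd(m,n)$ for these graphs. Take $B^t_{5\times 5}$: each vertex lies on one $s$-diagonal and one $d$-diagonal, each of length $\lcm(5,5)=5$, meeting only at that vertex, so every vertex has valence $8$. With your singleton eggs, deleting the $8$ edges at a single vertex is an egg-cut, so $e(\mathcal{S})\leq 8$ and the order of the scramble is at most $8$, far below $mn-\gcd(m,n)=20$; the same cheap cuts defeat the $\alpha$-uniform fallback. This is not a defect of your particular scramble that sharper edge-counting could repair: the paper shows (see Figure \ref{figure:toroidal_bishop_5x5} and the surrounding example) that \emph{every} scramble on $B^t_{5\times 5}$ has order at most $14$, i.e. $\sn(B^t_{5\times 5})\leq 14<20=\gon(B^t_{5\times 5})$. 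The case you flagged as ``delicate'' --- $\gcd(m,n)$ large, diagonals short --- is exactly where the approach collapses, since vertex valences are on the order of $2(\lcm(m,n)-1)$, which is much smaller than $mn-\gcd(m,n)$ whenever the diagonals are short. The paper's actual lower-bound argument avoids cut-based invariants entirely and instead runs Dhar's burning algorithm with an extremal trick borrowed from the rook's graph proof in \cite{rooks_gonality}: assuming a positive-rank divisor of degree less than $mn-\gcd(m,n)$, choose among equivalent effective divisors one maximizing the chip count on its poorest $d$-diagonal; some $d$-diagonal then has fewer than $\lcm(m,n)-1$ chips, and since each diagonal induces a complete graph $K_{\lcm(m,n)}$ and (by the intersection lemma) meets every other diagonal, a fire started at a chipless vertex of that poorest $d$-diagonal burns it, spreads to every diagonal, and burns all but some set $U$; firing $U$ then strictly enriches the poorest $d$-diagonal, contradicting maximality. (When $m,n$ are both even, the same argument is applied to each of the two connected components.) To repair your proof you would need to replace the scramble argument with a divisor-theoretic certificate of this kind.
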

\begin{proof}
First we note that $\alpha(B^t_{m\times n})=\gcd(m,n)$ by \cite[\S 6]{torBishop}.  %Since two vertices in an independent set cannot be in the same $s$-diagonal, we have $\alpha(B^t_{m\times n}\leq gcd(m,n)$; and the vertices $(1,1),(1,2),\cdots,(1,\gcd(m,n))$ form an independent set, so $\alpha(B^t_{m\times n})\geq gcd(m,n)$.
Since  $B^t_{m\times n}$ has no isolated vertices, we may apply Lemma \ref{lemma:n-alpha(G)} to deduce $\gon(B^t_{m\times n}) \leq |V(B^t_{m\times n})|-\alpha(B^t_{m\times n})=mn-\gcd(m,n)$. %Gonality of a simple graph $G$ is upper bounded by $|V(G)| - \alpha(G)$, where $\alpha(G)$ is the independence number (size of the largest independent set)\cite{Deveau_2016}. Clearly putting one chip on every vertex not in the independent set is a winning divisor: every vertex with no chips on it can make a legal borrowing move. For toroidal bishop's graphs, $\gcd(m,n)$ is precisely the independence number: the vertices $(0,0),(0,1),\cdots,(0,\gcd(m,n)-1)$ form an independent set, but any set with more vertices would, by the pigeonhole principle, have two in the same diagonal. 
%Our proof that this is also the lower bound of gonality is similar in spirit to the technique used to lower bound the gonalities of rook's graphs' in \cite{rooks_gonality}. 

It remains to show that $\gon(B^t_{m\times n}) \geq mn-\gcd(m,n)$. First, consider the case where $m,n$ are not both even, so that the bishop's graph is connected.
Suppose there is an effective divisor $D_0$ of positive rank with degree strictly less than $mn-\gcd(m,n)$. Among all effective divisors equivalent to $D_0$, let $D$ be one where the $d$-diagonal with the fewest chips placed on it has the most chips. There are $\gcd(m,n)$ many diagonals, and $\frac{mn-\gcd(m,n)}{\gcd(m,n)} = \lcm(m,n)-1$, so if there are fewer chips than $mn-\gcd(m,n)$ then at least one $d$-diagonal has fewer chips than $\lcm(m,n)-1$. Now,choose a vertex $q$ on the poorest $d$-diagonal with $D(q)=0$, and run Dhar's burning algorithm. Since the $d$-diagonal is isomorphic to the complete graph $K_{\lcm(m,n)}$, the fire burns the whole diagonal since there are fewer than $\lcm(m,n)-1$ chips \cite[Proposition 14]{aidun2019gonality}. Since $m,n$ are both odd, this diagonal intersects every other diagonal at least once, and therefore starts a fire along each diagonal.  Since the poorest $s$-diagonal also has strictly less than $\lcm(m,n)-1$ chips, and it also burns.

Since $D$ has positive rank and $q$ has no chips, we know that the whole graph does not burn.  Let $U$ be the set of vertices that do not burn, which Dhar's algorithm dictates we now fire to obtain a new divisor $D'$. Since the burned $s$-diagonal intersects each $d$-diagonal, we know that each has at most $\lcm(m,n)-1$ vertices in $U$. Thus when $U$ is fired, any $d$-diagonal with vertices in $U$ moves $k(\lcm(m,n)-k)\geq \lcm(m,n)-1$ chips to the burned vertices of that diagonal. Therefore each $d$-diagonal that has unburned vertices has more chips than the richest poorest diagonal we started with.  On the other hand every $d$-diagonal that burned completely will gain chips, as the unburned portions of $d$-diagonals fire along $s$-diagonals. Thus $D'$ has a richer poorest $d$-diagonal than $D$, a contradiction.  Thus any positive rank divisor on $B^t_{m\times n}$ has degree at least $mn-\gcd(m,n)$ when at least of of $m$ and $n$ is odd.

If $m,n$ are both even, the gonality of the graph is simply the sum of the gonalities of its components. Thus if the gonality is less than $nm-\gcd(nm)$, one of the connected components must have gonality less than $\frac{nm-\gcd(nm)}{2}$. Writing $n=2a,m=2b$, the gonality of that component must be less than $2ab - 2\gcd(a,b)$. So, some $d$-column has fewer than $\frac{2ab - 2\gcd(a,b)}{2\gcd(a,b)} = \lcm(a,b)-1$ chips for any equivalent divisor. From there, the argument then proceeds as before.
\end{proof}

The reader may wonder whether this result could be proved by means of scramble number.  For certain values of $m$ and $n$, the answer is yes; for instance, if $\gcd(m,n)=1$, then we have a complete graph, which has scramble number equal to gonality.  For other values, this is not the case, even if the graph is connected.

\begin{figure}[hbt]
    \centering
    \includegraphics{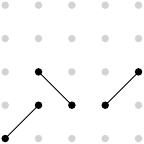}
    \caption{Six vertices on the $5\times 5$ toroidal bishop's graph.}
    \label{figure:toroidal_bishop_5x5}
\end{figure}

\begin{example}
In this example we show that the $5\times 5$ toroidal bishop's graph has scramble number strictly smaller than gonality.  By the previous theorem, we know $\gon(B^t_{5\times 5})=5\cdot 5-\gcd(5,5)=25-5=20$.  Suppose for the sake of contradiction that $\sn(B^t_{5\times 5})=20$, and let $\mathcal{S}$ be a scramble of order $20$ on $B^t_{5\times 5}$.  Consider the $19$ grey vertices in Figure \ref{figure:toroidal_bishop_5x5}.  Since $h(\mathcal{S})\geq ||\mathcal{S}||=20$, these do not form a hitting set, so there must be an egg $E_1$ contained among the other six vertices.   The only edges shared by the six black vertices are the three pictured, so the egg must consist of a single vertex or a pair of adjacent vertices.  Moreover, since those six vertices also cannot form a hitting set for $\mathcal{S}$, there is a second egg $E_2$ contained among the other $19$. Deleting the edges connecting $E_1$ to the rest of the graph thus forms an egg-cut.  If $E_1$ is a single vertex, the egg-cut has size $8$; and if $E_1$ is a pair of adjacent vertices, the egg-cut has size $8+8-2=14$.  Either way, we have $||\mathcal{S}||\leq e(\mathcal{S})\leq 14<20$, a contradiction.  Thus $\sn(B^t_{5\times 5})< \gon(B^t_{5\times 5})$.
\end{example}

\section{Knight's Graphs}\label{section:knightsgraphs}

The story for knight's graphs is similar to bishop's graphs:  we can find a bound on $\gon(N_{m\times n})$ depending solely on $m$, although this bound is provably suboptimal for graphs with $m$ and $n$ close to one another.   We begin by presenting our most general upper bound, and then pivot to more intermediate cases.

\begin{theorem}\label{theorem:knights_most_general}
    For the knight's graphs $N_{m\times n}$ with $m\leq n$, we have the following formulas and  bounds:
    \begin{enumerate}
        \item $\gon(N_{2\times n})= 4$.
        \item $\gon(N_{3 \times n}) \leq 18$
        \item $\gon(N_{m \times n}) \leq 10m - 12$ for $m\geq 4$.
    \end{enumerate}
\end{theorem}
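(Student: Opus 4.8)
The plan is to treat the three parts by different methods: part (1) is an exact value that I would obtain structurally, whereas parts (2) and (3) are upper bounds that I would prove by exhibiting explicit positive-rank divisors, following the ``place chips on a few leftmost columns and set-fire growing prefixes of columns to sweep the chips across the board'' strategy already used for Theorem \ref{theorem:gonality_kings} and Theorem \ref{theorem:bishops_general}.

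For part (1), the key observation is that on a board with only two rows no $(\pm 2,\pm 1)$ move is available, so every edge of $N_{2\times n}$ comes from a $(\pm 1,\pm 2)$ move and hence flips the row while shifting the column by two. Consequently column parity is preserved by every edge, and I would check that the odd-column vertices split into exactly two paths (one running $(1,1),(2,3),(1,5),\dots$ and one running $(2,1),(1,3),(2,5),\dots$), with the even columns splitting analogously. Thus for $n\ge 2$ the graph is a disjoint union of exactly four components, each a path (possibly a single vertex). Since a path is a tree and so has gonality $1$ by Lemma \ref{lemma:common_graph_gonalities}(i), the remark that the gonality of a disconnected graph equals the sum of the gonalities of its components gives $\gon(N_{2\times n})=4$ immediately, settling the upper and lower bound at once.

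For parts (2) and (3), I would build a divisor supported on three consecutive leftmost columns. Firing the first $k$ columns only moves chips across the two-column-wide boundary: a vertex in column $k$ sends chips to columns $k+1$ and $k+2$, while a vertex in column $k-1$ sends chips to column $k+1$. Tracking the per-interior-row flow shows that firing the first $k$ columns has net effect $(-2,-4,+4,+2)$ on columns $(k-1,k,k+1,k+2)$, and solving the resulting ``translate right by one'' recursion pins the interior profile down to $(2,6,2)$ chips per row on three consecutive columns. This sums to $10$ per interior row, and firing successively larger prefixes of columns then translates the whole configuration rightward across the board. The top two and bottom two rows carry fewer chips because several knight moves leave the board there; by up--down symmetry each end contributes a correction of $6$, for a total degree of $10m-12$ once $m\ge 4$ makes the two ends disjoint. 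For $m=3$ (part (2)) the two boundary bands overlap and the middle row degenerates (its $(\pm 2,\pm 1)$ moves all leave the board), so I would run the same sweep with a hand-adjusted three-column profile and simply record the resulting bound of $18$.

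The main obstacle is the boundary bookkeeping. Unlike the king's graph, whose firing boundary is one column wide and whose rows decouple, the knight's two-column boundary couples adjacent rows: a chip landing in row $1$ after a firing may originate from rows $2$ and $3$, so the top- and bottom-row chip counts cannot be read off row by row but must be found by solving a small coupled linear system that simultaneously makes the full divisor translate and keeps it effective. I also expect to need a separate ``wedge'' argument, as in the proof of Theorem \ref{theorem:bishops_general}, to confirm that the leftmost and rightmost columns are genuinely covered, since the first and last few firings behave differently against the left and right board edges; these I would resolve by symmetry. Verifying that the adjusted boundary counts are nonnegative and that the total correction is exactly $12$ for every $m\ge 4$ is routine, but it is where the real care is required.
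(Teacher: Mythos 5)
Your proposal is correct and follows essentially the same route as the paper: part (1) via the decomposition of $N_{2\times n}$ into four paths, and parts (2)--(3) via the very same divisor (interior rows $(2,6,2)$, boundary rows $(1,3,1)$ and $(2,5,2)$, total $10m-12$, with the $m=3$ degenerate profile summing to $18$) swept rightward by set-firing growing prefixes of columns. The only differences are presentational: you derive the profile by solving the translation recursion rather than stating it outright, and your anticipated bishop-style ``wedge'' argument is unnecessary, since this divisor places a chip on every vertex of the first three columns, so the sweep covers every vertex directly.
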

\begin{proof}[Proof]
We remark that $N_{2 \times n}$ has four connected components, each a path, as illustrated in Figure \ref{2_by_n_knight's_proof_figure}.  A path has gonality $1$, so $\gon(N_{2 \times n})=4$.

    \begin{figure}[hbt]
        \centering
        \includegraphics[width = 0.3\textwidth]{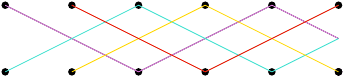}
        \caption{The graph $N_{2\times6}$, whose four components are all paths.}
        \label{2_by_n_knight's_proof_figure}
    \end{figure}

For $m\geq 3$, we construct a divisor $D$ as follows.  Within the first and third columns, place one chip on the top and bottom vertices, and two chips on all other vertices.  Within the second column, place three chips on the top and bottom vertices.  Then, if $m=3$, place $4$ chips on the middle vertex of that column; and if $m\geq 4$, place $5$ chips on the vertices directly above the top vertex and directly below the bottom vertex, and $6$ chips on all remaining vertices in the column.  This divisor $D$ is illustrated for $3\leq m\leq 5$ and $n=8$ on the left in Figure \ref{fig:const-knight}.  Note that if $m=3$, we have $\deg(D)=18$; and if $m\geq 4$,
\[\deg(D)=1+1+2(m-2)+3+3+5+5+6(m-4)+1+1+2(m-2)=10m-12.\]  In both cases  equals our claimed upper bound for $\gon(N_{m\times n})$, so to prove the remainder of our claim it will suffice to show that $D$ has positive rank.

\begin{figure}[hbt]
    \centering
    \includegraphics[width = 0.7\textwidth]{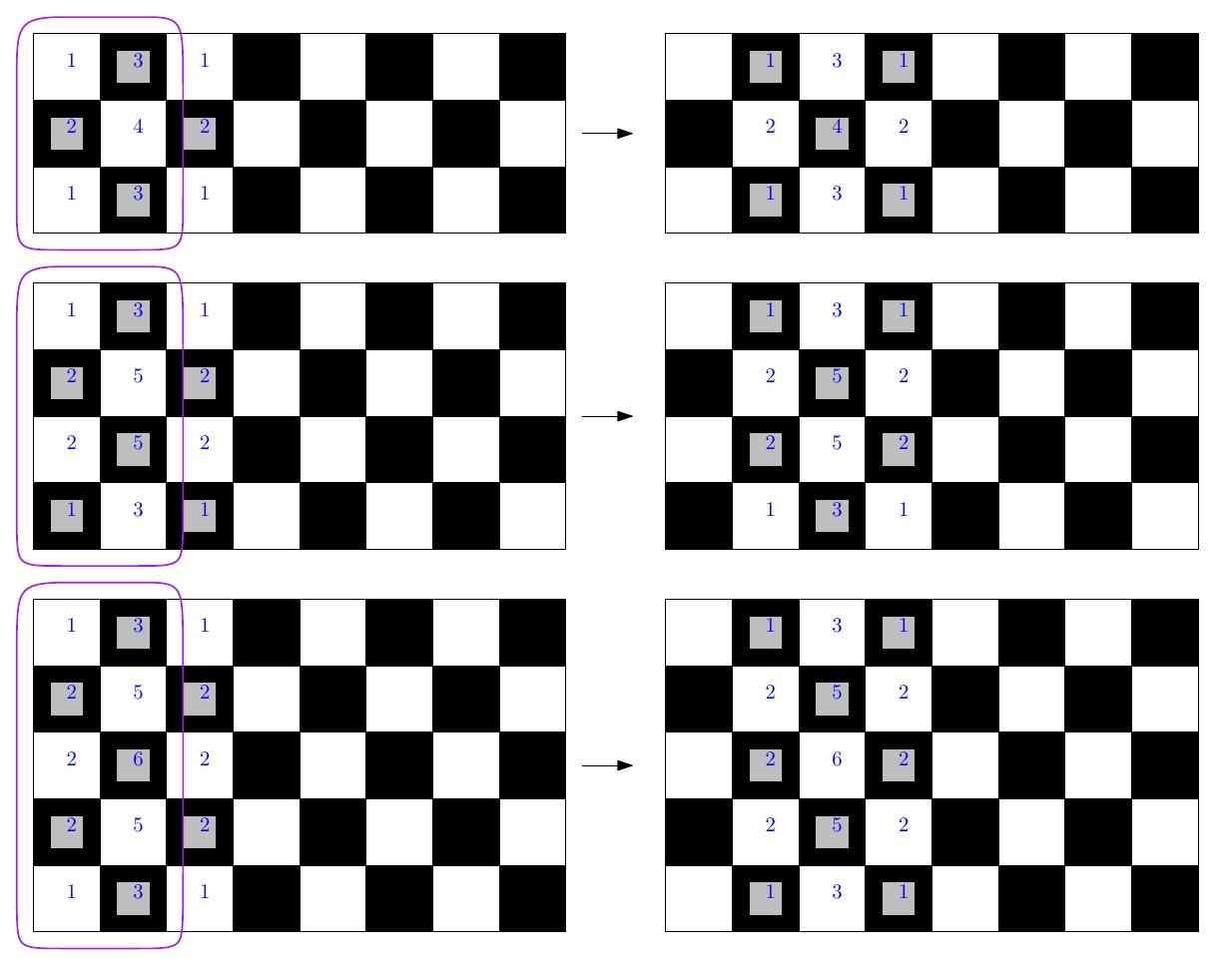}
    \caption{The divisor $D$ on $N_{3\times8}$, on $N_{4\times8}$, and on $N_{5\times8}$, together with the effect of firing the first two columns.}
    \label{fig:const-knight}
\end{figure}

Consider the effect on $D$ of firing the set $S$ of vertices in the first two columns of $N_{m\times n}$.  This will effect the first four columns of the graph as follows:
\begin{itemize}
    \item  In the first column, every vertex has as many neighbors outside $S$ as it has chips.  Thus every vertex loses all its chips.
    \item  In the second column, the top and bottom vertex have two neighbors  outside of $S$, so go from $3$ chips to $1$ chip. If $m=3$, the middle vertex has two neighbors outside of $S$, and so goes from $4$ chips to $2$ chips.  If $m\geq 4$, then the second topmost and second bottom-most vertices have $3$ neighbors outside of $S$, and so go from $5$ chips to $2$ chips; and any remaining vertex has $4$ neighbors outside of $S$, and so goes from $6$ chips to $2$ chips.  In summary, the second column becomes identical to the first column in $D$.
    \item  In the third column, the top and bottom vertex have $2$ neighbors in $S$, and so go from $1$ chip to $3$ chips.  If $m=3$, the middle vertex has $2$ neighbors in $S$, and so goes from $2$ chips to $4$ chips.  If $m\geq 4$, the second top-most and second bottom-most vertices have $3$ neighbors in $S$, and so go from $2$ chips to $5$ chips; and all other vertices have $4$ neighbors in $S$, and so go from $2$ chips to $6$ chips.  In summary, the third column becomes identical to the second column in $D$.
    \item Finally, in the third column, the top and bottom vertices each have $1$ neighbor in $S$, and all other vertices have $2$ neighbors.  Each vertex gains that many chips, and as each vertex in that column started with $0$ chips, it has become identical to the third column in $D$.
\end{itemize}
Thus the net effect of firing the first two columns is to translate all chips from $D$ to the right one vertex, as illustrated in Figure \ref{fig:const-knight}.  From here, set-firing the first three columns moves the chips to the right one vertex again.  We may continue in this fashion, set-firing the first $k$ columns for increasing values of $k$, until we have translated our chips all the way across the graph.  Each of the intermediate divisors is effective, and for every vertex in the graph at least one divisor places a chip on that vertex.  Thus $D$ has positive rank.
\end{proof}

%We can find a preliminary upper bound on the gonality of all $m \times n$ knight's graphs, using the fact that they are bipartite.

%We can easily determine the gonality of $1 \times n$ and $2 \times n$ knight's graphs for all $n \in \Z^{+}$

%Using the chip-firing interface, we also computed upper bounds on the gonalities of $3 \times n$ knight's graph for all $n$ where $7 \leq n \leq 11$. We find $\gon(N_{3 \times 7}) = 6$, $\gon(N_{3\times8}) = 6$, $\gon(N_{3\times9}) = 8$, $\gon(N_{3\times10}) = 8$,  and $\gon(N_{3\times11}) = 10$.

%%%%%%%%%% Building Intuition for Theorem %%%%%%%%%%%%%%%%%

%For $3\leq m\leq 5$, we also find bounds lying between those of Theorems \ref{theorem:knights_most_general} and \ref{theorem:bipartite_knights}.  We summarize them in the theorem below, and save the proofs for an Appendix.

The bounds from Theorem \ref{theorem:knights_most_general} with $m\geq 3$ are certainly not optimal for all $n$. For instance, since knight's graphs are bipartite, they have independence number at least $\lceil mn/2\rceil$, giving an upper bound on gonality of $mn-\lceil mn/2\rceil=\lfloor mn/2\rfloor$.  For $m=3$, this outperforms the bound of $18$ for $3\leq n\leq 11$.  The following theorem presents an alternate ``better-than-bipartite'' bound for $3\leq m\leq 5$.

\begin{theorem}\label{theorem:knights_3_and_4_and_5}  We have the following upper bounds on the gonality of knight's graphs.
\begin{itemize}
    \item[(i)] For $n\geq 4$, $\gon(N_{3\times n})\leq 2\left\lfloor \frac{n-2}{2} \right\rfloor$.
    \item[(ii)] For $n\geq 4$, $\gon(N_{4\times n})\leq 2n-4$.
    \item[(iii)] For $n\geq 5$,
$\gon(N_{5\times n}) \leq 2n+\left\lfloor\frac{n}{5}\right\rfloor$.
\end{itemize}
\end{theorem}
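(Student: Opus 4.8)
The plan is to prove each of (i)--(iii) by exhibiting an explicit effective divisor of the stated degree and showing it has positive rank; since $\gon(N_{m\times n})$ is at most the degree of any positive rank divisor, this suffices. Unlike the constant-degree divisors of Theorem~\ref{theorem:knights_most_general}, these divisors must be spread across the whole board, as their degrees grow linearly in $n$; the payoff is that they beat both the bipartite bound $\lfloor mn/2\rfloor$ and the generic bound $|V|-\alpha$ of Lemma~\ref{lemma:n-alpha(G)} in the intermediate range of $n$ where they apply. I would organize the constructions around the following structural observation. In a knight's graph with few rows, the vertical-displacement-two moves $(i,j)\leftrightarrow(i\pm 2,j\pm 1)$ link rows whose indices differ by $2$, partitioning part of the board into a few ``zigzag'' paths: for $m=3$ the pair of rows $\{1,3\}$ splits into two such paths, and for $m=4$ the pairs $\{1,3\}$ and $\{2,4\}$ give four. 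The vertical-displacement-one moves $(i,j)\leftrightarrow(i\pm 1,j\pm 2)$ then link these paths together. The chips are placed so that a single chip can be routed along and between these paths to reach every vertex.

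To verify positive rank I would, for each vertex $v$, produce a set-firing sequence transforming $D$ into an effective divisor $D'$ with $D'(v)\geq 1$; equivalently, run Dhar's burning algorithm from each chipless $v$ and check that $v$ ends up with a chip. The engine is the same ``fire an increasing family of initial column-blocks'' sweep used throughout the paper, adapted to the knight: firing progressively larger initial segments of columns pushes chips rightward across the board, and a mirror-image argument handles the leftward direction, so that every vertex is eventually covered. The degree is then obtained by summing the chip multiplicities column by column. The floor terms $2\lfloor (n-2)/2\rfloor$ and $\lfloor n/5\rfloor$ should arise from the parity of $n$ in the $m=3$ case and from an underlying period-five pattern in the $m=5$ case, together with the truncation of the chip pattern at the two boundary columns.

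The main obstacle is that, because a knight moves horizontally by either one or two squares, firing a block of columns does \emph{not} translate a divisor cleanly: chips leaving a column land in the next \emph{two} columns rather than a single one, so the naive sweep can create debt. The delicate part of each proof is therefore to choose the multiplicities so that the net flow during every set-firing leaves no vertex negative, and separately to handle the boundary columns, where knights have strictly fewer legal moves and the periodic pattern must be truncated. I expect the $m=5$ case to be the hardest: the middle row is shared between two of the path families described above, so the routing no longer decomposes into independent pieces, and tracking the period-five structure through the sweep while controlling debt at both ends is precisely what forces the extra $\lfloor n/5\rfloor$ chips. The $m=4$ case should be the mildest, since its bound $2n-4$ lies just below the value $2n$ coming from the bipartite/independence bounds, so the divisor can be taken close to the one placing a single chip off a maximum independent set and refined only to control debt near the corners.
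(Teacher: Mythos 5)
Your proposal never actually specifies the three divisors, and the verification engine you commit to --- sweeping ever-larger initial blocks of columns, as in Theorem \ref{theorem:knights_most_general} --- cannot establish bounds of this size. The obstacle you flag (knight edges cross one \emph{or two} columns, so a block-firing leaks chips into the next two columns) is not a delicate point to be tuned away; it is fatal at these degrees. When the first $k$ columns of $N_{5\times n}$ are fired, every edge crossing the cut must carry a chip out of a frontier vertex, and there are $22$ such edges ($14$ leaving column $k$, $8$ leaving column $k-1$), each requiring its endpoint to hold a chip for it. So each time the wavefront advances, about $22$ chips must sit on two adjacent columns in a prescribed pattern. Either those chips travel with the front --- which is exactly the constant-degree, front-loaded construction of Theorem \ref{theorem:knights_most_general}, of degree $10m-12=38$ for $m=5$ --- or they must be pre-placed as a surplus on top of the spread chips, giving a bound of roughly $2n+18$. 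That exceeds $2n+\left\lfloor n/5\right\rfloor$ throughout the range $5\leq n\leq 17$ where part (iii) says anything new (for larger $n$ the bound $10m-12$ is already better), and the same accounting rules out sweeps for parts (i) and (ii).

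The paper's proof uses a different engine entirely, and that idea is what is missing from your plan. The divisors are chosen so that the components of $G-\mathrm{supp}(D)$ are very simple --- isolated vertices and chains of $4$-cycles glued at vertices for $m=3$, and trees for $m=5$, with each chipped vertex having at most one neighbor in any single component --- and positive rank is verified by set-firing the \emph{complement of a component}: this pushes one chip along each edge into the component without creating debt anywhere, and (after at most one further complement-firing for the middle-row vertices) covers every vertex of that component. For $m=5$ this is precisely the criterion of Theorem 3.33 of \cite{db}; for $m=4$ the paper instead exhibits one global set-firing producing an equivalent divisor $D'$ such that every vertex in rows two and three is chipped by $D$ or $D'$, and every vertex $v$ in rows one and four has all its neighbors chipped in one of them, after which firing $V(G)-\{v\}$ finishes. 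Your zigzag-path picture of the rows is a reasonable heuristic for where to place chips, but without a complement-firing criterion of this kind you have no mechanism for certifying positive rank at degrees this small.
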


We reserve the proof of this result for Appendix \ref{appendix:knights}.  We remark that this theorem outperforms Theorem \ref{theorem:knights_most_general} for $m=3$ and $4\leq n\leq 19$; for $m=4$ and $4\leq n\leq 15$; and for $m=5$ and $5\leq n\leq 17$.  We do not expect the above to be the exact formula for gonality for all these values, but rather a demonstration that there exist strategies outperforming those of Theorem \ref{theorem:knights_most_general} for certain values of $m$ and $n$.  Still, it is worth asking whether these formulas are optimal for any values of $n$.  When $m=3$, Theorem \ref{theorem:knights_3_and_4_and_5} is optimal for $n\in\{4,5,6\}$:

\begin{proposition}\label{prop:knights_exact_3_by_small}  We have $\gon(N_{3\times 3})=3$, $\gon(N_{3\times 4})=2$, and $\gon(N_{3\times 5})=\gon(N_{3\times 6})=4$.
\end{proposition}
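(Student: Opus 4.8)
The plan is to establish each of these four small gonality values by sandwiching between an upper bound (from an explicit positive rank divisor or an earlier bound) and a lower bound (from scramble number or treewidth, since $\sn(G)\leq\gon(G)$). I would treat each graph separately, exploiting the fact that these knight's graphs are disconnected for small parameters, so that the remark on disconnected graphs lets me compute gonality component-by-component.

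\textbf{The $3\times 3$ and $3\times 4$ cases.} First I would analyze the component structure. In $N_{3\times 3}$ the center vertex $(2,2)$ has no legal knight moves, so it is isolated; by the disconnected-graph remark, an isolated vertex contributes its own gonality, and one checks the remaining eight boundary vertices form a single cycle (the classic knight's cycle on the $3\times 3$ board minus center). A cycle has gonality $2$ by Lemma \ref{lemma:common_graph_gonalities}(ii), but here I must be careful: an isolated vertex requires one chip on it just to have a chip placeable there, so I expect $\gon(N_{3\times 3})=2+1=3$, matching the claim. For $N_{3\times 4}$, Theorem \ref{theorem:knights_3_and_4_and_5}(i) with $n=4$ gives $\gon(N_{3\times 4})\leq 2\lfloor 1\rfloor=2$; for the matching lower bound I would argue the graph has at least two connected components each requiring a chip, or directly that it is not a forest (so gonality $\geq 2$ by Lemma \ref{lemma:common_graph_gonalities}(i)) while also not contained in a single tree.

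\textbf{The $3\times 5$ and $3\times 6$ cases.} For the upper bound of $4$, I would invoke Theorem \ref{theorem:knights_3_and_4_and_5}(i): at $n=5$ this gives $2\lfloor 3/2\rfloor=2$, which is too small, so I suspect the intended upper bound comes instead from an explicit divisor or from the bipartite bound $\lfloor mn/2\rfloor$ combined with a component analysis; I would recompute carefully, as $N_{3\times 5}$ and $N_{3\times 6}$ may be connected or have few components. The cleaner route for the upper bound is to exhibit a degree-$4$ positive rank divisor directly and verify via Dhar's burning algorithm that every vertex can receive a chip. For the lower bound of $4$, I would construct a scramble of order $4$: choose four or more pairwise edge-disjoint-path-rich eggs, compute the hitting number and show the egg-cut number is at least $4$ by producing four edge-disjoint paths between each pair of eggs, exactly as in the king's graph proofs.

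\textbf{Main obstacle.} The hardest part will be the lower bounds, specifically verifying $e(\mathcal{S})\geq 4$ for $N_{3\times 5}$ and $N_{3\times 6}$, because the sparse and irregular knight-move adjacency makes it nontrivial to locate four pairwise edge-disjoint paths between a chosen pair of eggs; the knight's graph lacks the clean horizontal/diagonal path families available in king's graphs. I would handle this by explicit case analysis on a small number of well-chosen eggs, likely verified by hand or by the brute-force Dhar's-algorithm check referenced in the background section, since these graphs are small enough for a finite computation. A secondary subtlety is correctly accounting for isolated or pendant vertices in the component decomposition, since miscounting components directly changes the gonality by Remark on disconnected graphs.
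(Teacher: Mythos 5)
Your overall skeleton (component analysis for $3\times 3$, an explicit low-degree divisor plus a trivial lower bound for $3\times 4$, and divisor-plus-scramble sandwiches for $3\times 5$ and $3\times 6$) matches the paper's, but there is one genuine logical gap: your upper bound for $N_{3\times 4}$ is circular. You invoke Theorem \ref{theorem:knights_3_and_4_and_5}(i) at $n=4$, but the paper's proof of that theorem treats only $n$ odd and $n\geq 6$ even, explicitly deferring the case $n=4$ to Proposition \ref{prop:knights_exact_3_by_small} --- the very statement being proved --- so it cannot be used here. What is needed (and what the paper does) is an explicit degree-$2$ positive rank divisor: redraw $N_{3\times 4}$ as a $3\times 4$ grid graph with the middle-row edges deleted, place the two chips so that firing the first $k$ columns for increasing $k$ sweeps them onto the top and bottom vertices of every column, and reach a middle-row vertex $v$ by first covering its two neighbors and then set-firing $V(G)-\{v\}$. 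Relatedly, the first branch of your proposed lower bound (``at least two connected components'') fails because $N_{3\times 4}$ is connected; your second branch --- the graph contains cycles, hence is not a tree, hence has gonality at least $2$ by Lemma \ref{lemma:common_graph_gonalities}(i) --- is exactly the paper's argument and is the one to keep.

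The remaining pieces are sound and essentially the paper's. The $3\times 3$ analysis (isolated center vertex plus an $8$-cycle, gonality $2+1=3$) is identical. For $3\times 5$ you correctly diagnosed that the formula of Theorem \ref{theorem:knights_3_and_4_and_5}(i) as printed gives an upper bound of $2$, which would contradict $\gon(N_{3\times 5})=4$; this is a typo in the theorem statement, since the appendix proof for odd $n$ constructs a divisor of degree $2\lfloor n/2\rfloor$, i.e.\ degree $4$ at $n=5$ (one chip on the top and bottom vertices of columns $2$ and $4$), so your plan to exhibit a degree-$4$ positive rank divisor directly is the right repair. For the lower bounds, your order-$4$ scrambles are what the paper uses, but you should borrow its two labor-saving devices, which address precisely the obstacle you flag as the hardest step: first, scramble number is invariant under smoothing over $2$-valent vertices \cite[Proposition 4.6]{new_lower_bound}, so one may pass from $N_{3\times 5}$ to a much smaller smoothed graph before hunting for four pairwise edge-disjoint paths between eggs; second, scramble number is subgraph monotone \cite[Proposition 4.5]{new_lower_bound}, so $\sn(N_{3\times 6})\geq \sn(N_{3\times 5})=4$ comes for free and no scramble on $N_{3\times 6}$ (nor any brute-force computation there) needs to be built at all.
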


We prove this proposition in Appendix \ref{appendix:knights}.  Using the Chip-Firing Interface \cite{chip_firing_interface}, we also verify that Theorem \ref{theorem:knights_3_and_4_and_5} gives the correct gonality for $m=3$ and $7\leq n\leq 11$ and for $m=4$ and $4\leq n\leq 6$, although it is unable to compute gonality for larger knight's graphs.

We now pivot to lower bounds on gonality.  In contrast to bishop's graphs, we can use scramble number to find a lower bound on $\gon(N_{m\times n})$ with the same order of magnitude as our upper bound from Theorem \ref{theorem:knights_most_general}. 

\begin{proposition}\label{prop:knights_sn}  Let $m\geq 4$ and let $\lfloor n/3\rfloor \geq 6m-8$.  Then $\sn(N_{m\times n})= 6m-8$.
\end{proposition}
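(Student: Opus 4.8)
The plan is to prove matching bounds, using screewidth for the upper bound and an explicit scramble for the lower bound. The key quantity throughout is the number of edges of $N_{m\times n}$ crossing the boundary between two consecutive columns (equivalently, between two adjacent blocks of columns, since a knight's move changes the column index by at most $2$). Splitting these crossing edges by move type: the column-difference-one moves shift the row by $\pm 2$ and contribute $2(m-2)=2m-4$ crossing edges, while the column-difference-two moves shift the row by $\pm 1$ and contribute $2\cdot 2(m-1)=4m-4$ crossing edges, for a total of $(2m-4)+(4m-4)=6m-8$. This same number will appear as the link adhesion of a tree-cut decomposition and as the number of edge-disjoint paths between eggs.

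For the upper bound $\sn(N_{m\times n})\le 6m-8$ I would invoke $\sn(N_{m\times n})\le\scw(N_{m\times n})$ and exhibit a tree-cut decomposition of width $6m-8$. Take the tree $T$ to be a path whose $k$-th bag holds all vertices in columns $3k-2,3k-1,3k$ (the last bag possibly smaller). Because each bag spans three columns and knight's moves span at most two, no edge tunnels through a bag, so each bag contributes only its $3m$ vertices, while each link carries exactly the $6m-8$ edges counted above. Since $3m\le 6m-8$ for $m\ge 3$, the width is $6m-8$, so $\scw(N_{m\times n})\le 6m-8$. (This half needs only $m\ge 3$; the hypothesis on $n$ is used only below.)

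For the lower bound I would build the scramble $\mathcal S$ whose eggs are the consecutive blocks of three columns. There are $\lfloor n/3\rfloor$ of them, each connected for $m\ge 4$ (the $m\times 3$ knight's graph is connected, which can be checked directly), and since they are disjoint, $h(\mathcal S)=\lfloor n/3\rfloor\ge 6m-8$. The heart of the argument is $e(\mathcal S)\ge 6m-8$, which I would establish by producing $6m-8$ pairwise edge-disjoint paths between any two eggs $E_a$ and $E_b$ using two families of rightward zigzags. The \emph{steep} zigzags alternate $(+2,+1)$ and $(-2,+1)$ moves, oscillating between rows $i$ and $i+2$ and advancing one column per step; for each $i\in\{1,\dots,m-2\}$ a down-starting and an up-starting path are edge-disjoint, giving $2(m-2)$ paths that use exactly the column-difference-one edges. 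The \emph{shallow} zigzags alternate $(+1,+2)$ and $(-1,+2)$ moves, oscillating between rows $i$ and $i+1$ and advancing two columns per step; splitting by starting direction \emph{and} by the parity of the starting column yields four edge-disjoint paths per $i\in\{1,\dots,m-1\}$, for $4(m-1)$ paths using exactly the column-difference-two edges. Steep and shallow edges differ in row displacement and so never coincide, so all $(2m-4)+(4m-4)=6m-8$ paths are pairwise edge-disjoint; each crosses every intermediate boundary exactly once, and since every egg is three columns wide each path necessarily meets both $E_a$ and $E_b$. Any egg-cut separating two eggs must delete an edge from each of these paths, so $e(\mathcal S)\ge 6m-8$; combined with $h(\mathcal S)\ge 6m-8$ this gives $||\mathcal S||\ge 6m-8$ and hence $\sn(N_{m\times n})\ge 6m-8$.

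The main obstacle is the path construction for the lower bound: one must verify the exact counts ($2m-4$ steep and $4m-4$ shallow paths), and in particular confirm that the four shallow paths per row pair are genuinely edge-disjoint, since it is the parity-of-starting-column refinement that pushes the total from $4m-6$ up to the needed $6m-8$. One must also check that each path stays on the board (the row oscillations remain in $\{1,\dots,m\}$ precisely because $i\le m-2$ for steep and $i\le m-1$ for shallow paths) and that it truly begins in $E_a$ and ends in $E_b$. A secondary technical point is verifying connectivity of each three-column block so that it is a legitimate egg, which is where the hypothesis $m\ge 4$ enters.
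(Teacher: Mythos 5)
Your proposal is correct and follows essentially the same route as the paper: a scramble whose eggs are blocks of three columns, with hitting number $\lfloor n/3\rfloor$ and egg-cut number bounded below by the same $6m-8$ pairwise edge-disjoint zigzag paths ($2m-4$ advancing one column per step, $4m-4$ advancing two columns per step), combined with a path-shaped tree-cut decomposition of column blocks giving $\sn(N_{m\times n})\leq \scw(N_{m\times n})\leq 6m-8$. The only cosmetic differences are that the paper uses two-column bags (with a three-column final bag when $n$ is odd) rather than three-column bags, and parametrizes the path families by starting vertex and initial move rather than by row pair and starting-column parity.
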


\begin{proof}  Construct a scramble $\mathcal{S}$ whose first egg consists of the first three columns of $N_{m\times n}$, the second egg the next three columns, and so on; if $n$ is not a multiple of $3$, then any leftover columns are included in the same egg as the last complete set of four columns.  Note that $N_{m\times 3}$ is connected since $m\geq 4$, so this is a valid scramble.

As the eggs are disjoint, we have $h(\mathcal{S})=|\mathcal{S}|=\lfloor n/3\rfloor$.  To lower bound the egg-cut number by $k$, we can construct $k$ pairwise edge-disjoint paths, each of which intersects every egg. First we consider paths stretching across the graph alternating between $(2,1)$ and $(2,-1)$ moves.  We can find $4m-4$ pairwise edge-disjoint such paths:  start at any vertex in the first two columns, and choose whether to start with a $(2,1)$ move or a $(2,-1)$ move.  The only restriction is that a vertex in the top row cannot perform a $(2,1)$ move, and a vertex in the bottom row cannot perform a $(2,-1)$ move, accounted for by the $-4$.  Then we find $2m-4$ more paths, starting from a vertex in the first column and alternating $(1,2)$ and $(1,-2)$ moves in some order.  There are $2m$ ways to choose a vertex and a starting move, with $4$ excluded since the top two and bottom two vertices have only one option for their first move.

This gives a total of  $4m-4+2m-4=6m-8$ pairwise edge-disjoint paths.  Any egg-cut must include at least one edge from each of these paths, so $e(\mathcal{S})\geq 6m-8$.  We conclude that $\sn(N_{m\times n})\geq ||\mathcal{S}||\geq \min\{\lfloor n/4\rfloor,6m-8\}=6m-8$.

For an upper bound, we consider the tree-cut decomposition $\mathcal{T}$ whose bags are pairs of adjacent columns (first and second columns in one bag, third and fourth column in the next, and so on), except that the last bag has three columns if $n$ is odd.  Arrange these bags in a path, from left to right.  There are no tunneling edges, so the contribution of each bag is at most $3m$; and between any two adjacent bags, every edge comes from one of the $6m-8$ paths above, and each such path contributes exactly one edge. Since $m\geq 4$, we have $6m-8\geq 3m$, and so the width of this tree-cut decomposition is $6m-8$.  Thus we have $\sn(N_{m\times n})\leq \textrm{scw}(N_{m\times n})\leq w(\mathcal{T})=6m-8$.  This completes the proof.
\end{proof}

Thus for $m\geq 4$ and $n$ much larger than $m$, we have $6m-8\leq \gon(N_{m\times n})\leq 10m-12$ since scramble number is a lower bound on gonality.  This means that our upper bound is, in the worst possible case, approximately a factor of $5/3$ larger than the actual gonality.  However, scramble number cannot be used to narrow this gap any further.

\section{Toroidal Knight's Graphs} 
\label{section:toroidal_knights_graphs}
We can also find a bound on the gonality of the $m\times n$ toroidal knight's graph solely in terms of $m$, where $m\leq n$.

\begin{theorem}\label{theorem:main_toroidal_knights}
    The gonality of the $m\times n$ toroidal knight's graph is bounded by\[\gon(N_{m\times n}^t)\leq\begin{cases}
        20&\text{ for } m = 2\\
        20m&\text{ for } m \geq 3.
    \end{cases}\]
\end{theorem}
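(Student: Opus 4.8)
The plan is to follow the template already used for toroidal king's graphs: exhibit an explicit effective divisor of the claimed degree and prove it has positive rank by set-firing a nested sequence of column-intervals that spreads chips all the way around the torus. The essential new feature, compared to the non-toroidal knight's graph of Theorem \ref{theorem:knights_most_general}, is that on a torus a column-interval $\{c-t,\dots,c+t\}$ leaks chips out \emph{both} of its ends when fired, so the one-directional ``translate the divisor'' argument is unavailable. Instead I would build a divisor that, upon firing, splits into two oppositely-traveling ``waves,'' one sweeping left and one sweeping right. This bidirectional spreading is exactly what doubles the cost relative to the $10m$-type bound of Theorem \ref{theorem:knights_most_general}, and it is what forces the degree up to $20m$.

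First recall the column-level adjacency. From a vertex in column $j$, the knight reaches only columns $j\pm 1$ (via the $(\pm2,\pm1)$ moves) and $j\pm 2$ (via the $(\pm1,\pm2)$ moves). For a divisor that is \emph{column-uniform} (the same value on every vertex of a given column), firing a whole column $c$ sends a fixed number of chips to each of columns $c\pm 1$ and $c\pm 2$, and this uniformity is preserved by every set-firing of full columns; so the whole analysis reduces to a one-dimensional chip-firing computation in the columns. For $m\ge 5$ the counts are $2$ chips to each of $c\pm1$ and $c\pm2$; for $m=3$ the same per-column counts hold (the row structure differs, but the column flow does not); for $m=4$ two diagonal moves collapse, giving $1$ chip to each of $c\pm1$ and $2$ to each of $c\pm2$ (valence $6$); and for $m=2$ one gets $1$ chip to each of $c\pm1,c\pm2$ (valence $4$).

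With this reduction, I would place the initial divisor on three consecutive columns $c-1,c,c+1$: for $m\ge 3$ put $4,12,4$ chips on every vertex of these columns (degree $20m$), and for $m=2$ put $2,6,2$ chips on every vertex (degree $20$). Firing $\{c\}$ and then $\{c-1,c,c+1\}$ splits this symmetric pile into two copies of a single-direction traveling wave — profile $(2,6,2)$ per vertex for $m\ge 3$ (respectively $(1,3,1)$ for $m=2$) — one centered just left of $c$ and one just right, separated by an emptied column. Continuing to fire the nested symmetric intervals $\{c-t,\dots,c+t\}$ then propagates each wave steadily outward: from the perspective of either front, everything within a knight's reach on the interior side has been fired, which is precisely the prefix-firing situation under which the wave profile reproduces itself and stays effective. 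The two fronts move apart and meet on the far side of the torus, so every column — and hence, by column-uniformity, every vertex — carries a chip in some effective divisor equivalent to $D$. Thus $D$ has positive rank and $\gon(N^t_{m\times n})$ is at most $\deg(D)$.

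The hard part will be the debt-free bookkeeping, and in particular the small-$m$ degeneracies. One must check that the $(2,6,2)$ wave (and its $m=4$ and $m=2$ analogues) really is reproduced by each interval-firing and never sends any vertex into debt; this is a short but case-sensitive verification, since the per-column flow changes at $m=4$ and $m=2$, and since for $m=3$ the wraparound identifies rows in a way that must be checked not to destroy column-uniformity. I would settle the generic case $m\ge 5$ cleanly, then confirm that the same $(4,12,4)$ divisor — which only ever carries \emph{more} chips than the minimal wave requires when $m=4$ — remains effective and still sweeps the torus, and handle $m=2$ with the $(2,6,2)$ divisor separately. Finally, a few very small boards (where the growing intervals wrap before the waves fully separate) should be dispatched directly, for instance via the trivial bound $\gon\le |V|-1$ or via Lemma \ref{lemma:n-alpha(G)}, since there the claimed constant already exceeds the number of vertices.
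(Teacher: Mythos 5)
Your proposal is correct and takes essentially the same approach as the paper: a degree-$20m$ (resp.\ degree-$20$ for $m=2$) column-uniform divisor, spread by set-firing nested symmetric intervals of columns so that two oppositely traveling waves of per-vertex profile $(2,6,2)$ (resp.\ $(1,3,1)$) sweep around the torus, with tiny boards handled trivially. The only cosmetic difference is that you start from a concentrated $(4,12,4)$ pile on three columns that splits into the two waves after the first two firings, whereas the paper places the two waves directly as the pattern $2,6,2,2,6,2$ on six columns and fires the middle four columns, then six, and so on.
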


\begin{proof}
    For the $m=2$ case, if $n\leq 5$ the claim is immediate as we can place a chip on every vertex. 
 For $n\geq 6$, start at any vertex on the top row and place, from left to right, $1$, $3$, $1$, $1$, $3$, and $1$ chip; duplicate this directly below on the bottom row.  The resulting divisor $D$ is illustrated in Figure \ref{fig:2xn-tor-knight}, along with several equivalent divisors.  Firing the middle four columns has the net effect of translating the four $1,3,1$ collections of chips outward.  Firing the middle six repeats this, and so on, allowing the divisor to eventually move chips to any vertex.  Thus $r(D)>0$, and $\gon(N^t_{2\times n})\leq \deg(D)=20$. 
    \begin{figure}[hbt]
        \centering
        \includegraphics{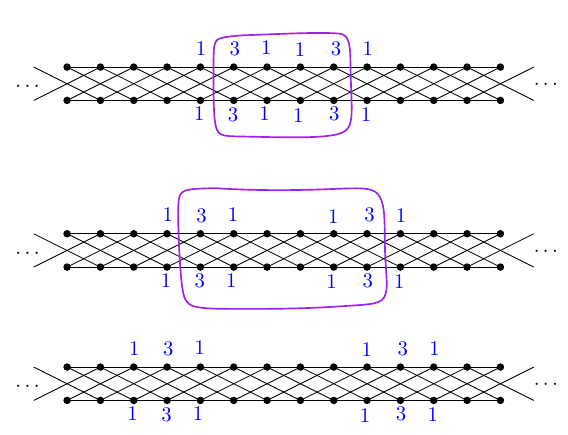}
        \caption{Three equivalent divisors on $N^t_{2\times n}$.}
        \label{fig:2xn-tor-knight}
    \end{figure}

For $m\geq 3$, a similar divisor on $N^t_{m\times n}$ can be built when $n\geq 6$ ($n\leq 5$ is handled by the small number of vertices), except with a pattern of $2$, $6$, $2$, $2$, $6$, $2$ replicated along the $m$ rows.  Again, set-firing the middle four columns, then the middle six, and so on translates chips along the graph, giving us a positive rank divisor of degree $20m$.
\end{proof}

As for knight's graphs, these divisors are provably suboptimal for small values of $n$. For choices of $m$ and $n$ with $mn<20m$, simply placing a chip on every vertex would yield a winning divisor of lower degree. Stronger bounds can be found when $m$ and $n$ are both small and even, as the graph is bipartite.  It remains an open question which, if any, toroidal knight's graphs have gonality equal to our upper bound.

As we did for traditional knight's graphs, we test the strength of our upper bound using scramble number.  

\begin{proposition} Let $m\geq 5$ and $\lfloor n/3\rfloor\geq 12m$.  Then $\sn(N^t_{m\times n})=12m$.
\end{proposition}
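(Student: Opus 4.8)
The plan is to follow the template of Proposition~\ref{prop:knights_sn} for ordinary knight's graphs: establish $\sn(N_{m\times n}^t)\geq 12m$ by exhibiting a column scramble of order $12m$, and establish $\sn(N_{m\times n}^t)\leq 12m$ by exhibiting a path-like tree-cut decomposition of width $12m$, invoking $\sn\leq\scw\leq w(\mathcal{T})$. Throughout I would use that for $m\geq 5$ the eight knight moves $(\pm1,\pm2),(\pm2,\pm1)$ land on eight distinct vertices modulo $m$, so $N_{m\times n}^t$ is $8$-regular and each egg of three consecutive columns induces a connected subgraph; the hypothesis $\lfloor n/3\rfloor\geq 12m$ guarantees enough columns that the arcs below are wide.

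For the lower bound I would take the scramble $\mathcal{S}$ whose eggs are consecutive blocks of three columns, absorbing any leftover columns into the last egg. Disjointness gives $h(\mathcal{S})=\lfloor n/3\rfloor\geq 12m$, so it remains to show $e(\mathcal{S})\geq 12m$. Here the essential difference from the cylindrical case appears: the columns of a torus form a cycle rather than a path, so a single family of paths winding once around the torus (and hence meeting every egg) is capped at the $6m$ edges crossing any one vertical line, and cannot by itself certify an egg-cut number above $6m$. Instead I would argue pairwise: for an arbitrary pair of eggs $A,B$ I would produce $12m$ pairwise edge-disjoint $A$--$B$ paths by routing $6m$ through the right arc of columns between $A$ and $B$ and $6m$ through the left arc. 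Since the two arcs share no edges, and each arc admits $6m$ edge-disjoint zigzag paths (alternating $(2,1)/(-2,1)$ and $(1,2)/(-1,2)$ moves to travel rightward, and the column-reversed moves to travel leftward, now with no boundary loss thanks to the wraparound), this gives $12m$ edge-disjoint $A$--$B$ paths. Because any egg-cut separates \emph{some} pair of eggs, it must delete an edge from each of their $12m$ connecting paths, whence $e(\mathcal{S})\geq 12m$ and $||\mathcal{S}||=\min\{\lfloor n/3\rfloor,12m\}=12m$.

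For the upper bound I would take the tree-cut decomposition $\mathcal{T}$ whose bags are consecutive pairs of columns arranged in a path from left to right. The new feature is the family of ``seam'' edges a knight makes across the boundary between column $n$ and column $1$; in a path decomposition these run from the first bag to the last, so each tunnels through every intermediate bag and crosses every link. A direct count gives $6m$ such seam edges (the $(\pm2,1)$ moves from column $n$, together with the $(\pm1,2)$ moves from columns $n-1$ and $n$). Every link therefore carries the $6m$ local crossing edges together with the $6m$ seam edges, for a link count of $12m$, while each bag carries its $2m$ vertices plus at most $6m$ tunneling seam edges, contributing at most $8m<12m$. Hence $w(\mathcal{T})=12m$, which yields $\sn(N_{m\times n}^t)\leq\scw(N_{m\times n}^t)\leq 12m$, and combined with the lower bound gives $\sn(N_{m\times n}^t)=12m$.

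The main obstacle I anticipate is the careful verification of edge-disjointness in the $12m$ paths of the lower bound. Travelling leftward and rightward use the same four undirected edge types, so disjointness is \emph{not} automatic from the move pattern: it must come from the fact that the rightward paths live entirely in the right arc and the leftward paths entirely in the left arc, anchored at distinct edges leaving $A$ and $B$. Making this routing precise for every pair of eggs---including adjacent eggs, where one arc degenerates into the $6m$ direct $A$--$B$ edges and the other wraps the long way around---and confirming that the per-arc capacity of $6m$ is actually realized by explicit zigzags is the technical heart of the argument. By comparison, the seam-edge bookkeeping in the upper bound is routine once the count of $6m$ seam edges is pinned down.
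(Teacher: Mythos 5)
Your proposal is correct and follows essentially the same route as the paper: the same scramble with disjoint three-column eggs whose egg-cut number is certified by $6m$ rightward plus $6m$ leftward pairwise edge-disjoint zigzag paths between an arbitrary pair of eggs, and the same path-shaped tree-cut decomposition into column pairs whose width is $12m$ once the $6m$ wraparound edges are counted as tunneling/link contributions. The points you flag as delicate (edge-disjointness of the two directional families, the $m\geq 5$ requirement to keep the $(1,\pm 2)$-type moves distinct) are exactly the points the paper relies on as well.
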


\begin{proof}  For the lower bound, we construct a scramble  $\mathcal{S}$ identical to that from the proof of Proposition \ref{prop:knights_sn}, with disjoint eggs consisting of $3$ columns each (with one egg receiving up to two more columns, if $n$ is not a multiple of $3$).  The hitting number of this scramble is given by the number of eggs: $h(\mathcal{S})=|\mathcal{S}|=\lfloor n/3\rfloor$.

To lower bound the egg-cut number, we consider an arbitrary pair of eggs, $E_1$ and $E_2$.  Starting in $E_1$, we construct $6m$ paths from $E_1$ to $E_2$ that travel to the right:  $4m$ starting from any vertex in the second and third column of $E_1$ and alternating $(2,1)$ and $(2,-1)$ moves in some order; and $2m$ starting from any vertex in the first column of $E_1$ and alternating $(1,2)$ and $(1,-2)$ in some order\footnote{To ensure these paths are different, we need our $m\geq 5$ assumption; if $m=4$, a $(1,2)$ move is identical to a $(1,-2)$ move.}.  We can similarly construct $6m$ paths from $E_1$ to $E_2$ travelling to the left:  $4m$ starting from any vertex in the first and second column of $E_1$ and alternating $(-2,1)$ and $(-2,-1)$ in some order; and $2m$ starting from any vertex in the first column of $E_1$ and alternating $(-1,2)$ and $(-1,-2)$ in some order.  These $12m$ paths are pairwise edge-disjoint, and any egg-cut separating $E_1$ and $E_2$ must include an edge from each. Thus we have $e(\mathcal{S})\geq 12m$, and so $\sn(N^t_{m\times n})\geq ||\mathcal{S}||\geq \min\{\lfloor n/3\rfloor, 12m\}=12m$.

For the upper bound on scramble number, we construct the exact same tree-cut decomposition $\mathcal{T}$ as in the non-toroidal case, letting our bags be groups of $2$ adjacent columns except for one final bag possibly consisting of $3$ columns.  The edges between adjacent bags increases from $6m-8$ to $6m$ due to the toroidal nature of the graph.  All edges from the leftmost bag to the rightmost bag are now tunneling edges; there are $6m$ of these, adding a weight of $6m$ to each link and each middle bag. Thus the width of this tree-cut decomposition is $12m$, giving $\sn(N^t_{m\times n})\leq \scw(G)\leq w(\mathcal{T})=12m$.  This completes the proof.
\end{proof}

Thus for $m\geq 5$ and $n$ much larger than $m$, we have $12m\leq \gon(N^t_{m\times n})\leq 20m$.  As with the non-toroidal knight's graph, we have at worst a factor of $5/3$ overestimate of gonality with our current upper bound; and no stronger results can be gleaned from scramble number.  We remark that for $m$ odd, we may modify our scramble to have eggs consisting of two adjacent columns (which form connected subgraphs), which allows us to loosen our restriction on $n$ to  $\lfloor n/2\rfloor \geq 12m$.

\section{Future Directions}\label{section:future_directions}

A major open question is whether our upper bounds on gonality are in fact equal to gonality for $m\times n$ chessboards, at least for $n$ sufficiently large compared to $m$.  The answer is yes for king's, toroidal king's, and toroidal bishop's, but remains open for bishop's, knight's, and toroidal knight's.  Such a result would not be provable using scramble number, so new lower bounds on gonality may need to be developed to prove such a result.

Another direction for future work could be modifying our chessboards, for instance by considering higher dimensional analogs of these graphs.  Even in $3$-dimensions, there is little known; for instance, the gonality the $m\times n\times \ell$ rook's graph $R_{m\times n\times l}$ is only known for \(\min\{m,n,\ell\}=2\) \cite[Theorem 1.3]{rooks_gonality}.  Another modification could be to the ``glueing'' rules; for instance, instead of toroidal chessboards, one could consider half-toroidal chessboards, M\"{o}bius chessboards, or Klein-bottle chessboards.

\bibliographystyle{plain}

\appendix

\section{Computations for king's graphs}
\label{appendix:kings}

\begin{lemma}\label{lemma:appendix_kings_scramble}
The scramble $\mathcal{S}$ on $K_{3\times 5}$ illustrated in Figure \ref{figure:scw_for_k34} satisfies $e(\mathcal{S})\geq 7$.   
\end{lemma}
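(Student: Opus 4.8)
The plan is to bound $e(\mathcal{S})$ from below by exhibiting, for every pair of eggs, a large family of pairwise edge-disjoint paths joining them. Recall that any egg-cut $E'$ disconnects $K_{3\times 5}$ into components, at least two of which each contain an egg; fix two such eggs $E_i$ and $E_j$, which after deleting $E'$ lie in different components. If I can produce $7$ pairwise edge-disjoint paths from $E_i$ to $E_j$ in $K_{3\times 5}$, then $E'$ must delete at least one edge from each of them (otherwise $E_i$ and $E_j$ would remain connected), and since the paths are edge-disjoint these are $7$ distinct edges, so $|E'|\geq 7$. As the egg-cut was arbitrary, this yields $e(\mathcal{S})\geq 7$, which is exactly the claim.

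So the whole argument reduces to the combinatorial task of finding $7$ pairwise edge-disjoint $E_i$-to-$E_j$ paths for each pair of eggs. The natural building block is the bundle of paths already used in the proof of Theorem \ref{theorem:gonality_kings}: between any two distinct columns of $K_{3\times n}$ there are $3m-2 = 7$ pairwise edge-disjoint paths when $m=3$, namely the $3$ horizontal paths (one along each row) together with the $2m-2 = 4$ diagonal ``zig-zag'' paths alternating $(1,1)$ and $(1,-1)$ king's moves. I would use these column-to-column bundles as a backbone: whenever $E_i$ and $E_j$ occupy columns that are separated horizontally, I route the seven paths along the intervening columns and let them enter each egg, truncating each path at its first vertex inside the egg.

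The bulk of the work, and the main obstacle, is verifying that this can be carried out for the specific eggs drawn in the figure, especially for pairs of eggs that are small or close together. Singleton or two-vertex eggs sitting near the boundary are the delicate cases, since corner vertices have valence only $3$ and boundary vertices valence only $5$ in $K_{3\times 5}$, whereas $7$ edge-disjoint paths can emanate from an egg only if at least $7$ edges leave it; so the generic column bundle cannot simply terminate at such a vertex, and the paths must instead be rerouted through the high-valence middle row to guarantee that seven edge-disjoint routes actually reach inside each egg. I would therefore organize the proof by the relative position of the two eggs -- whether they share a column band or are separated by at least one column -- and dispatch the finitely many adjacency configurations by listing explicit paths and checking edge-disjointness directly. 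I expect the hardest instances to be two eggs in horizontally adjacent positions against the top or bottom edge of the board, where the scarcity of edges at low-valence vertices forces the most careful stitching of the seven paths.
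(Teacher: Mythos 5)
Your proposal follows the paper's proof essentially verbatim: the paper likewise reduces the egg-cut bound to exhibiting $7$ pairwise edge-disjoint paths between every pair of eggs, reuses the $3+4=7$ column-to-column bundle from Theorem \ref{theorem:gonality_kings} for pairs of column eggs, and disposes of the remaining finitely many pairs (egg sizes $1$ and $1$, $1$ and $2$, $1$ and $3$, $2$ and $2$, $3$ and $3$) by explicit paths drawn in Figure \ref{figure:sample_eggs_k35}, extended to unpictured pairs by symmetry and by splicing in the column bundles. The finite case analysis you defer is exactly what that figure carries out, so your outline matches the paper's argument.
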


\begin{proof}
    To prove this it will suffice to show that between any pair of eggs, there exist at least $7$ pairwise edge-disjoint paths between them (this is because any egg-cut separating a pair of eggs would have to include an edge from any such collection of paths).

     For pairs of eggs that are each a complete column, the argument from the proof of Theorem \ref{theorem:gonality_kings} gives $3\cdot 3-2=7$ such paths.  The remaining cases to handle are where the two eggs have the following numbers of vertices:  $1$ and $1$, $1$ and $2$, $1$ and $3$, $2$ and $2$, and $3$ and $3$.  Representative pairings of each type are shown in Figure \ref{figure:sample_eggs_k35}, with each row containing $7$ pairwise edge-disjoint paths between that pair of eggs.  These paths can be adapted for all unpictured pairs of eggs either using the symmetry of the graph, or by extending the paths across columns using the $7$ paths from the proof of Theorem \ref{theorem:gonality_kings}.

\begin{figure}[hbt]
    \centering
    \includegraphics[scale=0.9]{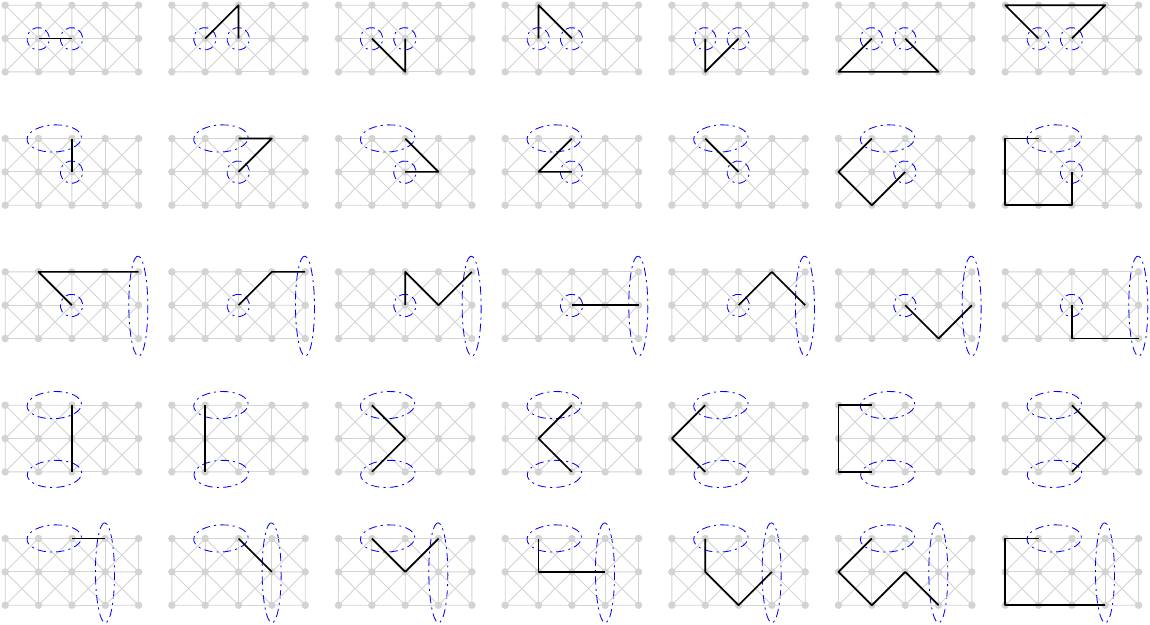}
    \caption{Five pairs of eggs from the scramble on $K_{3\times 5}$, and for each pair $7$ pairwise edge-disjoint paths connecting them.}
    \label{figure:sample_eggs_k35}
\end{figure}
\end{proof}

\begin{lemma}\label{lemma:appendix _k34}
    We have $\gon(K_{3\times4})=7$.
\end{lemma}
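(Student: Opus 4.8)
The upper bound $\gon(K_{3\times 4})\le 7$ is immediate from Theorem~\ref{theorem:gonality_kings} with $m=3$, so the content of the lemma is the matching lower bound $\gon(K_{3\times 4})\ge 7$. Since we already know $\sn(K_{3\times4})=6$, scramble number cannot reach $7$ on its own, so the plan is to argue directly with reduced divisors and Dhar's burning algorithm. The goal is to show that no effective divisor of degree $6$ on $K_{3\times4}$ has positive rank. It suffices to treat degree \emph{exactly} $6$: if a divisor of degree $d<6$ had positive rank, then adding any effective divisor of degree $6-d$ would yield a positive rank divisor of degree $6$, since adding chips cannot decrease rank. The key criterion is that an effective divisor $D$ has positive rank if and only if, for every vertex $q$, the $q$-reduced divisor equivalent to $D$ carries at least one chip on $q$. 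Thus it is enough to prove: for \emph{every} effective divisor $D$ of degree $6$, there exists a vertex $q$ whose $q$-reduced equivalent divisor is chipless at $q$.

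To make this a finite check, I would fix a corner $q_0$ and work with $q_0$-reduced representatives, which biject with linear equivalence classes of degree-$6$ divisors. A $q_0$-reduced divisor satisfies $0\le D(v)\le \val(v)-1$ for every $v\ne q_0$; in $K_{3\times4}$ the four corners have valence $3$, the six boundary vertices $\{(1,2),(1,3),(3,2),(3,3),(2,1),(2,4)\}$ have valence $5$, and the two central vertices $(2,2),(2,3)$ have valence $8$. Hence any corner other than $q_0$ holds at most $2$ chips, each boundary vertex at most $4$, and each central vertex at most $7$, while $D(q_0)=6-\sum_{v\ne q_0}D(v)$. Together with $\deg(D)=6$ this leaves only finitely many distributions, and the order-$4$ symmetry group of $K_{3\times4}$ (the horizontal reflection swapping rows $1$ and $3$, the vertical reflection reversing the columns, and their composition) collapses these into a manageable list of cases.

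The heart of the argument is the burning step. Since $\deg(D)=6$ and $|V|=12$, at least six vertices are chipless; I would select such a vertex $v$ and run Dhar's algorithm from it. A set $U\not\ni v$ can remain unburned only if every vertex $u\in U$ has at least as many chips as it has edges to the burned region, which forces $|E(U,V\setminus U)|\le\sum_{u\in U}D(u)\le 6$. The obstruction—and the reason this is genuinely more delicate than the scramble bound—is that sets of edge-cut exactly $6$ really do exist; for instance a corner together with an adjacent boundary vertex, such as $\{(1,1),(1,2)\}$, has edge-cut $6$. A purely static cut count therefore only reproduces the value $6$. To break past it I would use the dynamic part of Dhar: whenever the fire stabilizes on such a firewall $U$ of cut size $6$, every edge of the cut must be saturated, which pins all six chips onto the boundary of $U$; set-firing $U$ then redistributes those chips, and re-running the burning process from $v$ I would show the fire now consumes the whole graph. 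Iterating, no degree-$6$ configuration can protect $v$, forcing $D_v(v)=0$ and contradicting positive rank.

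The main difficulty is organizing the casework so that it is both finite and exhaustive: one must verify that for \emph{every} distribution surviving the normalization there is a chipless vertex from which either the fire burns everything outright or the forced post-firing redistribution does. I expect the cleanest organization is by the location of the heaviest-loaded vertices, treating separately the cases where the chips cluster on the central column pair $(2,2),(2,3)$ versus along a boundary or corner, and exploiting the fact that every vertex of $K_{3\times4}$ reaches the valence-$8$ core along at least three edge-disjoint short paths, so that only six chips can never simultaneously wall off all twelve vertices under repeated burning. Carrying out this enumeration is exactly the long, technical verification deferred to this appendix.
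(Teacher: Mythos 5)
Your setup matches the paper's strategy in outline---normalize to an effective divisor with $D(v)\le\val(v)-1$, pick a chipless vertex, run Dhar's burning algorithm, and derive a contradiction with positive rank---but the proof has a genuine gap: the entire content of the lemma is the verification you defer. The decisive step in your plan is the claim that when the fire stabilizes on a firewall $U$ of cut size $6$, ``set-firing $U$ then redistributes those chips, and re-running the burning process from $v$\ldots the fire now consumes the whole graph,'' and that iterating this protects no configuration. That assertion is precisely the statement $\gon(K_{3\times 4})\ge 7$ in disguise; it is not established by the observation that every vertex reaches the valence-$8$ core along three edge-disjoint paths (an edge-connectivity fact of exactly the kind that can only certify the bound $6$ you are trying to surpass, as your own firewall example $\{(1,1),(1,2)\}$ shows). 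The paper closes this gap with concrete structural lemmas: a $K_4$-burning criterion, the existence of a chipless vertex whose whole column burns, and column-to-column propagation statements (``if column $i$ burns then column $i\pm1$ burns''), each proved by a short case analysis of where the at most $6$ chips can sit. Nothing in your write-up substitutes for these steps; ``I would show'' and ``I expect the cleanest organization is\ldots'' mark exactly the places where the argument is missing.

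A second, smaller problem: your normalization is weaker than the one the paper uses. Besides $D(v)\le\val(v)-1$, the paper (citing the discussion after Lemma 16 of the C artesian-products paper) also assumes that no two \emph{adjacent} vertices both carry $\val-1$ chips, and this extra condition is invoked repeatedly to kill otherwise-stable configurations (e.g., a boundary vertex with $4$ chips adjacent to a corner with $2$ chips, which arises in the propagation analysis for the third column). Under your normalization alone, some of these configurations survive the static cut count and would have to be eliminated by tracking several rounds of set-firings, enlarging rather than taming the casework. Your alternative fallback---exhaustively enumerating $q_0$-reduced divisors up to the order-$4$ symmetry and reducing each at a suitable $q$---would in principle be a valid finite computation, but it is likewise not carried out here, so as written the lower bound is asserted rather than proved.
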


\begin{proof} We have $\gon(K_{3\times4})\leq 7$ by Theorem  \ref{appendix:kings}.  Suppose for the sake of contradiction that $\gon(K_{3\times4})\leq 6$.  Then there exists a positive rank divisor $D$ of degree $6$ on $K_{3\times4}$.  As presented in the discussion after \cite[Lemma 16]{aidun2019gonality}, we may assume that $D$ is effective, that $D$ places at most $\val(v)-1$ chips on each vertex, and that $D$ does not place $\val(v)-1$ chips and $\val(w)-1$ on any pair of adjacent vertices $v$ and $w$.

The remainder of our argument will consist of carefully choosing an unchipped vertex of $K_{3\times4}$ and running Dhar's burning algorithm from there.  We will argue that the whole graph burns, contradicting $D$ having positive rank.  We make the following observations for how fire will propagate throughout the graph.
\begin{itemize}
    \item[(1)] If four vertices forming a $K_4$ subgraph have together either at most $2$ chips, or three chips placed in a $2$-$1$ pattern, then once one vertex burns, the whole $K_4$ subgraph will burn.

    \item[(2)] There exists a vertex $v$ with no chips such that running Dhar's from $v$ will burn the whole column of $v$.

    Certainly if some column has no chips, this is true: pick $v$ in that column.  Otherwise, every column has at least one chip.  The distribution of chips among the four columns is thus either $1$, $1$, $1$, and $3$ or $1$, $1$, $2$, and $2$, in some order.  Either way we can find two adjacent columns, $C_i$ and $C_j$, such that $C_i$ has $1$ chip and $C_j$ has at most $2$ chips.  We may view $C_i\cup C_j$ as two copies of $K_4$, overlapping at the two vertices $v_{i,2}$ and $v_{j,2}$  If both copies of $K_4$ have $3$ chips, then $D=v_{i,2}+2v_{j,2}$; running Dhar's from any other vertex in $C_i\cup C_j$ burns all of $C_i\cup C_j$.  Thus we may assume one copy of $K_4$ has at most $2$ chips.  Choose $v$ to be an unchipped vertex on that $K_4$.  By observation (1), that copy of $K_4$ burns.  The remaining vertex in $C_i$ has at most $1$ chip and at least two burning edges, so it burns, giving us a burned column.

    \item[(3)] If the second column burns, then the first column will burn.  (Symmetrically if the second column burns, then the first column burns.)
    
    To see this, note that the first column has at most $6$ chips.  For all three vertices to remain unburned, $7$ chips are required.  For two vertices to remain unburned, either the two corner vertices need valence-many chips (a contradiction), or the middle and a corner need $4$ and $2$ chips, (also a contradiction, since no two adjacent vertices  have one less than valence-many chips).  And for one vertex to remain unburned, that vertex must have valence-many chips, again a contradiction.

    \item[(4)] If the first column burns, then the second column will burn.  (Symmetrically if the fourth column burns, then the third column burns.)

    There are at most $6$ chips in $C_2$, and the vertices have $2$, $3$, and $2$ incoming burning edges from $C_1$; thus at least one vertex in $C_2$ must immediately burn.  First we deal with the case that exactly one immediately burns.  If $v_{2,1}$ is the only one that immediately burns, then the divisor must be $4v_{2,2}+2v_{2,3}$.  The fire spreads to $C_3$, leading to the other two vertices burning.  A similar argument holds if $v_{2,3}$ immediately burns.  If $v_{2,2}$ is the only that immediately burns, then the divisor must be $3v_{2,1}+3v_{2,3}$.  The fire spreads to $C_3, $ leading to the other two vertices burning. In all cases, $C_2$ burns.

    Now we deal with the case that exactly two immediately burn.  If $v_{2,1}$ and $v_{2,2}$ immediately burn, then $v_{2,3}$ must have at least $3$ chips, and also at most $4$ chips.  It follows that $C_3$ contains at most $3$ chips.  Consider the copy of $K_4$ made up of $v_{2,1}, v_{2,2}, v_{3,1}$, and $v_{3,2}$.  If the whole $K_4$ burns, then for $v_{2,3}$ to stay unburned then so too must $v_{3,3}$.  This means that $D=4v_{2,3}+2v_{3,3}$.  However, the fire spreads to $C_4$ and from there burns $v_{3,3}$, and from there $v_{2,3}$. so all of $C_3$ burns.  If that $K_4$ does not burn, then either $v_{3,1}$ or $v_{3,2}$ has three chips.  This forces either $D=3v_{2,3}+3v_{3,1}$ or $D=3v_{2,3}+3v_{3,2}$; in both cases $v_{3,3}$ burns, and from there $v_{2,3}$ burns.  A similar argument holds if $v_{2,2}$ and $v_{2,3}$ immediately burn.  Finally, if $v_{2,1}$ and $v_{2,3}$ immediately burn, then $v_{2,2}$ must have at least $5$ chips.  This means $C_3$ has at most one chip, and since all its vertices neighbor either $v_{2,1}$ or $v_{2,3}$, at least two vertices in $C_3$ burn.  This means $7$ vertices neighboring $v_{2,2}$ burn, and since $\deg(D)=6$, we have that $v_{2,2}$ must burn, completing the column.

    \item[(5)] If the second column burns, then the third column will burn.  (Symmetrically if the third column burns, then the second column burns.)
    
    The argument is nearly identical that of claim (4), with the column of each vertex shifted right by one.  The one difference comes in when we find that $D=4v_{3,3}+2v_{4,3}$; here we reach a contradiction, since no two adjacent vertices can have valence-minus-one chips on them.
\end{itemize}
Taking together, we quickly have a proof:  choose a vertex $v$ satisfying (2).  Observations (3), (4), and (5) guarantee that once a column burns, so do all adjacent columns, meaning that running Dhar's burning algorithm starting at $v$ burns the entire graph.  Since $v$ has no chips, this is a contradiction to $D$ being a positive rank divisor.  We conclude that $\gon(K_{3\times 4})\geq 7$, and thus $\gon(K_{3\times 4})=7$.
\end{proof}

\section{Computations for knight's graphs}\label{appendix:knights}

\begin{proof}[Proof of Theorem \ref{theorem:knights_3_and_4_and_5}(i)] First assume that $n$ is odd. Since $n>3$, we know $N_{3\times n}$ is connected.  Consider the divisor $D$ that places two chips on every other column, starting with the second, on its top and bottom vertices.  This divisor has degree $2\lfloor n/2\rfloor$, which since $n$ is odd is equal to $2\lfloor (n-1)/2\rfloor$.  We will show that this divisor has positive rank.

\begin{figure}
    \centering
    \includegraphics{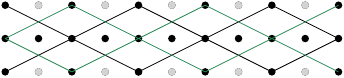}
    \caption{The divisor $D$ for $N_{3\times n}$ with $n$ odd, which places one chip on each gray vertex; the components of $G-\textrm{supp}(D)$ are illustrated.}
    \label{figure:3xn_odd_case}
\end{figure}

In Figure \ref{figure:3xn_odd_case} we illustrate the placement of chips from $D$ as gray vertices, with the connected components of $G-\textrm{supp}(D)$ illustrated as well.  Some of these connected components are isolated vertices $v$; chips can be moved to them by set-firing $V(G)-\{v\}$.  There are two other components, consisting of $4$-cycles glued at vertices.  Focus on one such component.  Firing the complement of the component moves $1$ chips to each of its vertices in the top and bottom rows.  At this point every vertex from that component in the middle row has a chip on each neighbor; firing the complement of that vertex moves a chip there.  A symmetric argument shows we may move a chip to any vertex in the other large component.  Taken together, a chip may be moved to any vertex, so our divisor has positive rank as desired.

Now consider $n$ even.  The case of $n=4$ is handled by Proposition \ref{prop:knights_exact_3_by_small} (proved later in this appendix), so we consider $n\geq 6$.  We use a similar chip placement to the odd case, in that we choose a set of columns and place two chips on each, one on the top vertex and one on the bottom vertex.  However, the columns we choose are the $3^{rd}$, the $4^{th}$, and the $k^{th}$ for any odd $k\geq 7$.  This divisor $D$ has degree $2\lfloor (n-1)/2\rfloor$ since $n\geq 6$.

\begin{figure}
    \centering
    \includegraphics{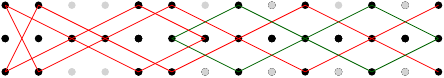}\quad\quad\includegraphics{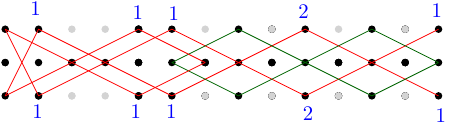}
    \caption{On the left, the divisor $D$ for $N_{3\times n}$ with $n$ even, which places one chip on each gray vertex; the components of $G-\textrm{supp}(D)$ are illustrated. On the right, the equivalent divisor obtained by firing the complement of the larger component of $G-\textrm{supp}(D)$.}
    \label{figure:3xn_even_case_knights}
\end{figure}

We represent on the left in Figure \ref{figure:3xn_even_case_knights} the divisor as grayed vertices, with the connected components of the remaining vertices illustrated.  Moving chips to most vertices works largely as in the odd case: isolated vertex components are easily handled, as is the smaller of the two larger components.  For the last component, firing its complement places chips on all its vertices except those in the middle row, and except for the top and bottom vertex in the rightmost row; this is illustrated in the right on Figure \ref{figure:3xn_even_case_knights}.  Every vertex not yet chipped either has all neighbors with a chip, and so can be dealt with by firing its complement; or it is one of $(1,1)$, $(3,1)$, or $(3,2)$, and firing the complement of this set places chips on all those vertices.
\end{proof}

\begin{proof}[Proof of Theorem \ref{theorem:knights_3_and_4_and_5}(ii)]  Consider the following divisor $D$ on the $4\times n$ knight's graph.  Choose one vertex from each column, alternating between the second and third rows.  Place $1$ chip on that vertex if it is in the two leftmost or two rightmost columns; otherwise place $2$ chips on it. Note that $\deg(D)=2n-4$.  We will argue that $D$ has positive rank.

\begin{figure}[hbt]
    \centering
\includegraphics{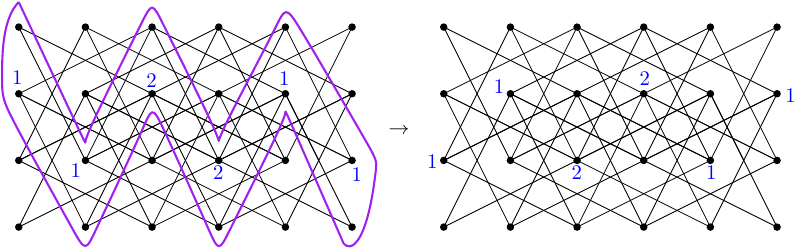}
    \caption{A divisor $D$ on $N_{4\times n}$ with a set-firing move to yield an equivalent divisor $D'$.}
    \label{figure:4xn_positive_rank}
\end{figure}

Illustrated in Figure \ref{figure:4xn_positive_rank} is the divisor $D$, as well as a set-firing move transforming it into another divisor $D'$.  Any vertex in the second or third row has a chip placed on it by $D$ or $D'$.  For every vertex $v$ in the first or fourth row, either $D$ or $D'$ places a chip on all of its neighbors.  Starting from that divisor and set-firing $V(G)-\{v\}$ places a chip on $v$.  Having argued that we may place a chip on any vertex, we conclude that $D$ has positive rank.
\end{proof}

\begin{proof}[Proof of Theorem \ref{theorem:knights_3_and_4_and_5}(iii)]  Consider the following divisor $D$ on the $4\times n$ knight's graph.  Place a chip on every vertex in the third row.  Then place one more chip in each column, either in the second or fourth row, in groups of four (so the first four columns have a chip in the second row, the next four columns have a chip in the fourth row, and so on).  Finally, on every fifth row, place one more chip in either the second or fourth row, whichever does not already have a chip. This divisor has degree $2n+\lfloor n/5\rfloor$.  We will argue it has positive rank.

\begin{figure}[hbt]
 \centering\includegraphics{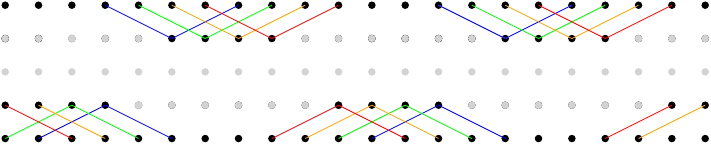}
    \caption{A divisor on $N_{5\times n}$ (one chip on each gray vertex), and the connected components of $G$ with the chipped vertices deleted.}
    \label{figure:5xn_positive_rank}
\end{figure}

We represent the divisor $D$ in Figure \ref{figure:5xn_positive_rank} with gray vertices, and illustrated the connected components of $G-\textrm{supp}(D)$.  Note that these connected components are all trees, and that each vertex in $\textrm{supp}{D}$ has at most one neighbor in each tree.  It follows from the argument in \cite[Theorem 3.33]{db} that $D$ has positive rank (indeed, firing the complement of one of the trees moves chips onto all vertices of that tree).
\end{proof}

\begin{proof}[Proof of Proposition \ref{prop:knights_exact_3_by_small}]
    The graph $N_{3\times 3}$ is disconnected, with a cycle on eight vertices and an isolated vertex as its two components.  By Lemma \ref{lemma:common_graph_gonalities}, these components have gonality $2$ and gonality $1$, respectively, so $\gon(N_{3\times 3})=2+1=3$.

The graph $N_{3\times 4}$ is illustrated with two different pictures in Figure \ref{figure:knights_3by4}; we focus on the second rendering, which appears as a $3\times 4$ grid graph with edges missing from the middle row.   A divisor $D$ of degree $2$ is illustrated on the right in Figure \ref{figure:knights_3by4}. This divisor has positive rank: by firing the first column in this grid interpretation, then the first two columns, and so on, we may move two chips to the top and bottom of any column.  To place a chip on a vertex $v$ in the middle row, first move chips to its two neighbors, then set-fire the complement of $v$.  This gives $\gon(N_{3\times 4})\leq \deg(D)=2$.  Since $N_{3\times 4}$ is not a tree, we know $\gon(N_{3\times 4})\geq 2$ by Lemma \ref{lemma:common_graph_gonalities}, so  $\gon(N_{3\times 4})=2$.

\begin{figure}
    \centering
\includegraphics{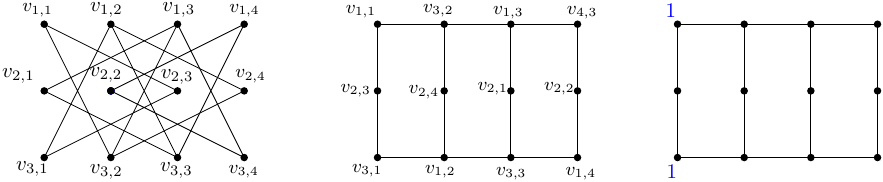}
    \caption{Two depictions of the $3\times 4$ knight's graph, with a positive rank divisor.}
    \label{figure:knights_3by4}
\end{figure}

For $N_{3\times 5}$ and $N_{3\times 6}$, an upper bound of $4$ on gonality comes from Theorem \ref{theorem:knights_3_and_4_and_5}.  For a lower bound we will use scramble number.

The graph $N_{3\times 5}$ is illustrated in its usual form on the left in Figure \ref{figure:knights_3by5}, with an alternate drawing in the middle.  Since scramble number is invariant under ``smoothing over'' $2$-valent vertices \cite[Proposition 4.6]{new_lower_bound}, the scramble number of $N_{3\times 5}$ is equal to the scramble number of the graph $G$ illustrated on the right in Figure \ref{figure:knights_3by5}.  That same figure presents a scramble $\mathcal{S}$ on $G$, which we claim has order $4$.  Certainly $h(\mathcal{S})=4$, since there are four disjoint eggs.  Between any pair of eggs we can find four pairwise edge-disjoint paths, so $e(\mathcal{S})\geq 4$.  Thus $||\mathcal{S}||\geq 4$.  This gives us
\[4=||\mathcal{S}||\leq \sn(G)=\sn(N_{3\times 5})\leq \gon(3\times 5)\leq 4,\]
allowing us to conclude that $\sn(N_{3\times 5})=\gon(N_{3\times 5})=4$.

\begin{figure}[hbt]
    \centering
\includegraphics{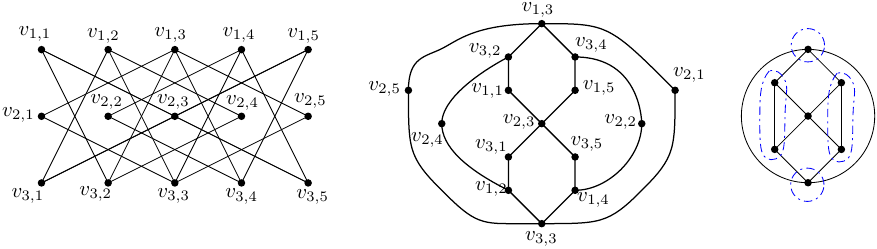}
    \caption{Two drawings of the $3\times 5$ knight's graph, and the graph obtained by smoothing over $2$-valent vertices, with a scramble.}
    \label{figure:knights_3by5}
\end{figure}

For the $3\times 6$ knight's graph, we note that $N_{3\times 5}$ is a subgraph of $N_{3\times 6}$.  Since scramble number is monotone under taking subgraphs \cite[Proposition 4.5]{new_lower_bound}, we have
\[4=\sn(N_{3\times 5})\leq \sn(N_{3\times 6})\leq \gon(N_{3\times 6})\leq 4,\]
so $\sn(N_{3\times 6})=\gon(N_{3\times 6})=4$.
\end{proof}

\end{document}